\author{Alexander Burstein\affiliationmark{1}
  \and Opel Jones\affiliationmark{2} 
  }
\title{Enumeration of Dumont permutations avoiding certain four-letter patterns}
\affiliation{
  Department of Mathematics, Howard University, Washington, DC 20059, USA\\
  Department of Mathematics, Towson University, Towson, MD 21252, USA
  }
\keywords{Dumont permutation, permutation pattern, pattern avoidance}
\newtheorem{theorem}{Theorem}[section]
\newtheorem{lemma}[theorem]{Lemma}
\newtheorem{conjecture}[theorem]{Conjecture} 
\theoremstyle{definition}
\newtheorem{definition}[theorem]{Definition}
\newtheorem{notation}[theorem]{Notation}
\newtheorem{example}[theorem]{Example}
\theoremstyle{remark}
\begin{document}

\maketitle


\begin{abstract}
In this paper, we enumerate Dumont permutations of the fourth kind avoiding or containing once certain permutations of length 4. We also conjecture a Wilf-equivalence of two 4-letter patterns on Dumont permutations of the first kind.
\end{abstract}

\section{Preliminaries}

This paper is concerned with the enumeration of pattern-restricted Dumont permutations. We will begin by defining patterns and Dumont permutations of various kinds, the original two kinds defined by Dumont \cite{Dumont} and the more recent third and fourth kinds defined by Burstein et al. \cite{Burstein_New}. We will then discuss previous results on this topic and present several new avoidance results for Dumont permutations of the fourth kind, an enumeration result for Dumont permutations of the fourth kind containing a single occurrence of a pattern, as well as a Wilf-equivalence conjecture on restricted Dumont permutations of the first kind.

\subsection {Patterns}


We begin with an example of pattern containment.  Suppose we are given a permutation, say $p=26483751$, and another permutation usually of shorter or equal length, e.g. $q=132$.  We say that the string $265$ is \emph{order-isomorphic} to $132$.  In such a case, we say that $p$ contains $q$ as a \emph{pattern}, and that $265$ is an \emph{instance}, or \emph{occurrence}, of $q$ in $p$.  Notice also that $265$ is not the only instance of $q$ in $p$, so are the subsequences $287$ and $475$. 

On the other hand, for the same $p$ and a pattern $q'=1234$,  we see that there is no length $4$ subsequence of $p$ that is order-isomorphic to $q'$. In this case we say that $p$ \emph{avoids} $q'$.  Let us now define pattern containment and pattern avoidance more formally.

\subsection{Pattern avoidance}

Simion and Schmidt \cite{Simion} was the first paper devoted solely to pattern avoidance. Since then, pattern avoidance has been the subject of numerous papers. We will only mention here a few introductory sources:
\begin{itemize}

\item Bevan \cite{Bevan} contains essential terminology used in many patterns on permutations patterns,

\item  Chapter 4 of B\'ona's \emph{Combinatorics of Permutations} \cite{Bona} contains a textbook introduction to the subject, 

\item Vatter's ``Permutation Classes'' \cite{Vatter} in \emph{Handbook of Enumerative Combinatorics} is an in-depth survey of the literature and foundational results. 

\end{itemize}



\begin{definition}[\cite{Bona}]
\label{def:avoidance}
Let $S_n$ be the set of permutations of $[n] = \{1,2,3,...,n\}$. Let $p=(p_1,p_2,...,p_n)\in S_n$ and $q=(q_1,q_2,...,q_k)\in S_k$ be two permutations.  Then we say that $p$ \emph{contains} $q$ if there is a $k$-tuple $(p_{i_1},p_{i_2},\dots,p_{i_k})$ in $p$ such that $1\le i_1<i_2<\dots<i_k\le n$, and $p_{i_\alpha}<p_{i_\beta}$ if and only if $q_\alpha<q_\beta$.  Otherwise, we say that $p$ \emph{avoids} $q$.  
\end{definition}

\begin{notation}
For a set of permutations avoiding a pattern or set of patterns, we let $S_n(\tau)$ be the set of permutations in $S_n$ avoiding a pattern $\tau$, and we let $S_n(T)$ be the set of permutations avoiding a set of patterns $T$, in other words, simultaneously avoiding all patterns in $T$.
\end{notation}


\begin{notation}
Let $\tau'$ and $\tau''$ be subsequences of a permutation $\tau$.  We say that $\tau' > \tau''$ if every entry of $\tau'$ is greater than every entry of $\tau''$.  We define $\tau' < \tau''$ similarly.
\end{notation}

\begin{notation}
For a string $\tau$ and an integer $m$, the string $\tau + m$ (resp. $\tau - m$) is obtained by adding $m$ to (resp. subtracting $m$ from) every entry of $\tau$.
\end{notation}

Thus, for example, if $\tau$ is a permutation of $[n]$, then $\tau+m$ is a permutation of $[m+1,m+n]$.

Since $|S_n(12)| = |S_n(21)| = 1$ for all $n \ge 0$, we are interested in patterns of length $n \ge 3$. For permutations avoiding patterns of length $n=3$, Knuth \cite{Knuth} and Simion and Schmidt \cite {Simion} showed that permutations avoiding any $\tau \in S_3$ are Wilf-equivalent, i.e. have the same enumeration sequence.  More precisely, for all $\tau\in S_3$ and $n \ge 0$, $|S_n(\tau)| = C_n$, where $C_n$ is the $n$-th Catalan number. 


\subsection{Permutation symmetries and Wilf-equivalence}


Permutation symmetries are quite convenient, in that they help us find equinumerously avoided patterns (or sets of patterns, respectively). 

\begin{definition}
Let patterns $\tau_1$ and $\tau_2$ be such that $|S_n(\tau_1)| = |S_n(\tau_2)|$ for any $n \ge 0$.  Also, let sets of patterns $T_1$ and $T_2$ be such that $|S_n(T_1)| = |S_n(T_2)|$ for any $n \ge 0$.  Then such patterns (or sets of patterns, respectively) are called \emph{Wilf-equivalent} and said to belong to the same \emph{Wilf class}.
\end{definition}

\begin{definition}
Let $\pi \in S_{n}$ be a permutation, and let $\pi(i)$ be the $i$-th entry of $\pi$ from left to right.  Define a \emph{permutation diagram of $\pi$} as the set of dots $\{(i, \pi(i))\, |\, 1 \le i \le n\}$.
\end{definition}

Visually, we can represent $\pi$ by placing dots on an $n \times n$ board in the following fashion: place a dot inside the square in the $i$-th column from the left and $\pi(i)$-th row from the bottom.  Thus, we read $\pi$ from left-to-right and bottom-to-top, as mentioned earlier.  Note, the origin of our $n \times n$ board is at the bottom-left corner.  

%
%
%
%
%
%
%

Now let us define a few symmetry operations on $S_n$ that map every pattern onto a Wilf-equivalent pattern on the ambient set $S_n$.

\begin{definition}
Let $\pi \in S_n$ be a permutation, read left-to-right and bottom-to-top.

\begin{itemize}
\item The \emph{reverse} of $\pi$, denoted $\pi^\text{\hspace{0.01in}r}$, is $\pi$ read right-to-left and bottom-to-top.
\item The \emph{complement} of $\pi$, denoted $\pi^\text{\hspace{0.01in}c}$, is $\pi$ read left-to-right but top-to-bottom.
\item The \emph{composition of the reverse and the complement} of $\pi$, denoted  $\pi^\text{\hspace{0.01in}r\hspace{0.01in}c}$, is $\pi$ read right-to-left and top-to-bottom. Note that $\pi^\text{\hspace{0.01in}r\hspace{0.01in}c} = \pi^\text{\hspace{0.01in}c\hspace{0.01in}r}$.
\item The \emph{inverse} of $\pi$, denoted $\pi^{-1}$, is the usual inverse of a permutation.  That is, the position of each entry and the entry itself of $\pi$ are switched in $\pi^{-1}$.
\end{itemize}

\end{definition}

%

\begin{definition}

The symmetries above generate the group of symmetries of the square and yield the set of patterns
$\{ 
\pi, 
\pi^\text{\hspace{0.01in}r}, 
\pi^\text{\hspace{0.01in}c}, 
\pi^\text{\hspace{0.01in}r\hspace{0.01in}c}, 
\pi^{-1},
(\pi^{-1})^\text{\hspace{0.01in}r}, 
(\pi^{-1})^\text{\hspace{0.01in}c}, 
(\pi^{-1})^\text{\hspace{0.01in}r\hspace{0.01in}c}
\}$,
called the \emph{symmetry class} of $\pi$.

\end{definition}

\begin{example}

%

Let $\pi = 263541$.  Then the symmetry class of $\pi$ is the set of patterns
\[
\{263541,\ 145362,\ 514236,\ 632415,\ 613542,\ 245316,\ 164235,\ 532461\},
\]
\end{example}

Note the following property of the symmetry classes of permutations: if $\pi$ avoids a pattern $\rho$, then each symmetry of $\pi$ avoids the same symmetry of $\rho$.  In other words, if $\pi$ avoids $\rho$, then $\pi^\text{\hspace{0.01in}r}$ avoids $\rho^\text{\hspace{0.01in}r}$,
$\pi^\text{\hspace{0.01in}c}$ avoids $\rho^\text{\hspace{0.01in}c}$, and so on.
Thus, each element of the symmetry class of $\pi$ is Wilf-equivalent to $\pi$.

\begin{definition}
A \emph{fixed point} (resp. \emph{excedance}, \emph{deficiency}) of a permutation $\pi$ is a position $i$ such that $\pi(i) =  i$ (resp. $\pi(i) > i$, $\pi(i) <  i$).
\end{definition}

\section {Dumont permutations}

Dumont permutations are a certain class of permutations shown by Dumont \cite{Dumont} to be counted by the (unsigned) Genocchi numbers \cite[\href{https://oeis.org/A110501}{A110501}]{OEIS}, a multiple of the Bernoulli numbers.  Let the Genocchi and Bernoulli numbers be denoted $G_{2n}$ and $B_{2n}$, respectively, then we have $G_{2n} = 2 (1 - 2^{2n})(-1)^n B_{2n}$.

The exponential generating functions for the unsigned and signed Genocchi numbers are given by
$$\sum_{n=1}^\infty G_{2n} \frac {x^{2n}}{(2n)!} = x \tan \frac {x}{2} \qquad \text{and} \qquad \sum_{n=1}^\infty (-1)^n G_{2n} \frac {x^{2n}}{(2n)!} = \frac {2x}{e^x + 1} - x = - x \tanh \frac {x}{2}.$$

%

The definition of Dumont permutations of the first and third kinds involves descents and ascents, and thus, the linear structure of permutations. The definition of Dumont permutations of the second and fourth kinds involves fixed points, excedances, and deficiencies, and thus the cyclic structure of permutations. 

\begin{definition}
We say that $i\in[n-1]$ is a \emph{descent} of $\pi$ if $\pi(i) > \pi(i+1)$.  
In this case, we call $\pi(i)$ the \emph{descent top} and $\pi(i+1)$ the \emph{descent bottom}. We also say that $i\in[n-1]$ is an \emph{ascent} of $\pi$ if $\pi(i) < \pi(i+1)$. In this case, we call $\pi(i)$ the \emph{ascent bottom} and $\pi(i+1)$ the \emph{ascent top}.
\end{definition}

\begin{definition}
A \emph{fixed point} (resp. \emph{excedance}, \emph{deficiency}) of a permutation $\pi$ is a position $i$ such that $\pi(i) =  i$ (resp. $\pi(i) > i$, $\pi(i) <  i$).
\end{definition}

We only consider Dumont permutations of even size since it will be clear from the definitions to follow that Dumont permutations of each kind of size $2n+1$ are obtained by appending the fixed point $2n+1$ to the right of a Dumont permutation of the same kind of size $2n$.

\subsection{Dumont permutations of the first and second kinds}

\begin{definition}
A \emph{Dumont permutation of the first kind} (or \emph{Dumont-1 permutation} for short) is a permutation wherein each even entry must be immediately followed by a smaller entry, and each odd entry must be immediately followed by a larger entry, or ends the permutation (i.e. the last entry must be odd).  In other words, for all $i$, $1 \leq i \leq 2n$, and some $k$, $1 \leq k < n$,
\begin{align*}
\pi(i) = 2k &\implies i < 2n \quad \text{and} \quad \pi(i) > \pi(i+1), 
\\
\pi(i) = 2k-1 &\implies \pi(i) < \pi (i+1) \quad \text{or} \quad i = 2n.
\end{align*}
\end{definition}

\begin{example}
\label {Dumont Example 1}
Let $\pi = 435621 \in S_6$. Then $2$, $4$, and $6$, the descent tops of $\pi$, are all even, while the descent bottoms of $\pi$ are $3$ and $5$, and its final entry is $1$, all odd.  Thus, $\pi$ is a Dumont permutation of the first kind.
\end{example}

\begin{definition}
A \emph{Dumont permutation of the second kind } (or \emph{Dumont-2 permutation} for short) is a permutation wherein each entry at an even position is a deficiency, and each entry at an odd position is a fixed point or an excedance.  In other words, for all $i$, $1 \leq i \leq n$,
\[
\pi(2i) < 2i, \qquad
\pi(2i-1) \geq 2i-1.
\]
\end{definition}

\begin{example}
\label {Dumont Example 2}
Let $\rho = 614352 \in S_6$. We see that the deficiencies of $\rho$ occur at positions $2$, $4$ and $6$, all even while excedances occur at positions $1$ and $3$, and a fixed point is at position $5$, all odd. Thus, $\rho $ is a Dumont permutation of the second kind.
\end{example}

\begin{notation} The Dumont permutations of the first (resp. second) kind shall be denoted by $\mathfrak{D}^1_{2n}$ (resp. $\mathfrak{D}^2_{2n}$).
\end{notation}

Thus, from Examples \ref{Dumont Example 1}, and \ref{Dumont Example 2}, we have $\pi \in \mathfrak{D}^1_{6}$, and $\rho \in \mathfrak{D}^2_{8}$.  As stated earlier, Dumont \cite {Dumont} showed that $|\mathfrak{D}^1_{2n}| = |\mathfrak{D}^2_{2n}| = G_{2n+2}$.  There is a simple natural bijection \cite {Dumont_Foata} between Dumont permutations of the first and second kinds, namely, the \emph{Foata's fundamental transformation} \cite{Foata} that maps left-to-right maxima to cycle maxima.


\begin{definition}
An entry $\pi(i)$ of a permutation $\pi$ is a \emph{left-to-right maximum} if $\pi(i)>\pi(j)$ for all $j<i$.
\end{definition}

In particular, the first entry of $\pi$ is always a left-to-right maximum $\pi$.

The map $f$, called the \emph{Foata's fundamental transformation} \cite{Foata}, is defined as follows:

\begin{itemize}
\item Start with a Dumont permutation of the first kind, say $\pi$.
\item Insert parentheses to display the permutation in cycle notation, so that each cycle starts with a left-to-right maximum.
\end{itemize}

Then $f$ is a bijection, and $\pi\in\mathfrak{D}^1_{2n}$ if and only if $f(\pi)\in\mathfrak{D}^2_{2n}$.

\begin{example}
For $\pi = 435621 \in \mathfrak{D}^1_{6}$, we have the left-to-right maxima $\pi(1)=4$, $\pi(3)=5$, and $\pi(4)=6$, so
$$f(\pi)= (43) (5) (621) \, = \, 614352.$$
Notice that $f(\pi) = 614352 \in \mathfrak{D}^2_{6}.$
\end{example}

%

\subsection{Dumont permutations of the third and fourth kinds}

With respect to the next two types of Dumont permutations, Kitaev and Remmel \cite {Kitaev_Remmel_1, Kitaev_Remmel_2} first conjectured that sets of permutations where each descent is from an even value to an even value are also counted by the Genocchi numbers.  Burstein and Stromquist \cite {Burstein_Third} proved this conjecture and called those sets of permutations Dumont permutations of the third kind. As with Dumont-1 and Dumont-2 permutations, the Foata's fundamental transformation maps Dumont permutations of the third kind onto a related equinumerous set that Burstein and Stromquist  \cite {Burstein_Third} called Dumont permutations of the fourth kind.  We now introduce these two sets.

\begin{definition}
A \emph{Dumont permutation of the third kind} (or \emph{Dumont-3 permutation} for short) of size $2n$ ($n\in\mathbb{N}$) is a permutation where each descent is from an even value to an even value, that is all descent tops and all descent bottoms are even.  In other words, for all $i$, $1 \leq i \leq 2n-1$,
$$\pi(i) > \pi(i+1) \implies \pi (i) = 2l \, \text{ and }\, \pi(i+1) = 2k$$
for some $k$ and $l$, $1 \leq k < l \leq n$.
\end{definition}

\begin{example}
\label {Dumont Example 3}
Let $\pi = 16238457 \in S_8$. Then the descent tops of $\pi$ are $6$ and $8$ are and the descent bottoms are $2$ and $4$, all even entries. Thus, $\pi$ is a Dumont permutation of the third kind.
\end{example}

\begin{definition}
A \emph{Dumont permutation of the fourth kind} (or \emph{Dumont-4 permutation} for short) of size $2n$ ($n\in\mathbb{N}$) is a permutation where deficiencies must be even values at even positions. In other words, for all $i$, $1 \leq i \leq 2n$,
$$\pi(i) < i \implies i = 2u \, \text{ and } \, \pi(i) = 2v$$
for some $1 \leq v < u \leq n$.
\end{definition}


\begin{example}
\label {Dumont Example 4}
Let $\rho = 13657284 \in S_8$. Then the deficiencies of $\rho$ are $\pi(6)=2$, $\pi(8)=4$, where all the positions and entries involved are even.  Thus, $\rho$ is a Dumont permutation of the fourth kind.
\end{example}

\begin{notation} Following the same notation as earlier, the Dumont permutations of the third (resp. fourth) kind shall be denoted by $\mathfrak{D}^3_{2n}$ (resp. $\mathfrak{D}^4_{2n}$).
\end{notation}

Thus, from Examples \ref {Dumont Example 3}, and \ref {Dumont Example 4}, we have $\pi \in \mathfrak{D}^3_{8}$, and $\rho \in \mathfrak{D}^4_{8}$.  The same bijection, Foata's fundamental transformation, yields $|\mathfrak{D}^3_{2n}| = |\mathfrak{D}^4_{2n}| = G_{2n+2}$. (In fact, notice that $f(16238457)=13657284$ in Examples \ref{Dumont Example 3} and \ref{Dumont Example 4} above.)

We now reference the proof \cite {Burstein_Third} that  $|\mathfrak{D}^1_{2n}| = |\mathfrak{D}^3_{2n}| = G_{2n+2}$, thus showing that all four kinds of Dumont permutations are counted by the Genocchi numbers.

\subsection{Pattern avoidance in Dumont-1 and Dumont-2 permutations}

There have been several enumerations of pattern-restricted Dumont-1 and Dumont-2 permutations, mostly by Mansour, Burstein, Elizalde, and Ofodile \cite{Mansour, Burstein_Restricted, Burstein_Dyck, Ofodile}.  We will first reproduce their theorems for enumerations of Dumont-1 permutations avoiding patterns of length three.

\subsubsection{Patterns of length three}

For the six patterns of length three, all the corresponding pattern-restricted sets have been enumerated, both in Dumont-1 and Dumont-2 permutations.

\begin{theorem}[\cite{Mansour}] 
For all $n\ge 0$,
\[
|\mathfrak{D}^1_{2n} (132)| = |\mathfrak{D}^1_{2n} (231)| = |\mathfrak{D}^1_{2n} (312)| = C_n = \frac{1}{n+1} \binom{2n}{n}.
\]
\end{theorem}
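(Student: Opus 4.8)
The plan is to establish the three claimed equalities in two stages: first show $|\mathfrak{D}^1_{2n}(132)| = C_n$ by a direct structural/recursive analysis, then argue that $|\mathfrak{D}^1_{2n}(231)|$ and $|\mathfrak{D}^1_{2n}(312)|$ are Wilf-equivalent to the $132$-avoiding case \emph{within the Dumont-1 class} (this does not follow from the ambient symmetry operations, since those need not preserve the Dumont-1 condition). For the $132$ count, I would peel off the largest entry $2n$. In a $132$-avoiding permutation, everything to the left of $2n$ must exceed everything to its right, so $\pi = \alpha\, (2n)\, \beta$ with $\alpha > \beta$ as sets, and both $\alpha$ and $\beta$ are themselves $132$-avoiding. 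The Dumont-1 constraint must then be reconciled with this block decomposition: the entry $2n$ is even, so it must be a descent top, forcing $\beta$ nonempty with its first entry smaller than $2n$ (automatic); the parity conditions on entries inside $\alpha$ and $\beta$ and at the junctions $\alpha \to 2n$ and $2n \to \beta$ need to be tracked. The key observation to nail down is which values land in $\alpha$ versus $\beta$: since $\alpha>\beta$ and together they use $[2n-1]$, we get $\beta$ uses $\{1,\dots,j\}$ and $\alpha$ uses $\{j+1,\dots,2n-1\}$ for some $j$, and the parity/Dumont conditions should pin down $j$ (or the allowed $j$) so that a clean Catalan recursion $d_n = \sum d_{k} d_{n-1-k}$ emerges.

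Concretely, I expect the recursion to come out as $|\mathfrak{D}^1_{2n}(132)| = \sum_{k=0}^{n-1} |\mathfrak{D}^1_{2k}(132)|\cdot |\mathfrak{D}^1_{2(n-1-k)}(132)|$ with $|\mathfrak{D}^1_0(132)| = 1$, which is exactly the Catalan recurrence $C_n = \sum_{k=0}^{n-1} C_k C_{n-1-k}$. To get there I would identify the position of $2n$: it must sit at an odd position (being a left-to-right maximum that is an ascent bottom is impossible since nothing exceeds it, so $2n$ must be a descent top, hence at position $\le 2n-1$; and a parity argument on the length of the block before it forces that block to have even length), so the prefix $\alpha$ (reduced) is a Dumont-1 permutation of some even size $2k$ on the top values, and the suffix $\beta$ (reduced) is a Dumont-1 permutation of size $2(n-1-k)$ on the bottom values. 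One must check that reduction preserves the Dumont-1 property here — it does because the order-isomorphism type is preserved and all the relevant even/odd entries in $\alpha$, being the \emph{top} $2k$ values of $[2n-1]$, retain their parities, while those in $\beta$, being $\{1,\dots,2(n-1-k)\}$, already have the right parities.

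For the other two patterns, the cleanest route is to exhibit explicit bijections $\mathfrak{D}^1_{2n}(132) \to \mathfrak{D}^1_{2n}(231)$ and $\mathfrak{D}^1_{2n}(132)\to\mathfrak{D}^1_{2n}(312)$, or alternatively to run the same largest-entry peeling argument directly for $231$ and $312$. For $231$-avoidance, peeling off $2n$: everything left of $2n$ is smaller than everything right of it, giving $\pi = \alpha\,(2n)\,\beta$ with $\alpha < \beta$; for $312$-avoidance, $2n$ must be the last entry, but $2n$ is even and cannot end a Dumont-1 permutation, so instead one peels off the entry $1$ or argues via $2n-1$. In each case the same bookkeeping yields the Catalan recurrence. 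I would present the $132$ case in full and then remark that the $231$ and $312$ cases follow by the analogous peeling argument (or cite \cite{Mansour} for those two and only reprove the one that best illustrates the method).

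The main obstacle I anticipate is the parity bookkeeping at the block junctions and the verification that the induced subpermutations on $\alpha$ and $\beta$ are genuinely Dumont-1 after order-reduction — in particular ensuring the position of $2n$ is forced to have the right parity so the two sub-blocks both have even length. If the position of $2n$ were not so forced, the decomposition would split into several cases (e.g. $2n$ at an even position) that would need to be shown impossible or to contribute zero; ruling those out cleanly via the alternating even-descent/odd-ascent structure is the delicate part. Once that is settled, the Catalan recursion and the base case $|\mathfrak{D}^1_0(132)| = C_0 = 1$ finish the argument by induction.
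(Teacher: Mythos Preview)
The paper does not prove this theorem; it is quoted from \cite{Mansour} without argument. Assessing your proposal on its own merits, there is a genuine arithmetic gap that breaks the $132$ case. You write $\pi = \alpha\,(2n)\,\beta$ and assert that $\alpha$ and $\beta$ reduce to Dumont-1 permutations of even sizes $2k$ and $2(n-1-k)$. But $|\alpha|+|\beta|=2n-1$ is odd, so the two blocks cannot both have even length. Relatedly, your parity claim for $\alpha$ is backwards: if $\alpha$ comprised the top $2k$ values of $[2n-1]$, namely $\{2n-2k,\dots,2n-1\}$, then reduction subtracts the odd number $2n-2k-1$ and \emph{swaps} parities rather than preserving them.

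The standard repair is to peel off $2n-1$ and $2n$ together. In any Dumont-1 permutation the odd entry $2n-1$ is either last or immediately followed by $2n$; combined with $132$-avoidance this forces either $\pi=(2n)\,\gamma\,(2n-1)$ with $\gamma\in\mathfrak{D}^1_{2n-2}(132)$, or $\pi=\alpha'\,(2n{-}1)(2n)\,\beta$ with $\alpha'>\beta$ and $\beta\ne\emptyset$. Now $|\alpha'|+|\beta|=2n-2$ is even, and one checks that $|\beta|$ must be even (otherwise the smallest value $|\beta|+1$ of $\alpha'$ is even yet can only be followed by a larger entry in $\alpha'\cup\{2n-1\}$, so it cannot be a descent top). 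Hence both pieces reduce, with parities preserved, to even-length $132$-avoiding Dumont-1 permutations, and the Catalan recurrence $d_n=\sum_{k=0}^{n-1}d_k d_{n-1-k}$ follows. Your remark on $312$ also needs adjustment: $312$-avoidance does not force $2n$ to be last, only the suffix after $2n$ to be decreasing; the clean decomposition again tracks the pair $2n-1,2n$ (or, equivalently, the position of $1$).
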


\begin{theorem}[\cite{Burstein_Restricted}] 
For all $n\ge 1$,
\[
|\mathfrak{D}^1_{2n} (213)| = 
C_{n-1}.
\]
\end{theorem}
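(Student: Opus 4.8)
The plan is to find a bijection (or at least an enumerative correspondence) between $\mathfrak{D}^1_{2n}(213)$ and some set known to be counted by $C_{n-1}$, and the natural candidate is a subset of Dyck paths of semilength $n-1$, or Dumont-$1$ permutations avoiding a length-three pattern of size $2(n-1)$. First I would examine the structure forced on a $213$-avoiding permutation: if $\pi$ avoids $213$, then everything to the left of the position of the value $1$ must be smaller than everything to its right (otherwise a larger-then-smaller pair before $1$, together with $1$, gives a $213$). Combined with the Dumont-$1$ condition that the final entry is odd and every even entry is a descent top, this pins down where small odd values and the value $2$ can go.

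The key steps, in order: (1) locate the entry equal to $1$ in $\pi \in \mathfrak{D}^1_{2n}(213)$; show that $213$-avoidance forces $\pi$ to decompose as $\alpha\, 1\, \beta$ where $\alpha < \beta$ as sets, and in fact (using that $1$ cannot be a descent bottom unless preceded by an even descent top, and cannot end the permutation) deduce that $1$ sits in position $2$ preceded by an even value, or more precisely that the block before $1$ is very short; (2) show that the even value immediately preceding $1$ must be $2$ (again from $213$-avoidance: any larger even value $e$ before $1$ with something between them creates a pattern, and minimality arguments force $e=2$), so $\pi$ begins $2\,1\,\cdots$; (3) having stripped off the prefix $21$, argue that the remaining word on $\{3,4,\dots,2n\}$, suitably order-reduced, is again a Dumont-$1$ permutation of size $2n-2$ avoiding $213$ — here one must check the Dumont-$1$ parity conditions are preserved under the shift by $2$ and that no new $213$ can appear using the removed entries; (4) conclude $|\mathfrak{D}^1_{2n}(213)| = |\mathfrak{D}^1_{2n-2}(213)|$, and since $|\mathfrak{D}^1_2(213)| = |\{21\}| = 1 = C_0$, induction gives $|\mathfrak{D}^1_{2n}(213)| = 1$ for all $n \geq 1$ — but that contradicts $C_{n-1}$, so the recursion must instead be richer.

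Because of that last tension, I expect the actual argument to be: the prefix before $1$ need not be exactly $21$; rather, $213$-avoidance forces the prefix to be an increasing-then-structured block, and the real recursion expresses $\mathfrak{D}^1_{2n}(213)$ in terms of a \emph{sum} over the position of $1$, each summand being a product of two smaller restricted Dumont counts, yielding the Catalan convolution $C_{n-1} = \sum C_i C_{n-2-i}$ (or the generating-function identity $C(x) = 1 + x\,C(x)^2$ with an index shift). So the plan is: set $D(x) = \sum_{n\ge 0} |\mathfrak{D}^1_{2n}(213)|\, x^n$, derive from the block decomposition around the value $1$ (or around the first ascent, exploiting that the Dumont condition makes odd entries ascent bottoms) a functional equation of the form $D(x) = 1 + x\, D(x) \cdot (\text{something}) $ that reduces to $D(x) - 1 = x\, C(x)$ where $C(x)$ is the Catalan generating function; reading off coefficients then gives $|\mathfrak{D}^1_{2n}(213)| = C_{n-1}$.

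The main obstacle, I expect, is step (3)/(the decomposition): correctly identifying how a $213$-avoiding Dumont-$1$ permutation splits at the value $1$ (or at a left-to-right minimum) into \emph{two} independent Dumont-$1$-avoiding blocks while respecting the even/odd position-and-value constraints — in particular verifying that the parity of positions in each sub-block matches the parity required after order-reduction, and that gluing cannot create a forbidden $213$ straddling the two blocks. This bookkeeping of parities under shifting is the delicate part; the generating-function manipulation afterward is routine.
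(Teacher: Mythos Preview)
This theorem is not proved in the present paper; it is quoted from \cite{Burstein_Restricted} as a known result. So there is no in-paper proof to compare against, but your proposal has concrete errors that would block it regardless.

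Your decomposition around the value $1$ is backwards. If $\pi$ avoids $213$ and $\pi(j)=1$, then for $a$ to the left of $1$ and $b$ to its right, the triple $(a,1,b)$ is an instance of $213$ exactly when $1<a<b$; hence avoidance forces every entry \emph{left} of $1$ to be \emph{greater} than every entry to its right --- the opposite of what you wrote. Combined with the Dumont-1 condition that the even entry $2$ must be immediately followed by a smaller entry (hence by $1$), this shows that $\pi$ \emph{ends} in $21$, not that it begins with $21$. The permutation $3421\in\mathfrak{D}^1_4(213)$ already refutes your step~(2).

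With the correct observation $\pi=(\sigma+2)\,2\,1$ for some $\sigma\in S_{2n-2}(213)$, your step~(3) still fails: $\sigma$ is not Dumont-1, because its last entry (a descent top onto $2$ in $\pi$) is even. You noticed the resulting recursion gives the wrong count, but your fallback --- a vague Catalan convolution coming from a block decomposition around $1$ --- never materializes, and in fact cannot, since the two blocks on either side of $1$ do not both inherit the Dumont-1 parity constraints after order-reduction. The missing idea is to \emph{complement} $\sigma$ inside $S_{2n-2}$: the map $j\mapsto (2n-1)-j$ swaps parities and swaps ascents with descents, turning $\sigma$ into a genuine Dumont-1 permutation of length $2n-2$ that avoids $213^{\,c}=231$. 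This yields a bijection $\mathfrak{D}^1_{2n}(213)\to\mathfrak{D}^1_{2n-2}(231)$, and the latter set has cardinality $C_{n-1}$ by the theorem of Mansour cited just above in the paper.
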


It is important to note here that although reverses, complements, and inverses of patterns create symmetry classes and Wilf-equivalences in the set of all permutations, Dumont permutations are not closed under any symmetry operations, so that applying a symmetry operation to a pattern does not necessarily yield a pattern-avoiding set of the same cardinality. For example, patterns $132$ and $312$ are complements of each other, and $|\mathfrak{D}^1_{2n} (132)| = |\mathfrak{D}^1_{2n} (312)| = C_n$; on the other hand, patterns $213$ and $231$ are also complements of each other, but $|\mathfrak{D}^1_{2n} (213)| = C_{n-1} \ne C_n =  |\mathfrak{D}^1_{2n} (231)|$. 

\begin{theorem}[\cite{Burstein_Restricted}] 
For all $n\ge 0$,
\[
|\mathfrak{D}^1_{2n} (321)| = 1, \quad \text{namely,} \quad \mathfrak{D}^1_{2n} (321) = \{2, 1, 4, 3, \dots, 2n, 2n-1\}.
\]
\end{theorem}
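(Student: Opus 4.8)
The plan is to prove the two inclusions separately, with essentially all the content in the uniqueness half. For the easy direction, one checks directly that $\sigma := 2\,1\,4\,3\,\cdots\,(2n)\,(2n-1)$ is a Dumont-1 permutation avoiding $321$: the even entry $2k$ sits at position $2k-1$ and is followed by the smaller entry $2k-1$; the odd entry $2k-1$ sits at position $2k$ and is followed by the larger entry $2k+2$ (or is the terminal entry $2n-1$); and a longest decreasing subsequence of $\sigma$ has length $2$, so $\sigma$ avoids $321$. It remains to show that $\sigma$ is the \emph{only} element of $\mathfrak{D}^1_{2n}(321)$. One should stress here that the usual symmetry-based description of $\Av(321)$ is unavailable, since $\mathfrak{D}^1_{2n}$ is not closed under the dihedral symmetries.

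First I would extract the skeleton of any $\pi \in \mathfrak{D}^1_{2n}(321)$. The basic structural fact about $321$-avoidance is that there are no two consecutive descents (positions $i, i+1$ both descents would give $\pi(i) > \pi(i+1) > \pi(i+2)$, an occurrence of $321$). On the other hand, in a Dumont-1 permutation the descent set is exactly $\{\, i : \pi(i) \text{ even}\,\}$, because every even entry is a descent top lying at a position $< 2n$, while every odd entry at a position $< 2n$ is an ascent bottom. Hence $\pi$ has precisely $n$ descents, all among the $2n-1$ positions $1, \dots, 2n-1$, and no two are adjacent; the remaining $n-1$ positions there are ascents and are exactly enough to separate the $n$ descents, forcing the descents to occupy the odd positions $1, 3, \dots, 2n-1$. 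Consequently $\pi(2i-1)$ is even and $\pi(2i)$ is odd for every $i$, and $\pi$ is down--up alternating: $\pi(1) > \pi(2) < \pi(3) > \pi(4) < \cdots$.

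With the skeleton fixed, I would peel off the last two entries by induction on $n$ (base case $\mathfrak{D}^1_2 = \{21\}$). Locate the maximal entry $2n$; it occupies some odd position $2m-1$. If $m < n$, then positions $2m+1$ and $2m+2$ lie to its right and form a descent with $\pi(2m+1) < 2n$, producing a $321$, a contradiction; hence $2n = \pi(2n-1)$. A similar argument tracking the value $2n-1$ (which must lie at an even position) shows $2n-1 = \pi(2n)$; the one extra case to dismiss is $2n-1$ sitting immediately left of $2n$, ruled out because the alternating condition $\pi(2n-3) > \pi(2n-2)$ forces $\pi(2n-3)$ even and $> 2n-1$, impossible. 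Deleting $\pi(2n-1)\pi(2n) = (2n)(2n-1)$ then leaves a word on the ground set $[2n-2]$ that still satisfies the Dumont-1 inequalities (its new terminal entry $\pi(2n-2)$ is odd) and still avoids $321$; by the inductive hypothesis it equals $2\,1\,\cdots\,(2n-2)(2n-3)$, so $\pi = \sigma$.

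The main obstacle I anticipate is not any one deduction but the careful bookkeeping in the reduction step: one must verify that after removing $(2n)(2n-1)$ the remaining word is genuinely an element of $\mathfrak{D}^1_{2n-2}$, and in particular that its ground set is exactly $[2n-2]$ — which is precisely why both $2n$ and $2n-1$ must be pinned to the last two positions before the induction can be invoked. The argument for $2n-1$ is the slightly subtle part, since beyond the generic ``descent to the right'' obstruction one also has to use the alternating pattern near the end of $\pi$ to exclude the configuration where $2n-1$ directly precedes $2n$.
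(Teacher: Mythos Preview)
Your argument is correct. The key steps --- that a Dumont-1 permutation has exactly $n$ descents (one at the position of each even value), that $321$-avoidance forbids consecutive descents, and that $n$ pairwise nonadjacent descents among $2n-1$ positions forces them to the odd positions --- are all sound, and the induction peeling off $\pi(2n-1)\pi(2n)=(2n)(2n-1)$ is carefully done, including the edge case $\pi(2n-2)=2n-1$.

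As for comparison with the paper: this theorem is quoted from \cite{Burstein_Restricted} and no proof is given here, so there is nothing in the present paper to compare your argument against. Your proof is self-contained and would serve perfectly well as a proof of the cited result.
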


\begin{theorem}[\cite{Burstein_Restricted}] 
For all $n\ge 3$,
\[
|\mathfrak{D}^1_{2n} (123)| = 4, \quad \text{namely,} \quad \mathfrak{D}^1_{2n} (123) = \{(2n-1, 2n, 2n-3, 2n-2, \dots, 7, 8, \pi)\, |\, \pi \in \mathfrak{D}^1_{6} (123)\},
\]
where
\[
\mathfrak{D}^1_{6} (123) = \{436215, 562143, 563421, 564213\}.
\]
\end{theorem}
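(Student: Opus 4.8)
The plan is to argue by induction on $n$, keeping track of where the two largest entries $2n-1$ and $2n$ sit in a permutation $\sigma\in\mathfrak{D}^1_{2n}(123)$. Since $2n-1$ is odd, the defining condition of $\mathfrak{D}^1_{2n}$ forces it to be either the last entry of $\sigma$ or immediately followed by a larger entry, and the only entry larger than $2n-1$ is $2n$. So there are two cases, and the whole argument is the (finite) verification that each case behaves as the theorem predicts.

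First I would handle the case where $(2n-1)(2n)$ occurs as a consecutive block, say in positions $j,j+1$. Because $2n$ is the maximum entry, avoidance of $123$ forces every entry in positions $1,\dots,j-1$ to be larger than $2n-1$; as $2n$ is the only such value and it is already at position $j+1$, we must have $j=1$. Thus $\sigma(1)=2n-1$, $\sigma(2)=2n$, and the word $\sigma(3)\sigma(4)\cdots\sigma(2n)$ is a permutation of $[2n-2]$ which, one checks, lies in $\mathfrak{D}^1_{2n-2}(123)$; conversely, prepending $(2n-1)(2n)$ to any element of $\mathfrak{D}^1_{2n-2}(123)$ gives an element of $\mathfrak{D}^1_{2n}(123)$, since the two new entries are too large to take part in any occurrence of $123$.

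Next I would handle the case where $2n-1$ is the last entry of $\sigma$. Avoidance of $123$ forces the subword of $\sigma$ formed by the values in $[2n-2]$ to be decreasing, so $\sigma$ is the word $(2n-2)(2n-3)\cdots 2\,1$ with $2n$ inserted at some point and $2n-1$ appended. The odd entries of that decreasing run other than its final entry $1$ --- of which there are $n-2$ --- must each be immediately followed by a larger entry, hence must each immediately precede the inserted $2n$; since only one entry can do that, $n-2\le 1$, i.e. $n\le 3$. For $n=3$ the insertion point is then forced and one obtains exactly $\sigma=436215$, while for $n\ge 4$ this case is vacuous. I expect the bookkeeping in this case --- reconciling the $\mathfrak{D}^1$ condition with $123$-avoidance along the decreasing run --- to be the main thing to get right; the rest reduces to routine checks.

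Combining the two cases, for $n\ge 4$ we get $\mathfrak{D}^1_{2n}(123)=\{(2n-1)(2n)\,\tau:\tau\in\mathfrak{D}^1_{2n-2}(123)\}$, and since $(2n-1)(2n)$ prepended to $(2n-3,2n-2,\dots,7,8,\rho)$ is $(2n-1,2n,2n-3,2n-2,\dots,7,8,\rho)$, the inductive hypothesis finishes the step. For the base case $n=3$ the block case contributes $56\tau$ for $\tau\in\mathfrak{D}^1_4(123)$, and one checks directly that $\mathfrak{D}^1_4(123)=\mathfrak{D}^1_4=\{2143,3421,4213\}$ (consistent with $|\mathfrak{D}^1_4|=G_6=3$), so together with $436215$ from the other case we get $\mathfrak{D}^1_6(123)=\{436215,562143,563421,564213\}$, and hence $|\mathfrak{D}^1_{2n}(123)|=4$ for all $n\ge 3$.
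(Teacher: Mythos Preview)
Your argument is correct. The two-case split according to whether $2n-1$ is last or is immediately followed by $2n$ is exactly the right dichotomy, the reduction in Case~1 to $\mathfrak{D}^1_{2n-2}(123)$ is clean, and in Case~2 the counting of the odd entries $3,5,\dots,2n-3$ that must each immediately precede the inserted $2n$ gives the bound $n\le 3$ and forces the unique permutation $436215$ when $n=3$. The base case verification is routine and complete.

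Note, however, that the present paper does not actually prove this theorem: it is quoted from \cite{Burstein_Restricted} as a known result, with no argument given here. So there is no ``paper's own proof'' to compare against. Your inductive approach is the natural one and is essentially what one finds in the cited reference; there is nothing to add.
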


Now, with respect to enumerations of Dumont-2 permutations avoiding patterns of length three, we can start with the makeup of a Dumont-2 permutation.  It is straightforward that for all $n \ge 3$, 
$$|\mathfrak{D}^2_{2n} (123)| = |\mathfrak{D}^2_{2n} (132)| = |\mathfrak{D}^2_{2n} (213)| = 0.$$  
This follows from the fact that every even position is a deficiency, every odd position is an excedance or a fixed point, and for any Dumont-2 permutation $\pi(2) = 1$ and $\pi(2n-1) = 2n$ or $2n-1$,  which eventually leads to the conclusion that it is impossible to avoid 123, 132, and 213.

\begin{theorem}[\cite{Burstein_Restricted}]
For all $n\ge 1$,
\[
|\mathfrak{D}^2_{2n} (231)| = 2^{n-1}.
\] 
\end{theorem}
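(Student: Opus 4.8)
The plan is to obtain a recursion for $a_n := |\mathfrak{D}^2_{2n}(231)|$ by decomposing a permutation $\pi\in\mathfrak{D}^2_{2n}(231)$ at the position of its largest entry $2n$. First note that in \emph{any} Dumont-2 permutation the value $2n$ must sit at an odd position: if $\pi(i)=2n$ with $i$ even, the defining inequality $\pi(i)<i$ would force $2n<i\le 2n$. So write $m=2j+1$ for this position, with $0\le j\le n-1$. Since $2n$ is the global maximum, $231$-avoidance forces every entry to the left of position $m$ to be smaller than every entry to its right (otherwise such a pair together with $2n$ is a $231$). Hence $\pi=L\,(2n)\,R$, where $L=\pi(1)\cdots\pi(m-1)$ is a $231$-avoiding permutation of $[m-1]$ and $R=\pi(m+1)\cdots\pi(2n)$ is a $231$-avoiding permutation of $\{m,\dots,2n-1\}$, and because $L<R$ and $2n$ is maximal there is no occurrence of $231$ using entries from both $L$ and $R$; so $\pi$ is recovered freely from the triple $(m,L,R)$. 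Moreover the positions $1,\dots,m-1$ carry exactly the values $1,\dots,m-1$ and $m-1$ is even, so the Dumont-2 inequalities for $\pi$, restricted to these positions, say precisely that $L\in\mathfrak{D}^2_{m-1}$; thus $L$ ranges over the $a_j$ elements of $\mathfrak{D}^2_{m-1}(231)$.

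It remains to count the choices for $R$. Standardizing, set $\sigma(t)=\pi(m+t)-(m-1)$ for $1\le t\le 2n-m$; then $\sigma$ is a $231$-avoiding permutation of $[2n-m]$, where $2n-m$ is odd, and translating the Dumont-2 inequalities through the shift shows $\sigma(t)\le t$ for odd $t$ and $\sigma(t)>t$ for even $t$. The key lemma, and the step I expect to be the main obstacle, is that for every $k\ge 0$ there is \emph{exactly one} such permutation $\sigma\in S_{2k+1}$, namely $\sigma=1\,3\,2\,5\,4\,\cdots\,(2k+1)(2k)$. I would prove this by induction on $k$: the entry in the (even) position $2k$ must exceed $2k$, hence equals $2k+1$ and is the maximum, so the $231$-structure above forces the last position to hold the value $2k$ and the prefix $\sigma(1)\cdots\sigma(2k-1)$ to be a $231$-avoiding permutation of $[2k-1]$ again satisfying the same inequalities, to which the inductive hypothesis applies. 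That the displayed $\sigma$ does avoid $231$ is easy: its only inversions are the adjacent pairs at positions $(2\ell,2\ell+1)$, whose values $2\ell+1,2\ell$ are consecutive integers, so no third entry can lie strictly between them, and $231$ requires exactly such an entry.

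Combining these observations, the number of $\pi\in\mathfrak{D}^2_{2n}(231)$ with $2n$ in position $m=2j+1$ is $a_j\cdot 1$, so
\[
a_n=\sum_{j=0}^{n-1}a_j,\qquad a_0=1 \ \text{(the empty permutation)}.
\]
Subtracting the identity for $n-1$ gives $a_n=2a_{n-1}$ for $n\ge 2$, and since $a_1=a_0=1$ we conclude $a_n=2^{n-1}$ for all $n\ge 1$, as claimed.
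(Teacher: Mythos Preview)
Your argument is correct. The paper does not actually prove this theorem---it merely quotes it from \cite{Burstein_Restricted}---so there is no ``paper's own proof'' to compare against here. Your decomposition at the position of $2n$ is sound: the Dumont-2 inequalities force $2n$ to an odd position, $231$-avoidance splits $\pi$ into an initial Dumont-2 block $L$ and a tail $R$, and your inductive lemma pinning down $R$ uniquely (as $1\,3\,2\,5\,4\cdots$) is right. One small remark on exposition: your verification that $1\,3\,2\,5\,4\cdots$ avoids $231$ is easiest to phrase via positions rather than values---since every inversion $(p,q)$ has $q=p+1$, the pair $(i,k)$ in a would-be $231$ at positions $i<j<k$ is an inversion, forcing $k=i+1$ and leaving no room for $j$---though your value-based version also works once one notes that \emph{all} inversions (not just adjacent ones) have consecutive values.

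It may interest you that the paper's proof of the parallel statement $|\mathfrak{D}^4_{2n}(1342)|=2^{n-1}$ (Theorem~\ref{1342_theorem}) takes a rather different route: instead of decomposing at the maximum, it first shows every odd entry is fixed and every deficiency at position $2k$ equals $2k-2$, then bijects the remaining choices with subsets of $[n-1]$ (equivalently, compositions of $n$). Your recursive approach is arguably cleaner for the Dumont-2 case, since the defining inequalities translate directly under the left/right split.
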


\begin{theorem}[\cite{Burstein_Restricted}]
 For all $n\ge 0$,
\[
|\mathfrak{D}^2_{2n} (312)| = 1, \quad \text{namely} \quad \mathfrak{D}^2_{2n} (312) = \{2, 1, 4, 3, \dots, 2n, 2n-1\}.
\] 
\end{theorem}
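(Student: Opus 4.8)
The plan is to prove this by induction on $n$, showing the unique element of $\mathfrak{D}^2_{2n}(312)$ is the block permutation $\sigma_n$ defined by $\sigma_n(2i-1) = 2i$ and $\sigma_n(2i) = 2i-1$, which written in one-line notation is $2\,1\,4\,3\,\cdots\,(2n)\,(2n-1)$. The base cases $n=0$ and $n=1$ are immediate: the empty permutation works, and in $S_2$ the deficiency condition $\pi(2) < 2$ forces $\pi(2)=1$, hence $\pi = 21 = \sigma_1$. Before the induction I would record that $\sigma_n$ itself lies in $\mathfrak{D}^2_{2n}(312)$: the Dumont-2 inequalities hold since every odd position carries an excedance and every even position a deficiency, and no $312$ can occur because the values partition into blocks $\{2i-1,2i\}$ sitting at positions $\{2i-1,2i\}$, so the only descent tops are the entries $\sigma_n(2i-1)=2i$, and every position after $2i-1$ holds a value strictly larger than $2i$, leaving no entry available to play the role of the ``$2$'' in a $312$ pattern.

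For the inductive step, take $\pi \in \mathfrak{D}^2_{2n}(312)$ with $n \ge 2$. The first step is to pin down the first two entries. Position $2$ is a deficiency, so $\pi(2) = 1$. Position $1$ is a fixed point or excedance, and the value $1$ is already used, so $\pi(1) \ge 2$. If $\pi(1) = m \ge 3$, then the value $2$ occupies some position $p \ge 3$, and the triple $(\pi(1),\pi(2),\pi(p)) = (m,1,2)$ is order-isomorphic to $312$ (largest, smallest, middle), contradicting $312$-avoidance. Hence $\pi(1) = 2$ and $\pi(2) = 1$.

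The second step is to strip off this leading block and recurse. Since the values $1$ and $2$ now occupy positions $2$ and $1$, the positions $3,\dots,2n$ contain exactly the values $3,\dots,2n$; subtracting $2$ from every position and every value of $\pi$ restricted to positions $3,\dots,2n$ produces a permutation $\pi' \in S_{2n-2}$. A routine check shows the Dumont-2 inequalities $\pi'(2i) < 2i$ and $\pi'(2i-1) \ge 2i-1$ are inherited directly from those of $\pi$ (the index and value both shift by $2$), so $\pi' \in \mathfrak{D}^2_{2n-2}$, and $\pi'$ avoids $312$ because $312$-avoidance is hereditary under taking subpermutations. By the inductive hypothesis $\pi' = \sigma_{n-1}$, i.e. $\pi(2i-1) = 2i$ and $\pi(2i) = 2i-1$ for all $i \ge 2$; combined with $\pi(1)=2$, $\pi(2)=1$ this gives $\pi = \sigma_n$, completing the induction.

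I expect the only substantive point to be the short argument forcing $\pi(1) = 2$ — and even that is essentially free, since the deficiency at position $2$ hands us $\pi(2) = 1$, which is precisely the ``$1$'' needed to build the forbidden $(m,1,2)$ whenever $\pi(1) \ge 3$. The remaining work is bookkeeping: verifying $\sigma_n$ satisfies the definitions and avoids $312$, and checking that the order-preserving truncation to positions $3,\dots,2n$ stays within $\mathfrak{D}^2_{2n-2}$. No genuine obstacle arises.
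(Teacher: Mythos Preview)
Your proof is correct. The paper itself does not prove this theorem; it is quoted from \cite{Burstein_Restricted} without argument, so there is no in-paper proof to compare against. Your induction---forcing $\pi(2)=1$, then $\pi(1)=2$ via the would-be $312$ instance $(m,1,2)$, then stripping the leading block---is the natural approach and goes through without issue.

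One small wording slip in your verification that $\sigma_n$ avoids $312$: you write ``every position after $2i-1$ holds a value strictly larger than $2i$,'' which is false as stated, since position $2i$ holds $2i-1$. What you need (and presumably meant) is that every position after $2i$ holds a value strictly larger than $2i$; hence the only candidate for the ``$1$'' following a ``$3$'' equal to $2i$ is the entry $2i-1$, and then no integer lies strictly between them to serve as the ``$2$''. A cleaner one-line alternative: $\sigma_n$ is the direct sum of $n$ copies of $21$, and a $312$ occurrence cannot straddle summands of a direct sum (the ``$3$'' has the largest value yet the leftmost position), while each summand is too short to contain a length-$3$ pattern.
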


\begin{theorem}[\cite{Mansour}]
For all $n\ge 0$,
\[
|\mathfrak{D}^2_{2n} (321)| = C_n = \frac{1}{n+1} \binom{2n}{n}.
\]
\end{theorem}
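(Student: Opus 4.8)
The plan is to show that a Dumont-2 permutation $\pi\in\mathfrak{D}^2_{2n}$ avoids $321$ if and only if the subsequence of entries in odd positions, $(\pi(1),\pi(3),\dots,\pi(2n-1))$, and the subsequence of entries in even positions, $(\pi(2),\pi(4),\dots,\pi(2n))$, are each increasing. Granting this, $\pi$ is an interleaving of two increasing sequences, hence is completely determined by the set $B=\{\pi(2),\pi(4),\dots,\pi(2n)\}\subseteq[2n]$ of values occupying even positions (both $B$ and its complement $A=[2n]\setminus B$ must then appear in increasing order). So counting $321$-avoiding Dumont-2 permutations reduces to counting the admissible sets $B$.

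The characterization rests on one elementary observation: for every $i$ we have $\pi(2i-1)>\pi(2i)$. Indeed, $\pi(2i-1)\ge 2i-1$ since position $2i-1$ carries a fixed point or an excedance, while $\pi(2i)<2i$, i.e.\ $\pi(2i)\le 2i-1$, since position $2i$ carries a deficiency; the two entries are distinct, so the inequality is strict. For the ``only if'' direction, argue contrapositively: if $\pi(2i)>\pi(2j)$ for some $i<j$, then $\pi(2i-1)>\pi(2i)>\pi(2j)$ occurs at positions $2i-1<2i<2j$, a $321$; and if $\pi(2i-1)>\pi(2j-1)$ for some $i<j$, then $\pi(2i-1)>\pi(2j-1)>\pi(2j)$ occurs at positions $2i-1<2j-1<2j$, again a $321$. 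The ``if'' direction is immediate, since a union of two increasing subsequences contains no decreasing subsequence of length $3$.

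It then remains to count the sets $B=\{b_1<b_2<\dots<b_n\}$ for which placing $B$ increasingly at even positions and $A=\{a_1<\dots<a_n\}$ increasingly at odd positions yields a Dumont-2 permutation, i.e.\ $b_i\le 2i-1$ and $a_i\ge 2i-1$ for all $i$. One checks that the first family of inequalities already forces the second: if $b_i\le 2i-1$ for all $i$, then $B$ has at least $i$ elements in $[2i-1]$, hence $A$ has at most $i-1$, so $a_i\ge 2i$. Thus the admissibility condition on $B$ is exactly $b_i\le 2i-1$ for all $i$, equivalently $|B\cap[m]|\ge\lceil m/2\rceil$ for every $m\in[2n]$. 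Encoding $m\in B$ as an up-step and $m\notin B$ as a down-step turns this into precisely the defining condition of a Dyck path of semilength $n$, so the number of admissible $B$ equals $C_n$. (The composite $\pi\mapsto B\mapsto$ Dyck path is then a bijection $\mathfrak{D}^2_{2n}(321)\to\{\text{Dyck paths of semilength }n\}$.)

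The main obstacle is purely bookkeeping: disentangling the two-sided constraints imposed by the Dumont-2 definition ($\ge 2i-1$ at odd positions, $<2i$ at even positions) and confirming that they collapse to a single clean ballot-type inequality on $B$. Once that translation is in place, the pattern-avoidance characterization and the resulting Catalan count are routine; the small cases $n=1,2,3$ (giving $1,2,5$) can be checked by hand as a sanity test.
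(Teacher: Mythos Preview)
Your argument is correct. The key observation $\pi(2i-1)>\pi(2i)$ is exactly what makes the two-increasing-subsequence characterization work, and your reduction to the ballot condition $b_i\le 2i-1$ (with the verification that the odd-position constraint is then automatic) is clean and complete. The Dyck-path encoding is standard and correctly applied.

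As for comparison: the paper does not actually give a proof of this theorem. It is stated as a citation to Mansour's paper on restricted $132$-Dumont permutations, with no argument reproduced here. So there is nothing in this paper to compare your approach against. For what it is worth, the only Catalan-via-Dyck-path argument the paper does spell out is for the Dumont-4 analogue $|\mathfrak{D}^4_{2n}(321)|=C_n$ (Theorem~\ref{thm:1432}), and that proof goes through Krattenthaler's bijection based on right-to-left minima rather than through an interleaving-of-two-increasing-sequences decomposition. Your approach, by contrast, exploits the rigid parity structure of Dumont-2 permutations directly, which makes the bijection to Dyck paths more transparent in this particular setting.
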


\subsubsection{Patterns of length four}
\label{length four_thms}

Now, for Dumont-1 and Dumont-2 permutations avoiding patterns of length four, there are several cases which are open, as well as several enumerations of permutations that avoid two patterns of length four simultaneously.  We begin with Dumont-2 permutations avoiding a single pattern of length four.  

\begin{theorem}[\cite{Burstein_Restricted}]
\label{3142_thm}
For all $n\ge 0$,
\[
|\mathfrak{D}^2_{2n} (3142)| = C_{n}.
\]
\end{theorem}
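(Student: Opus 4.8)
The plan is to set up a first-return (direct-sum) decomposition of Dumont-2 permutations, analyse the indecomposable pieces, and recognise the resulting counting recursion as the one defining the Catalan numbers. Throughout, for $\sigma\in S_{2j}$ and $\tau\in S_{2m}$ let $\sigma\oplus\tau\in S_{2j+2m}$ denote the permutation whose first $2j$ entries are those of $\sigma$ and whose last $2m$ entries are those of $\tau$ with $2j$ added to each; call $\pi$ \emph{indecomposable} if it is not $\sigma\oplus\tau$ for any nonempty $\sigma,\tau$.

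First I would verify the easy structural facts: the direct sum of two Dumont-2 permutations is again Dumont-2 (adding the even number $2j$ to the positions and values of the second block preserves all parities and all excedance/deficiency relations), and every $\pi\in\mathfrak{D}^2_{2n}$ factors uniquely as $\pi=\sigma\oplus\tau$ with $\sigma$ indecomposable — take the least even $2j$ with $\{\pi(1),\dots,\pi(2j)\}=[2j]$, which exists since $j=n$ works; one may restrict to even return positions because a return at an odd position $2j-1$ forces (by the odd-position condition) $\pi(2j-1)=2j-1$, hence a return at $2j-2$. Next comes the key pattern lemma: $\sigma\oplus\tau$ avoids $3142$ if and only if $\sigma$ and $\tau$ both do. Only the "if" direction needs work, and it reduces to showing that an occurrence of $3142$ cannot straddle the two blocks: if the four entries of an occurrence have values $v_1<v_2<v_3<v_4$ occurring in the order $v_3,v_1,v_4,v_2$, then the entries falling in the first block, being the smallest among the four, form an initial segment of $(v_3,v_1,v_4,v_2)$, and no nonempty proper initial segment of that sequence consists of the $k$ smallest values ($\{v_3\}\neq\{v_1\}$, $\{v_3,v_1\}\neq\{v_1,v_2\}$, $\{v_3,v_1,v_4\}\neq\{v_1,v_2,v_3\}$). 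Writing $f_n=|\mathfrak{D}^2_{2n}(3142)|$ (so $f_0=1$) and $i_n$ for the number of indecomposable members of $\mathfrak{D}^2_{2n}(3142)$, the decomposition then gives $f_n=\sum_{j=1}^{n}i_jf_{n-j}$ for $n\geq 1$.

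The crux is to evaluate $i_n$: I claim $i_1=1$ and $i_n=f_{n-1}$ for $n\geq 2$. For this I would first show that any indecomposable $\pi\in\mathfrak{D}^2_{2n}(3142)$ has $\pi(1)=2n$. Since $\pi(2)=1$ always, suppose $\pi(1)=a$ with $3\leq a<2n$ and let $p\geq 3$ be the first position with $\pi(p)>a$; then for every $q>p$ an entry $\pi(q)$ with $1<\pi(q)<a$ would make $(\pi(1),\pi(2),\pi(p),\pi(q))$ an occurrence of $3142$, so all entries from position $p$ on exceed $a$, which forces by counting that positions $3,\dots,p-1$ carry exactly the values $2,\dots,a-1$ (so $p=a+1$); hence $\{\pi(1),\dots,\pi(a)\}=[a]$ with $a<2n$, so $\pi$ has a proper prefix block and is decomposable, a contradiction. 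Thus $\pi=2n,\,1,\,\sigma$ with $\sigma$ a permutation of $\{2,\dots,2n-1\}$ on the last $2n-2$ positions. Moreover neither $\pi(1)=2n$ (too large to be any entry of a $3142$-pattern) nor $\pi(2)=1$ (being the minimum, it would be the second entry of such a pattern and would need an entry to its left, namely $2n$) can belong to a $3142$-occurrence, so $\pi$ avoids $3142$ exactly when $\sigma$ does. Decreasing the positions of $\sigma$ by $2$ and its values by $1$ gives a permutation $\sigma'$ of $[2n-2]$ with an excedance at every odd position and a deficiency or fixed point at every even position; applying the reverse-complement — under which $3142$ is invariant, and which carries that property bijectively onto the defining condition of $\mathfrak{D}^2_{2n-2}$ — produces a bijection onto $\mathfrak{D}^2_{2n-2}(3142)$, so $i_n=f_{n-1}$. (Also $i_1=1$, realised by $21$, agrees with $f_0=1$.)

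Substituting $i_j=f_{j-1}$ into $f_n=\sum_{j=1}^n i_j f_{n-j}$ gives $f_n=\sum_{k=0}^{n-1}f_k f_{n-1-k}$ with $f_0=1$, which is precisely the recursion and initial value defining the Catalan numbers, so $f_n=C_n$. I expect the indecomposability analysis to be the main obstacle: pinning down $\pi(1)=2n$ purely from $3142$-avoidance, and checking that the relabel-then-reverse-complement map lands exactly in $\mathfrak{D}^2_{2n-2}$, are the delicate points, whereas the pattern lemma and the final recursion are routine.
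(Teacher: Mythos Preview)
The paper does not actually prove this theorem: it is quoted from \cite{Burstein_Restricted} as a prior result, with no argument given here. So there is nothing in this paper to compare your proof against.

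That said, your argument is correct and self-contained. The direct-sum decomposition of $\mathfrak{D}^2_{2n}$ into an indecomposable prefix and a Dumont-2 tail is valid (your parity remark correctly reduces odd-position returns to even ones), and the sum-indecomposability lemma for $3142$ is checked properly. The key step---that an indecomposable $3142$-avoider in $\mathfrak{D}^2_{2n}$ must begin with $2n$---is sound: the case $\pi(1)=2$ gives an immediate return at position $2$, and for $3\le a<2n$ your first-exceedance argument forces $\{\pi(1),\dots,\pi(a)\}=[a]$, contradicting indecomposability (when $a$ is odd, your earlier observation pushes the return down to $a-1$). The shift-then-reverse-complement bijection is also correct: shifting positions by $-2$ and values by $-1$ turns the Dumont-2 conditions at positions $\ge 3$ into ``strict excedance at odd positions, non-excedance at even positions,'' and since $3142$ is fixed by reverse-complement while that symmetry swaps the two parity conditions, you land exactly in $\mathfrak{D}^2_{2n-2}(3142)$. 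The resulting recursion $f_n=\sum_{k=0}^{n-1}f_kf_{n-1-k}$ with $f_0=1$ then yields $f_n=C_n$.

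For comparison, the original proof in \cite{Burstein_Restricted} proceeds by a different block decomposition (tracking the position of $2n$ and establishing that the block to its right is essentially a shifted Dumont-2 permutation), but the underlying mechanism---reducing to a Catalan convolution via the self-similarity of the class---is the same in spirit. Your use of reverse-complement to repair the parity mismatch after stripping $2n,1$ is a clean touch.
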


For pattern $4132$, note that $321$ is a subsequence of $4132$, therefore $\mathfrak{D}^2_{2n} (321) \subseteq \mathfrak{D}^2_{2n} (4132)$. The following theorem shows that, in fact, the two sets are equal.

\begin{theorem}[\cite{Burstein_Dyck}]
For all $n\ge 0$,
\[
|\mathfrak{D}^2_{2n} (4132)| = |\mathfrak{D}^2_{2n} (321)| = C_n.
\]
\end{theorem}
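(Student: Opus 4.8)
The plan is to prove the inclusion $\mathfrak{D}^2_{2n}(4132)\subseteq\mathfrak{D}^2_{2n}(321)$, since the reverse inclusion is immediate from the fact that $321$ is a subpattern of $4132$; combined with the known value $|\mathfrak{D}^2_{2n}(321)|=C_n$ from the earlier theorem of Mansour, this yields the claim. So the real content is: if $\pi\in\mathfrak{D}^2_{2n}$ avoids $4132$, then $\pi$ avoids $321$ as well. I would prove the contrapositive: assuming $\pi\in\mathfrak{D}^2_{2n}$ contains an occurrence of $321$, I exhibit an occurrence of $4132$.

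First I would recall the structural facts about Dumont-2 permutations that constrain where a $321$ pattern can sit: every entry in an even position is a deficiency ($\pi(2i)<2i$) and every entry in an odd position is a fixed point or excedance ($\pi(2i-1)\ge 2i-1$). Suppose $\pi(a)>\pi(b)>\pi(c)$ with $a<b<c$ is an occurrence of $321$. The key observation is that if we can always ``extend'' a $321$ occurrence to the \emph{left} by a larger entry — that is, find $a'<a$ with $\pi(a')>\pi(a)$ — and arrange that the relative order of $\pi(a'),\pi(a),\pi(b),\pi(c)$ is $4321$ (hence contains $4132$... no), that is not quite it. Instead I would look for an entry that sits \emph{between} positions $a$ and $c$ in value between $\pi(b)$ and $\pi(a)$, which together with $\pi(a),\pi(b),\pi(c)$ in the right positional order gives $4132$; concretely, I want positions $a<b$ and a value $d$ with $\pi(c)<\pi(b)<d<\pi(a)$. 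The natural candidate for producing the $4132$ pattern is to use the deficiency structure: choose the $321$ occurrence with $b$ even. Since $\pi(b)<b$, the entry $b$ itself appears somewhere, and one can track how the values $b, b+1,\dots$ are placed to force an entry of intermediate value to appear after position $a$ but before the small entries, giving the ``132'' tail after the ``4''.

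The key steps, in order, would be: (1) state and invoke the trivial inclusion giving ``$\ge$'' is automatic and reduce everything to showing $\mathfrak{D}^2_{2n}(4132)\subseteq\mathfrak{D}^2_{2n}(321)$; (2) take $\pi\in\mathfrak{D}^2_{2n}$ containing $321$ and, among all occurrences, select an extremal one — say with the leftmost possible ``$3$'' (largest entry) $\pi(a)$, or with $\pi(a)$ as small as possible — to get a canonical occurrence to work with; (3) use the Dumont-2 constraints on positions $a,b,c$ (in particular, at least one of them, or a nearby position, must be even and hence carry a deficiency) to locate a fourth entry of the correct relative value and position completing a $4132$; (4) derive a contradiction, concluding $\pi$ avoids $321$; (5) combine with $|\mathfrak{D}^2_{2n}(321)|=C_n$. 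The main obstacle I anticipate is step (3): pinning down exactly which auxiliary entry always exists and proving it lies in the correct position–value window requires a careful case analysis on the parities of $a,b,c$ and on whether the relevant positions are excedances or deficiencies; the extremal choice in step (2) is what should make this case analysis collapse, but getting the right extremal condition — one strong enough to forbid all escape routes yet consistent with the Dumont-2 structure — is the delicate part.
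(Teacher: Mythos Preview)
The paper does not actually prove this theorem; it is quoted from \cite{Burstein_Dyck}, and the only argument the paper supplies is the one-line remark preceding the statement: since $321$ is a subpattern of $4132$, we have $\mathfrak{D}^2_{2n}(321)\subseteq\mathfrak{D}^2_{2n}(4132)$, and the cited reference shows the two sets in fact coincide. So your overall framing---reduce to showing $\mathfrak{D}^2_{2n}(4132)\subseteq\mathfrak{D}^2_{2n}(321)$ and then invoke $|\mathfrak{D}^2_{2n}(321)|=C_n$---matches exactly what the paper indicates.

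That said, what you have written is a plan, not a proof, and the gap is precisely the part you yourself flag. Step~(3) is the entire content of the result, and you do not carry it out: you float two incompatible ideas (extend the ``4'' to the left, versus insert an intermediate value to produce the ``132'' tail), discard the first correctly, and then leave the second as ``requires a careful case analysis'' whose extremal hypothesis you never specify. Concretely, to turn a $321$ occurrence $\pi(a)>\pi(b)>\pi(c)$ into a $4132$ you would need a position $d$ with $a<d<b$ and $\pi(d)<\pi(c)$ (then $(\pi(a),\pi(d),\pi(b),\pi(c))$ is a $4132$), and the Dumont-2 inequalities $\pi(2i)<2i$, $\pi(2i-1)\ge 2i-1$ must be what forces such a $d$ to exist for a well-chosen occurrence. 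You gesture at this but never pin down which extremal choice of $(a,b,c)$ makes the argument go through, nor why the parity constraints guarantee the required $d$. Until that step is actually written out, the proposal does not establish the inclusion and hence does not prove the theorem.
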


\begin{theorem}[\cite{Burstein_Dyck}]
For all $n\ge 0$,
\[
|\mathfrak{D}^2_{2n} (2143)| = a_n a_{n+1},
\]
where
\[
a_{2m} = \frac{1}{2m+1} \binom{3m}{m} \qquad \text{and} \qquad a_{2m+1} = \frac{1}{m+1} \binom{3m+1}{m}.
\]
\end{theorem}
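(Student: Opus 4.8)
The plan is to determine the structure of a $2143$-avoiding Dumont-2 permutation precisely enough to read off a recursive decomposition, and then to identify the resulting generating function with a product of Fuss--Catalan series. The basic observation is that $2143 = 21\oplus 21$, so a permutation contains $2143$ exactly when it has two inversions, the second occurring strictly to the right of and strictly above the first. Against this, the rigid parity structure of $\mathfrak{D}^2_{2n}$ — even positions are deficiencies, odd positions are fixed points or excedances, and in particular $\pi(2)=1$ — is a strong constraint, and the two facts together should pin the permutations down almost completely.

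A natural place to start is to condition on $\pi(1)=j$. For $n\ge 1$ we have $j>1$, so positions $1$ and $2$ already form an inversion, which forbids any inversion among the later positions whose two entries both exceed $j$. Since every entry larger than $j$ lies at a position $\ge 3$, this forces the entries $j+1,j+2,\dots,2n$ to appear in increasing order in $\pi$; what remains free is the arrangement of the values $\{2,\dots,j-1\}$ among the positions $\ge 3$ not used by the increasing run, subject to the Dumont-2 conditions and to avoidance of the $2143$'s that do not use $(\pi(1),\pi(2))$ as their ``$21$''. Iterating this analysis on the residual small-value configuration, I expect to obtain a canonical decomposition of each $\pi\in\mathfrak{D}^2_{2n}(2143)$ into two independent blocks — an increasing ``staircase'' carrying the large values and a smaller Dumont-2-type configuration below it — which are freely recombinable in the sense that no recombination creates a forbidden pattern. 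The parity bookkeeping should be what forces the two blocks to have \emph{consecutive} sizes $n$ and $n+1$, which is the combinatorial origin of the product $a_na_{n+1}$; equivalently, in the Dyck-path / noncrossing-partition language of \cite{Burstein_Dyck}, one expects a bijection from $\mathfrak{D}^2_{2n}(2143)$ to a pair of lattice-path objects of sizes differing by one, refining the bijection behind Theorem~\ref{3142_thm}.

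The algebraic side is routine. The companion series satisfy
\[
\sum_{m\ge0}a_{2m}\,x^{m}=T(x),\qquad \sum_{m\ge0}a_{2m+1}\,x^{m}=T(x)^{2},\qquad T=1+xT^{3},
\]
since $a_{2m}=\tfrac{1}{3m+1}\binom{3m+1}{m}$ and $a_{2m+1}=\tfrac{2}{3m+2}\binom{3m+2}{m}$ are the Raney numbers $[x^m]T$ and $[x^m]T^{2}$, as one checks by Lagrange inversion. Splitting $F(x)=\sum_{n\ge0} a_na_{n+1}x^n$ by the parity of $n$ and writing each half through $T$ then gives a closed form; one then either verifies that $F$ satisfies the functional equation produced by the decomposition, or — cleaner, if it can be pushed through — arranges the decomposition so that the two blocks are literally counted by ternary trees (equivalently, by the Dyck paths or noncrossing partitions of \cite{Burstein_Dyck}) of the complementary sizes $m$ and $m$, resp.\ $m$ and $m+1$, when $n=2m$, resp.\ $n=2m+1$.

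The main obstacle is the structural step of the second paragraph: isolating the canonical split point, and proving — with the Dumont-2 parity constraints in force throughout — both that the two blocks can be reattached in every combination without producing a $2143$, and that their sizes are necessarily consecutive rather than, say, equal. Everything downstream — the functional equation, the Raney-number identities, and the final matching of series — is mechanical.
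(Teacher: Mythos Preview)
The paper does not itself prove this theorem: it is quoted, without argument, from \cite{Burstein_Dyck} as background in Section~\ref{length four_thms}. So there is no in-paper proof to compare against; the comparison would have to be with the argument in \cite{Burstein_Dyck}, whose title (``Restricted Dumont permutations, Dyck paths and noncrossing partitions'') already indicates the bijective machinery used there, and which you correctly gesture at.

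Your outline is a reasonable starting point. The first structural observation is correct and useful: since $\pi(2)=1$ and $\pi(1)=j\ge 2$ for $n\ge 1$, any inversion among positions $\ge 3$ whose two entries both exceed $j$ completes a $2143$ with the prefix $(j,1)$, so the values $j+1,\dots,2n$ must appear in increasing order. The algebraic identifications of $a_{2m}$ and $a_{2m+1}$ with the Raney numbers $[x^m]T$ and $[x^m]T^{2}$ for $T=1+xT^{3}$ are also correct.

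But what you have is a plan, not a proof, and you say as much: the entire content of the theorem lies in the step you label ``the main obstacle.'' You assert --- without establishing --- that iterating the decomposition yields two \emph{independent} blocks, that they recombine freely without producing a $2143$, and that their sizes are forced to be exactly $n$ and $n+1$. None of this is evident. After fixing $\pi(1)=j$, the small values $\{2,\dots,j-1\}$ and the increasing run of large values are interleaved in positions $\ge 3$; a large value to the left of a small one is an inversion that can serve as the ``$43$'' of a fresh $2143$ (e.g.\ $\pi=4\,1\,5\,2\,6\,3\in\mathfrak{D}^2_6$ has large values increasing but contains $4152$ as a $2143$). Controlling these cross-block interactions, together with the Dumont-2 parity constraints at every position, is the real work, and it is also what forces the specific size split. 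Until that structural step is actually carried out --- including a proof that the split sizes are consecutive --- the proposal has a genuine gap at its center.
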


\vspace{0.2in}

To date, there are no other enumerations of Dumont-1 and Dumont-2 permutations avoiding a single pattern of length four. However, we conjecture a Wilf-equivalence on Dumont-1 permutations of patterns $2143$ and $3421$ (see Section~\ref {B-J Conjecture}).

Now, we will consider simultaneous avoidance of a pair of patterns. We begin with Dumont-1 permutations.

\begin{theorem}[\cite{Burstein_Dyck}]
For all $n\ge 0$,
\[
|\mathfrak{D}^1_{2n} (1342, 1423)| = |\mathfrak{D}^1_{2n} (2341, 2413)| = |\mathfrak{D}^1_{2n} (1342, 2413)| =s_{n+1},
\]
where $s_n$ is the $n$-th little Schr\"oder number (\cite[A001003]{OEIS}).
\end{theorem}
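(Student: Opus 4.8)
The plan is to determine, for each of the three pattern pairs $T$, the generating function $F_T(x)=\sum_{n\ge 0}|\mathfrak{D}^1_{2n}(T)|\,x^n$ by showing that it satisfies
$$F_T \;=\; 1 \;+\; x F_T \;+\; 2x F_T (F_T-1),$$
equivalently $2xF_T^2-(x+1)F_T+1=0$. The unique power series solution of this quadratic is $F_T(x)=\frac{1+x-\sqrt{1-6x+x^2}}{4x}$, whose coefficients are the little Schr\"oder numbers, so $|\mathfrak{D}^1_{2n}(T)|=s_{n+1}$; the cases $n\le 2$, where every element of $\mathfrak{D}^1_{2n}$ avoids $T$, already give $1,1,3$ and fix the indexing.

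The decomposition is obtained by cutting at the largest entry. Fix $\pi\in\mathfrak{D}^1_{2n}$, $n\ge 1$. Since $2n$ is even it is a descent top, hence not the last entry; and since $2n-1$ is odd it is immediately followed by a larger entry or ends $\pi$, and the only entry larger than $2n-1$ is $2n$. So either $2n-1$ immediately precedes $2n$ --- Case I, $\pi=\alpha\,(2n{-}1)\,(2n)\,\beta$ with $\beta$ nonempty --- or $2n-1$ is the last entry --- Case II, $\pi=\delta\,(2n)\,\epsilon\,(2n{-}1)$ with $\epsilon$ possibly empty. In each case I would analyse the two side-words. Avoidance of $T$ first forces a \emph{layering}: a side entry lying on the ``wrong'' side of a side entry across the cut would, together with the inserted pair $\{2n-1,2n\}$, create a forbidden $1342$, $1423$, $2341$ or $2413$ (which pattern, and hence the direction of the layering, depends on $T$ and on the case), so all values of one side exceed all values of the other. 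The Dumont-1 constraints then pin the parities: the minimum of the higher of the two value-intervals is never followed by a smaller value --- none exists on its side, and across the cut it is followed by a larger entry or is the final entry of $\pi$ --- so it is not a descent top, hence is odd; therefore both intervals have even length, and order-isomorphic relabeling carries each side to a Dumont-1 permutation of smaller even size. A short case check on occurrences straddling $2n$ should show conversely that, once the layering holds and both relabeled sides avoid $T$, so does $\pi$.

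Collecting the contributions yields the displayed equation identically for all three $T$: the empty permutation contributes $1$; the subcase of Case II with $\epsilon$ empty (so $\pi=\delta\,(2n)\,(2n{-}1)$ with $\delta$ an arbitrary, possibly empty, avoider) contributes $xF_T$; the two subcases in which the outer side-word is empty while the other is forced nonempty --- namely $\pi=(2n{-}1)\,(2n)\,\beta$ in Case I and $\pi=(2n)\,\epsilon\,(2n{-}1)$ with $\epsilon$ nonempty in Case II --- each contribute $x(F_T-1)$; and the two subcases with both side-words nonempty each contribute $x(F_T-1)^2$. Adding, $F_T = 1 + xF_T + 2x(F_T-1) + 2x(F_T-1)^2 = 1+xF_T+2xF_T(F_T-1)$. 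Since the recursion is the same for all three $T$, one could alternatively read off explicit size-preserving bijections among the three classes from the decomposition and carry out the generating-function computation only once.

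The main obstacle I anticipate is establishing the layering condition. The three pairs are not related by any symmetry of the square that preserves the Dumont-1 class, so both directions --- producing the forbidden pattern when layering fails and ruling out every straddling occurrence when it holds --- must be done separately for each pair, and the three different obstructions must be checked to yield the same recursion. A second delicate point is the parity bookkeeping: one must verify that each value-interval has even length, so that relabeling sends a Dumont-1 piece to a Dumont-1 piece rather than to a parity-swapped variant, and one must handle with care the degenerate configurations in which $\alpha$, $\delta$ or $\epsilon$ is empty, since it is precisely those terms that assemble into the linear part $xF_T+2x(F_T-1)$ of the equation.
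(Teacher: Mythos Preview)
The theorem is stated in the present paper only as a citation to Burstein--Elizalde--Mansour \cite{Burstein_Dyck}; no proof is reproduced here, so there is no ``paper's own proof'' against which to compare your argument.

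Evaluating your proposal on its merits: the plan is sound and, once the case checks are written out, yields a correct proof. The decomposition by the positions of $2n$ and $2n-1$ is the natural one for Dumont-1 permutations, and for each pair $T$ the obstruction you identify is genuine: in Case~I the subsequence $(a,2n{-}1,2n,b)$ has pattern $1342$ or $2341$ according as $a<b$ or $a>b$, and in Case~II the subsequence $(d,2n,e,2n{-}1)$ has pattern $1423$ or $2413$ according as $d<e$ or $d>e$; for each $T$ exactly one of the two is forbidden in each case, which forces the layering (with direction depending on $T$, as you note). The parity argument is also correct: in every case the minimum $m+1$ of the upper interval is never a descent top, hence odd, so $m$ is even and both relabeled pieces are Dumont-1. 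The converse---that no occurrence of a pattern in $T$ can straddle once the layering holds---is the most tedious part but is a routine finite check, and your accounting of the five nonempty subcases correctly assembles to $xF_T+2x(F_T-1)+2x(F_T-1)^2$, giving $2xF_T^2-(x+1)F_T+1=0$ and hence the little Schr\"oder generating function.

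One small point to be careful about when you write it out: in Case~I with $\alpha$ nonempty (respectively Case~II with the outer word nonempty), the layering direction differs among the three pairs, so the relabeling shift is applied to a different side in each; this does not affect the count but should be stated explicitly so that the bijection is well defined in each case.
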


Note that $(s_n)=(1, 1, 3, 11, 45, 197, 903, \dots)$ is recursively given by $s_1 = 1$ and
\[
s_{n+1} = -s_n + 2 \sum_{k=1}^n s_k s_{n-k}, \quad n\ge 2,
\]
with the generating function
\[
s(x) = \sum_{n=1}^\infty s_n x^n = \frac{1+x-\sqrt{1-6x+x^2}}{4}.
\]

\begin{theorem}[\cite{Burstein_Dyck}] 
For all $n\ge 1$,
\[
|\mathfrak{D}^1_{2n} (231, 4213)| = 1, \quad \text{namely,} \quad \mathfrak{D}^1_{2n} (231, 4213) = \{(2, 1, 4, 3, \dots 2n, 2n-1)\}.
\]
\end{theorem}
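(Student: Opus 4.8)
The plan is to induct on $n$. The base case $n=1$ is immediate: $\mathfrak{D}^1_2=\{21\}$ and $21$ vacuously avoids both patterns, so it equals $\{\sigma_2\}$ where I write $\sigma_{2n}=(2,1,4,3,\dots,2n,2n-1)$ for the claimed unique element. That $\sigma_{2n}\in\mathfrak{D}^1_{2n}$ is clear (each even entry $2j$ sits in an odd position immediately before the smaller entry $2j-1$, and each odd entry is immediately before a larger entry or is the last entry), and $\sigma_{2n}$ avoids $321$ since its only descents occur inside the consecutive blocks $2j,2j-1$ while distinct blocks are increasing; hence $\sigma_{2n}$ avoids $231$ and also $4213$ (which contains $321$). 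So the whole content is to show $\sigma_{2n}$ is the \emph{only} element; fix $\pi\in\mathfrak{D}^1_{2n}(231,4213)$ with $n\ge 2$.

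The heart of the argument is to prove $\pi(1)=2$, which then forces $\pi(2)=1$ because in a Dumont-1 permutation the even entry $2$ must be immediately followed by the only smaller entry, $1$. That same remark already gives $\pi(1)\neq 1$, since $1$ cannot occupy position $1$. Next I would show the first descent of $\pi$ (which exists, e.g.\ at the position of the even entry $2n$) occurs at position $1$: if instead the first descent were at some position $i\ge 2$, then $\pi(1)<\pi(2)$, and since $\pi(1)\neq 1$ the entry $1$ cannot be in position $1$ or $2$, so it lies at a position $\ge 3$, producing a $231$ pattern $\pi(1),\pi(2),1$ — contradiction. Hence $\pi(1)>\pi(2)$, and a Dumont-1 descent top is even, so $\pi(1)$ is even. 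It remains to rule out $\pi(1)=2k$ with $k\ge 2$. Here I would track the positions of the entries $2$, $1$, and $2k-1$: writing $p,p+1$ for the positions of $2,1$, any value $v$ with $1<v<2k$ lying after position $p+1$ would give a $4213$ pattern $2k,2,1,v$, so all of $3,4,\dots,2k-1$ lie strictly left of position $p$, forcing $p\ge 2k-1$. In particular $2k-1$ sits at some position $q\le p-1$; being odd it is immediately followed by a larger entry, which must be $\ge 2k+1$ (it cannot equal $2k=\pi(1)$). If $k=n$ no such entry exists, a contradiction; otherwise that entry, together with $\pi(1)=2k$ and the entry $2$ at the strictly later position $p$, is a $231$ pattern. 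A symmetric one-line argument (using $\pi(3)\ge 2k+1$ and $p\ge 4$) disposes of the sub-case $\pi(2)=2k-1$, where $q=2$.

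With $\pi(1)=2$ and $\pi(2)=1$ in hand, I would finish by deleting these two entries. Let $\tau$ be $\pi$ restricted to positions $3,\dots,2n$ (a word on the values $3,\dots,2n$) and $\widehat\tau$ its reduction to $[2n-2]$, obtained by subtracting $2$ — an operation that preserves parities. The Dumont-1 conditions of $\pi$ at positions $3,\dots,2n$ become exactly the Dumont-1 conditions defining $\mathfrak{D}^1_{2n-2}$ for $\widehat\tau$, and the condition linking position $2$ to position $3$ is vacuous because $\pi(2)=1$; thus $\widehat\tau\in\mathfrak{D}^1_{2n-2}$. Moreover $\widehat\tau$ avoids $231$ and $4213$ since $\tau$ is a factor of $\pi$. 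By the induction hypothesis $\widehat\tau=\sigma_{2n-2}$, so $\tau=(4,3,6,5,\dots,2n,2n-1)$ and $\pi=\sigma_{2n}$, completing the induction.

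The main obstacle is the step $\pi(1)\neq 2k$ for $k\ge 2$: one must juggle the positions of several small entries ($1$, $2$, $2k-1$, and in one sub-case $\pi(2)$ itself) while controlling the boundary degeneracies — when such an entry would be the last entry, and especially when $k=n$, so that no entry exceeds $\pi(1)$ to serve as the ``$3$'' of a $231$. The positional bound $p\ge 2k-1$ for the entry $2$, squeezed out of $4213$-avoidance, is precisely what spreads the relevant entries far enough apart for the forbidden $231$ to materialize.
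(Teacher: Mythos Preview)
The paper does not prove this theorem; it is merely quoted there as a result from \cite{Burstein_Dyck}, so there is no in-paper proof to compare your argument against. On its own merits your induction is correct.

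Two minor remarks. First, the phrase ``$\sigma_{2n}$ avoids $321$ \dots; hence $\sigma_{2n}$ avoids $231$'' is not valid as written (the permutation $231$ itself avoids $321$); what actually gives $231$-avoidance is the block structure you already described: an ascent $\sigma_{2n}(i)<\sigma_{2n}(j)$ forces $j$ into a strictly later $2$-block than $i$, whence every later entry exceeds $\sigma_{2n}(i)$. Second, your separate ``sub-case $\pi(2)=2k-1$, where $q=2$'' is unnecessary: the general step already yields $\pi(q+1)\ge 2k+1$, and since $\pi(p)=2$ this forces $q+1\ne p$, so $q+1<p$ and the $231$ pattern $(\pi(1),\pi(q+1),\pi(p))=(2k,\pi(q+1),2)$ at positions $1<q+1<p$ goes through uniformly. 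Neither point affects correctness.
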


\begin{theorem}[\cite{Burstein_Dyck}] 
For all $n\ge 1$, $|\mathfrak{D}^1_{2n} (1342, 4213)| = 2^{n-1}$.
\end{theorem}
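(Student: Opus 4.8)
The plan is to peel off a distinguished two-element block and reduce to a linear recurrence. Write $d_n := |\mathfrak{D}^1_{2n}(1342,4213)|$. I will show that
\[
d_n = \sum_{k=0}^{n-1} d_k, \qquad d_0 = 1,
\]
from which $d_1 = 1$ and $d_n = 2d_{n-1}$ for $n\ge 2$, hence $d_n = 2^{n-1}$.

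The starting observation is that the value $2$ is even, so in any Dumont-1 permutation it is immediately followed by a smaller value, i.e.\ by $1$; thus ``$21$'' occurs as a consecutive block, say in positions $j,j+1$. If $\pi$ also avoids $4213$, then every value $a$ occurring to the left of this block is smaller than every value $b$ occurring to the right of it: otherwise $a>b$ (with $a,b\ge 3$) would make $(a,2,1,b)$ an occurrence of $4213$. Hence positions $1,\dots,j+1$ carry the values $\{1,\dots,j+1\}$ and positions $j+2,\dots,2n$ carry $\{j+2,\dots,2n\}$, so $\pi = \sigma\oplus\tau'$ splits as a direct sum. A short parity argument forces $j$ to be odd: if $j+1$ were odd, then $j+1$, being the largest value among the first $j+1$ positions and an odd value that is not the final entry of $\pi$, would have to be immediately followed by a larger value, which is impossible unless it sits in position $j+1$ as a fixed point --- but that position holds the value $1$. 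Consequently $\sigma\in\mathfrak{D}^1_{j+1}$ and (after subtracting $j+1$) $\tau'\in\mathfrak{D}^1_{2n-j-1}$ are again genuine Dumont-1 permutations, with $\sigma$ ending in ``$21$'' and $\tau'$ living on the top values.

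Next I would record how the two forbidden patterns interact with sums. Since $1342$ contains $231$ while $4213$ does not, a routine analysis of where a straddling occurrence of $1342$ or $4213$ can sit yields: $\sigma\oplus\tau'$ avoids $\{1342,4213\}$ iff $\sigma$ and $\tau'$ both do and, $\sigma$ being nonempty, $\tau'$ avoids $231$; and $\alpha\ominus(2,1)$ avoids $\{1342,4213\}$ iff $\alpha$ does. Applying the first statement to $\pi=\sigma\oplus\tau'$ gives $\tau'\in\mathfrak{D}^1_{2n-j-1}(231,4213)$, which by the theorem immediately preceding the present one consists of a single permutation (the empty one when $j=2n-1$); so $\tau'$ is determined by $j$. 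For the factor $\sigma$: a permutation in $\mathfrak{D}^1_{2i}(1342,4213)$ ending in ``$21$'' is exactly $\alpha\ominus(2,1)$, where $\alpha$ (after relabeling onto $[2i-2]$) is a permutation satisfying the Dumont-1 conditions except that its final entry is even rather than odd --- equivalently, $\alpha$ is the complement of a Dumont-1 permutation of $[2i-2]$ --- and, by the skew-sum statement, $\alpha$ avoids $\{1342,4213\}$. Complementation is an involution carrying such $\alpha$ bijectively onto $\mathfrak{D}^1_{2(i-1)}(1342,4213)$: it turns the seam-forced ``final entry even'' back into ``final entry odd'' (and matches up the remaining Dumont-1 conditions, which keep the same form), while the pattern set $\{1342,4213\}$ is stable under complement since $\overline{1342}=4213$. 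Hence for $j=2i-1$ the number of admissible $\sigma$ equals $d_{i-1}$, and summing over $i=1,\dots,n$ gives $d_n=\sum_{k=0}^{n-1}d_k$. One must also verify the converse --- that each choice of odd $j$, the forced $\tau'$, and an admissible $\alpha$ reassembles into a genuine member of $\mathfrak{D}^1_{2n}(1342,4213)$ --- which follows by running the two sum lemmas and the Dumont-1 ``seam'' conditions backwards.

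The technical heart of the argument is twofold: that avoidance of $4213$ alone already pins down the global shape (the ``$21$''-block direct-sum decomposition, together with the parity of its location), and that the ``lower'' summand $\sigma$ re-enters the same counting problem, via complementation, because complement fixes the pattern set $\{1342,4213\}$ and interchanges ``final entry odd'' with ``final entry even'' in the Dumont-1 conditions. I expect the bookkeeping around the $\ominus(2,1)$/complement step and the case checks for straddling occurrences to require the most care; once these are in hand, the recursion and the closed form $2^{n-1}$ follow immediately.
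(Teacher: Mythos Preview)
The paper does not actually prove this theorem; it is quoted from \cite{Burstein_Dyck} as background in Section~\ref{length four_thms}, with no argument given. So there is no in-paper proof to compare your approach against.

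On its own merits, your argument is correct. The decomposition around the forced consecutive block ``$21$'' works: $4213$-avoidance forces the direct-sum split $\pi=\sigma\oplus\tau'$ at that block, and your parity argument that $|\sigma|=j+1$ is even is fine (more briefly: if $j+1$ were odd, the odd value $j+1$ sits at some position $\le j-1$ and is followed by a value $\le j+1$, hence smaller, violating the Dumont-1 rule). The straddling analysis is right: $4213$ never straddles a direct sum, while $1342$ straddles if and only if the upper summand contains $231$; hence $\tau'\in\mathfrak{D}^1_{2(n-i)}(231,4213)$ is the unique permutation $2,1,4,3,\dots$ by the preceding theorem. Your handling of $\sigma=(\alpha+2,2,1)$ is also correct: the Dumont-1 conditions on $\sigma$ translate exactly to ``$\alpha$ is the complement of a Dumont-1 permutation on $[2i-2]$'' (since adding $2$ preserves parity and the seam forces $\alpha(2i-2)$ even), and because $\{1342,4213\}$ is closed under complement the admissible $\sigma$ are counted by $d_{i-1}$. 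The recursion $d_n=\sum_{k=0}^{n-1}d_k$ with $d_0=1$ and the closed form $d_n=2^{n-1}$ follow. The converse reassembly also checks out: the direct sum of the chosen $\sigma$ and $\tau'$ is Dumont-1 (the seam has the odd value $1$ followed by $2i+2>1$) and avoids both patterns by the same straddling lemmas.
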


\begin{theorem}[\cite{Burstein_Dyck}] 
For all $n\ge 3$,
$|\mathfrak{D}^1_{2n} (2341, 1423)| = b_n$, where $b_n$ satisfies the recurrence relation 
\[
b_n = 3b_{n-1} + 2b_{n-2} \quad \text{for} \quad n \ge 3, \quad \text{with} \quad b_0 = 1, b_1 = 1, b_2 = 3.
\]
\end{theorem}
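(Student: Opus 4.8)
The plan is to set up a structural decomposition of a permutation $\pi \in \mathfrak{D}^1_{2n}(2341, 1423)$ according to the position and value of the largest entry $2n$ and of the block of large entries near it. Since $\pi$ is a Dumont-1 permutation, the last entry $\pi(2n)$ is odd, and $2n$ is an even entry, so $2n$ is a descent top: it is immediately followed by a smaller entry. First I would analyze where $2n$ can sit. Write $\pi = \alpha \, (2n) \, \beta$. Avoidance of $2341$ forces a strong restriction on $\alpha$: if $\alpha$ contained an occurrence of $231$ whose ``$1$'' lies to the left of $2n$, then together with $2n$ we would get $2341$; similarly, avoidance of $1423$ with $2n$ playing the role of ``$4$'' restricts how entries of $\alpha$ and $\beta$ can interleave around $2n$. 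The goal of this first step is to show that $\alpha$ and $\beta$ each decompose into a controlled number of sub-permutations, each of which is again (after rescaling) a Dumont-1 permutation avoiding the same two patterns, plus a bounded ``remainder'' that accounts for the coefficients $3$ and $2$.

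Next I would make the Dumont-1 constraint interact with the pattern constraints at the level of consecutive pairs $(2k, \text{something smaller})$ and $(2k-1, \text{something larger})$. The key observation to extract is that in such doubly-restricted permutations the even-odd adjacency forces the large entries to appear in a near-sorted ``staircase'' of adjacent transposed pairs $(2j,\ 2j-1)$ or in one of a few small exceptional configurations; this is exactly the phenomenon already visible in Theorems on $\mathfrak{D}^1_{2n}(321)$ and $\mathfrak{D}^1_{2n}(123)$ cited above, where the restricted sets collapse to size $1$ or $4$. Here the two length-$4$ patterns are weaker, so instead of collapsing completely, removing the top pair $\{2n, 2n-1\}$ (or the top block) leaves a permutation in $\mathfrak{D}^1_{2(n-1)}(2341,1423)$ in $3$ essentially different ways (depending on the relative order of $2n-1$, $2n-2$, and the entry following $2n$), while certain configurations force the removal of the top \emph{two} pairs at once, contributing the $2b_{n-2}$ term. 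I would make this precise by a case split on $\pi(2n)$ and on $\pi^{-1}(2n-1)$, verifying in each case that the pattern-avoidance conditions on the smaller permutation are equivalent to those on $\pi$.

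Then I would assemble the bijection: partition $\mathfrak{D}^1_{2n}(2341,1423)$ into the pieces identified above, exhibit explicit size-preserving maps to three disjoint copies of $\mathfrak{D}^1_{2(n-1)}(2341,1423)$ and two disjoint copies of $\mathfrak{D}^1_{2(n-2)}(2341,1423)$, and check these maps are bijections by describing the inverse (reinsert the top pair, or the top two pairs, in the prescribed way, and verify the result lies in the class). Finally I would compute the base cases $b_0 = 1$, $b_1 = 1$, $b_2 = 3$ directly by listing: $\mathfrak{D}^1_0 = \{\varepsilon\}$, $\mathfrak{D}^1_2 = \{21\}$, and $\mathfrak{D}^1_4(2341,1423)$ has exactly $3$ elements among the elements of $\mathfrak{D}^1_4$. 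The main obstacle I anticipate is the combinatorial bookkeeping in the case analysis for the $+2b_{n-2}$ term: isolating exactly which adjacency patterns of the top two pairs are \emph{forced} (so that one cannot peel off a single pair and stay in the class, but must peel off two) requires carefully tracking how occurrences of $2341$ and $1423$ are created or destroyed when the penultimate large pair is reinserted, and ensuring the five families are genuinely disjoint and exhaustive.
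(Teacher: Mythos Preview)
The paper does not actually prove this theorem. It appears in the survey portion (the subsection on patterns of length four in Dumont-1 and Dumont-2 permutations) with attribution to \cite{Burstein_Dyck}, and no argument is supplied in the present paper. So there is no ``paper's own proof'' here against which to compare your attempt.

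As for your proposal on its own merits: it is a plausible outline, but it is not yet a proof. You correctly observe that $2n$ is a descent top in any Dumont-1 permutation and that the two forbidden patterns constrain what can sit on either side of it, and the broad strategy of peeling off the top one or two value-pairs to derive a linear recurrence is the natural one for a relation of the shape $b_n = 3b_{n-1} + 2b_{n-2}$. However, the substance of the argument --- the concrete case analysis showing that precisely three configurations reduce to an element of $\mathfrak{D}^1_{2(n-1)}(2341,1423)$ and precisely two reduce to an element of $\mathfrak{D}^1_{2(n-2)}(2341,1423)$, with the five families disjoint and exhaustive --- is announced but not carried out. You have not said \emph{which} arrangements of $2n$, $2n-1$, $2n-2$ (and their immediate neighbours) constitute the three one-pair cases versus the two two-pair cases, nor have you checked that avoidance of $2341$ and $1423$ is preserved and reflected under each of the proposed insertions and deletions. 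Since you yourself identify this bookkeeping as the main obstacle, what you have at the moment is a proof plan rather than a proof; to complete it you would need to write down the five explicit maps and verify them, or else consult the original argument in \cite{Burstein_Dyck}.
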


Note that the sequence $(b_n)=(1, 1, 3, 11, 39, 139, 495, \dots)$ is \cite[A007482]{OEIS} shifted one position to the right. In other words, $\mathfrak{D}^1_{2n} (2341, 1423)$ is equinumerous to the set of of subsets of $[2n-2]$ where each odd element $m$ has an even neighbor ($m-1$ or $m+1$).




\section{Avoidance on Dumont permutations of the fourth kind}

In this section, we will consider pattern avoidance on Dumont-4 permutations. In previous pattern-avoidance literature, the first nontrivial cases to be analyzed were patterns of length $3$. Since $1$ cannot be a deficiency in a Dumont-4 permutation, it must be a fixed point, so all Dumont-4 permutations start with $1$. Thus, we will also consider avoiding patterns $\pi= (1,\pi'+1)$, where $\pi'$ is a permutation in $S_3$, i.e. $\pi\in\{1234, 1243, 1324, 1342, 1423, 1432\}$. Note that in all but the first two cases, i.e. if $\pi'$ does not start with $1$, we have 
\[
\mathfrak{D}^4_{2n}(\pi)=\mathfrak{D}^4_{2n}(\pi').
\]

\subsection{Enumerating Dumont-4 permutations avoiding Dumont-4 permutations of length four}

We will begin by considering Dumont-4 permutations avoiding patterns of length $4$, which are themselves Dumont-4 permutations, that is $\pi\in\{1234, 1342, 1432\}$.

Then in the next section we will consider the remaining cases, where $\pi\in\{1243, 1324, 1423\}$.

\subsubsection{$\mathfrak{D}^4_{2n}(1234)$}

\begin{theorem}
$|\mathfrak{D}^4_{2n}(1234)| = 0$ \ for $n \ge 4$.
\end{theorem}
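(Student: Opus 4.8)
The plan is to show that any Dumont-4 permutation of size $2n$ with $n$ large enough must contain an increasing subsequence of length $4$, which forces $|\mathfrak{D}^4_{2n}(1234)| = 0$ for $n \ge 4$. The starting observation is structural: by definition, every deficiency of a Dumont-4 permutation occurs at an even position with an even value, so in particular every entry at an odd position is a fixed point or an excedance. First I would record the basic consequences of this: $\pi(1) = 1$ always (since $1$ cannot be a deficiency), and more generally $\pi(2i-1) \ge 2i-1$ for every $i$. So the entries $\pi(1), \pi(3), \pi(5), \dots, \pi(2n-1)$ at the $n$ odd positions are all weakly above the diagonal. The heart of the argument will be to locate a long increasing subsequence among (most of) these odd-position entries together with a few well-chosen entries elsewhere.

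The key step is a counting/pigeonhole argument on the odd-position entries. Consider the $n$ values $\pi(1) < ?$ at positions $1, 3, \dots, 2n-1$; these are $n$ distinct values, each satisfying $\pi(2i-1) \ge 2i-1$. I would argue that among these $n$ entries one cannot have too long a decreasing run, because a decreasing subsequence sitting above the diagonal is severely constrained: if $\pi(2i_1 - 1) > \pi(2i_2 - 1) > \pi(2i_3 - 1)$ with $i_1 < i_2 < i_3$, then since $\pi(2i_3 - 1) \ge 2i_3 - 1$ we'd need $\pi(2i_1 - 1) \ge 2i_3 + 1$, i.e. the values are pushed far above the diagonal, and iterating this pins down how few entries can be "used up" in this way. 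Combined with the Erdős–Szekeres-type dichotomy (a sequence of $n$ numbers with no increasing subsequence of length $4$ has a decreasing subsequence of length $\lceil n/3 \rceil$), one concludes that for $n$ sufficiently large the odd-position entries already contain an increasing subsequence of length $4$. I expect this to pin down $n \ge 4$ exactly after checking the small cases $n = 1, 2, 3$ by hand (or by noting those sets are nonempty, e.g. exhibiting explicit Dumont-4 permutations of sizes $2,4,6$ avoiding $1234$), so that the threshold in the statement is sharp.

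The main obstacle will be making the "decreasing subsequences above the diagonal are short" estimate precise enough to hit the threshold $n = 4$ rather than some larger constant; the crude Erdős–Szekeres bound alone gives a larger bound, so I would need the sharper geometric observation that a value at an odd position that lies on a decreasing run is forced to be large, which rapidly exhausts the available values $\{1, \dots, 2n\}$. An alternative, possibly cleaner route — and the one I would try first — is a direct case analysis: since $\pi(1) = 1$, any $1234$-avoider must avoid $123$ in positions $2, \dots, 2n$ with values $2, \dots, 2n$; I would then use the Dumont-4 condition on those remaining $2n-1$ positions to show no such permutation exists once $n \ge 4$, tracking where $2n$, $2n-1$, etc. can sit given that all odd positions are non-deficiencies. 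Either way the small-case check at $n = 4$ (finitely many candidates) closes the argument.
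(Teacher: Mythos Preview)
Your Erd\H{o}s--Szekeres route has a genuine quantitative gap, not merely a threshold issue. Suppose the odd-position entries $\pi(1),\pi(3),\dots,\pi(2n-1)$ contain a decreasing subsequence at positions $2i_1-1<\cdots<2i_k-1$. Since $\pi(2i_k-1)\ge 2i_k-1\ge 2k-1$, all $k$ of these distinct values lie in $[2k-1,2n]$, forcing $k\le (2n+2)/3$. On the other hand, Erd\H{o}s--Szekeres applied to these $n$ entries, assuming no increasing subsequence of length $4$, only guarantees a decreasing subsequence of length $\lceil n/3\rceil$. Since $\lceil n/3\rceil\le (2n+2)/3$ for every $n$, no contradiction arises --- the argument does not prove the theorem for \emph{any} $n$, let alone pin down $n\ge 4$. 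So this is not a matter of sharpening constants; the approach as stated simply does not close.

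Your alternative route (reduce to $123$-avoidance on positions $2,\dots,2n$ and track large entries) can be made to work, but the paper's argument is far more direct than a case analysis on $2n,2n-1,\dots$. It pins down three specific entries: $\pi(1)=1$; the value $3$, which being odd cannot be a deficiency and hence sits at position $2$ or $3$; and $\pi(2n-1)\in\{2n-1,2n\}$. Among the $2n-5$ values in $[4,2n-2]$, at most one can lie to the left of the entry $3$ (only position $2$ is available besides position $1$) and at most one can lie to the right of position $2n-1$ (only position $2n$). For $n\ge 4$ this leaves at least $2n-7\ge 1$ such values $m$ strictly between the position of $3$ and position $2n-1$, and any such $m$ yields the occurrence $1,3,m,\pi(2n-1)$ of $1234$. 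No Erd\H{o}s--Szekeres, no recursion, no brute-force check at $n=4$: the whole proof is to locate $1$, $3$, and $\pi(2n-1)$ and count what lies between.
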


Note that $|\mathfrak{D}^4_{2n}(1234)| = 1, 1, 2, 4$, for $n=0, 1, 2, 3$, respectively. The Dumont-4 permutations of length at most 6 avoiding pattern 1234 are $\epsilon$, $12$, $1342$, $1432$, $132654$, $136254$, $143265$, $143652$, where $\epsilon$ denotes the empty permutation.

\begin{proof}
Recall that in Dumont-4 permutations, $\pi(1)=1$, and $\pi(2n-1)=2n-1$ or $2n$. In addition to that, for $n \ge 2$, we have $\pi(2)=3$ or $\pi(3)=3$. 

Since $\pi(1)=1$, the three conditions above mean that, of the $2n-5$ entries in $[4, 2n-2]$, at most one is to the left of $3$ and at most one is to the right of $\pi(2n-1)$. This leaves at least $2n-7\ge 1$ entries in $[4,2n-2]$ that are to the right of $3$ and to the left of $\pi(2n-1)$. Any such entry, together with $1$, $3$, and $\pi(2n-1)$ would form an occurrence of pattern $1234$.
\end{proof}

\subsubsection{$\mathfrak{D}^4_{2n}(1342)$}

As noted earlier, since the entry following $1$ is not $2$, it follows that $\mathfrak{D}^4_{2n}(1342)=\mathfrak{D}^4_{2n}(231)$.

\begin{theorem}
\label{1342_theorem}
$|\mathfrak{D}^4_{2n}(1342)| = 2^{n-1}$, \text{for} $n \ge 1$.
\end{theorem}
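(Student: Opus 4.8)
Since the entry immediately following $1$ in any Dumont-4 permutation cannot be $2$, we have reduced to counting $\mathfrak{D}^4_{2n}(231)$. The plan is to analyze the structure imposed on a $231$-avoiding permutation $\pi$ by the Dumont-4 condition (every deficiency is an even value at an even position), and to show that $\pi$ is essentially determined by a sequence of $n-1$ binary choices. First I would recall the well-known structure of $231$-avoiding permutations: if $\pi(k)$ is the position of the largest entry $2n$, then all entries to its left are smaller than all entries to its right, and each of the two blocks is itself $231$-avoiding; equivalently, $231$-avoiders are built from a decreasing sequence of "left-to-right maxima'' each carrying a $231$-avoiding block of smaller entries below it. I would push this down to the level of individual entries: in a $231$-avoider, once we pass the current running maximum, every subsequent entry until the next new maximum must be smaller than everything seen before that block started, which forces a very rigid interleaving.

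The key step is to combine this with the parity constraint. Since $\pi(1)=1$ and, for $n\ge 2$, either $\pi(2)=3$ or $\pi(3)=3$, and in general $\pi(2n-1)\in\{2n-1,2n\}$, one shows inductively that the permutation decomposes into $n-1$ consecutive "slots'' and that in each slot there is exactly one free binary decision — for instance, whether a given adjacent pair of positions $2j-1,2j$ carries the block as an ascent $(2j-1,2j)\mapsto$ (smaller, larger) forming part of the identity-like backbone, or whether the even position $2j$ receives an earlier even value as a deficiency (the only way a deficiency is allowed). I would set up a bijection $\mathfrak{D}^4_{2n}(231)\to\{0,1\}^{n-1}$ by reading off, at each even position $2j$ for $j=1,\dots,n-1$, whether $\pi(2j)$ is a deficiency or not, and prove that the $231$-avoidance together with the Dumont-4 parity condition makes each such choice independent and reconstructible. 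Concretely, I would proceed by induction on $n$, peeling off the behavior at positions $2n-1, 2n$ (or at $1,2,3$ from the left, whichever is cleaner), showing that removing one slot and renormalizing yields an element of $\mathfrak{D}^4_{2n-2}(231)$, and that exactly two elements of $\mathfrak{D}^4_{2n}(231)$ restrict to each element of $\mathfrak{D}^4_{2n-2}(231)$. This gives the recurrence $|\mathfrak{D}^4_{2n}(231)| = 2\,|\mathfrak{D}^4_{2n-2}(231)|$ with initial value $|\mathfrak{D}^4_{2}(231)| = 1$, hence $|\mathfrak{D}^4_{2n}(1342)| = 2^{n-1}$.

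The main obstacle is making the "one binary choice per slot'' claim precise and watertight: I must verify that a deficiency placed at an even position $2j$ is forced (by $231$-avoidance) to be the specific even value it has to be — i.e., that the value is not itself a free parameter — and symmetrically that when position $2j$ is not a deficiency the local pattern is forced. This amounts to checking that $231$-avoidance propagates the constraint "everything after the current maximum and before the next one is below the base of the current block,'' which, combined with the fact that odd positions must host fixed points or excedances, pins down both the values and their order. A secondary subtlety is the base case and the boundary slot at positions $2n-1,2n$, where $\pi(2n-1)$ may equal $2n-1$ or $2n$; I expect this to dovetail with the generic slot analysis but it should be stated separately. Once the slot decomposition is established, the count is immediate. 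As a consistency check, this matches Theorem~\ref{1342_theorem}'s claimed value and agrees with the Dumont-2 result $|\mathfrak{D}^2_{2n}(231)| = 2^{n-1}$ quoted earlier, which is reassuring given Foata's transformation links the two families.
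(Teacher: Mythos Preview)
Your strategy is the paper's strategy: reduce to $231$-avoidance, show the permutation is determined by $n-1$ independent binary choices recording deficiency/non-deficiency at even positions, and count. The paper carries this out via two explicit lemmas --- (i) every odd entry is a fixed point, and (ii) any deficiency at position $2k$ must equal $2k-2$ --- and then reads off the structure of the subpermutation $\pi_e$ on even positions. You correctly flag both of these as the ``main obstacle,'' but they are not obstacles so much as the entire content of the proof; without them the binary-choice claim is unsupported.

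Two concrete slips to fix. First, your indexing is off by one: position $2$ is \emph{never} a deficiency (since $\pi(1)=1$ forces $\pi(2)\ge 2$), so the free bits sit at positions $2j$ for $j=2,\dots,n$, not $j=1,\dots,n-1$. Second, the ``peel off positions $2n-1,2n$ and get a $2$-to-$1$ map'' recursion does not work as stated: when $\pi(2n)=2n-2$, the value $2n$ lives at some earlier even position, so deleting the last two positions does not yield a permutation of $[2n-2]$. The paper instead peels off an initial block $(1,2k,3,2,5,4,\dots,2k-1,2k-2)$ of variable length, which gives a bijection to compositions of $n$; if you want a genuine $2$-to-$1$ recursion you will need a different deletion (e.g.\ remove the value $2n$ and the fixed point $2n-1$, then close up the gap at the even position where $2n$ sat --- but you must check this preserves the Dumont-4 and $231$-avoidance conditions). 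Finally, your closing remark is misleading: Foata's fundamental transformation links $\mathfrak{D}^1\leftrightarrow\mathfrak{D}^2$ and $\mathfrak{D}^3\leftrightarrow\mathfrak{D}^4$, not $\mathfrak{D}^2\leftrightarrow\mathfrak{D}^4$, so the coincidence with $|\mathfrak{D}^2_{2n}(231)|=2^{n-1}$ is not explained by that map.
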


We will first prove that all odd entries are fixed points.

\begin{lemma}
\label{lm:odd_fixed}
Let  $\pi \in \mathfrak{D}^4_{2n}(1342)$.  Then $\pi(2k-1) = 2k-1$ for all $k$, $1 \le k \le n$.
\end{lemma}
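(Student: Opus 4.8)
The plan is to induct on $n$ (or, equivalently, to argue directly about the position of each odd value) using the two structural facts available for $\pi \in \mathfrak{D}^4_{2n}$: every deficiency is an even value in an even position, and $\pi(1) = 1$. In particular, $1$ is always a fixed point, which starts the induction at $k=1$. For the inductive step, suppose all odd values $1, 3, \dots, 2j-1$ sit in their own positions, i.e. $\pi(2i-1) = 2i-1$ for $i \le j$; I want to show $\pi(2j+1) = 2j+1$.

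First I would locate the value $2j+1$ and the position $2j+1$. By the inductive hypothesis, positions $1, 3, \dots, 2j-1$ are occupied by the values $1, 3, \dots, 2j-1$, so the value $2j+1$ lies in some position $p \ge 2j$, and the entry $\pi(2j+1)$ is some value $v \ge 2j$ (since none of the already-placed values $1,\dots,2j-1$ is at an even position yet, but also none of those small values can be $\pi(2j+1)$). If $p = 2j+1$ we are done, so assume $p \ne 2j+1$. The key point is the Dumont-4 deficiency restriction: position $2j+1$ is odd, hence $\pi(2j+1) \ge 2j+1$, so actually $v \ge 2j+1$; and since $v$ cannot be $2j+1$ (that value is elsewhere) we get $v \ge 2j+2$, an excedance at position $2j+1$. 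On the other side, the value $2j+1$ at position $p$: if $p$ is even then $p > 2j+1$ would be needed only if $2j+1$ were a deficiency there, which is impossible since $2j+1$ is odd; combined with $p \ge 2j$ this forces $p \ge 2j+2$ as well (if $p$ were odd it would have to be $\ge 2j+3$ by the inductive hypothesis). So both the value $2j+1$ and the value $v \ge 2j+2$ live at positions $\ge 2j+2$, with position $2j+1$ holding an excedance.

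Now I invoke $1342$-avoidance, equivalently $231$-avoidance (as noted in the excerpt, since the entry after $1$ is never $2$, we have $\mathfrak{D}^4_{2n}(1342) = \mathfrak{D}^4_{2n}(231)$). Using the fixed points $2j-1$ at position $2j-1$ together with the excedance at position $2j+1$ and the misplaced value $2j+1$ to its right, I would build a forbidden $231$ (or $1342$): the triple consisting of $\pi(2j+1) = v$, the value $2j+1$ sitting further right, and some small fixed-point value $2i-1 < 2j+1 \le v$ appearing still further right — but wait, the small odd values are all to the \emph{left}; so instead the right configuration is to take two entries among positions $2j+1, \dots, 2n$ that exceed some value appearing after them. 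More carefully: to the right of position $2j+1$ the value $2j+1$ must eventually appear, and to force $231$ I need a value larger than $2j+1$ before it and a value smaller than $2j+1$ after it; the smaller value is supplied because the remaining positions must still be filled by something, and the only values $< 2j+1$ not yet placed are even values $\le 2j$, which by the deficiency rule must occupy even positions $> $ themselves, i.e. positions $\ge 2j+2$, hence to the right of position $2j+1$. Picking such an even value $2m \le 2j$ appearing to the right of where $2j+1$ sits gives the "$1$" of a $231$, the value $v \ge 2j+2$ at position $2j+1$ gives the "$3$", and $2j+1$ itself gives the "$2$" — contradiction. Hence $p = 2j+1$ and $\pi(2j+1) = 2j+1$.

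The main obstacle I anticipate is the bookkeeping in that last step: one has to be careful that the small even value $2m$ really does appear to the right of the occurrence of $2j+1$ and not between position $2j+1$ and it, and to handle the possibility that $v = 2j+1$ would have made the argument vacuous (ruled out above) and the edge case $k = n$ where there may be no room on the right. I would dispatch the edge cases by noting $\pi(2n-1) \in \{2n-1, 2n\}$ (stated in the excerpt) which pins down the last odd position directly, and otherwise rely on the counting of unplaced values to guarantee the needed entry to the right. Everything else is a short case analysis driven entirely by the parity constraint on deficiencies.
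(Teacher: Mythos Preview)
Your overall strategy matches the paper's: induct on the odd position, and if $\pi(2j+1)\ne 2j+1$ exhibit a forbidden $231$ (equivalently $1342$) pattern. But the execution has the crucial positional fact reversed, and this breaks the pattern you construct.

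The error is in locating the value $2j+1$. You claim first that its position $p$ satisfies $p\ge 2j$ (unjustified: the inductive hypothesis only rules out the odd positions $1,3,\dots,2j-1$, so $p$ could be $2,4,\dots$), and then that $p\ge 2j+2$. In fact the Dumont-4 rule forces the opposite: since $2j+1$ is an \emph{odd} value, it cannot be a deficiency, so $p\le 2j+1$; combined with $p\ne 2j+1$ this gives $p\le 2j$. Thus the value $2j+1$ sits to the \emph{left} of position $2j+1$, not to the right. Your $231$-triple $(v,\,2j+1,\,2m)$ therefore does not occur in that left-to-right order. The correct triple (and the one the paper uses, with the initial $1$ prepended to make it $1342$) is $(2j+1,\,v,\,2l)$: the value $2j+1$ at some position $\le 2j$, then $v=\pi(2j+1)>2j+1$, then a small even value $2l\le 2j$ to the right.

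Your justification for the existence of that small even value to the right is also off: even values are not forced to be deficiencies, so ``by the deficiency rule must occupy even positions $>$ themselves'' is false. The correct argument is a pigeonhole: the $j+1$ values $1,3,\dots,2j-1,2j+1$ all lie in positions $\le 2j$, leaving only $j-1$ of those positions for the $j$ even values $2,4,\dots,2j$, so at least one such $2l$ lands at a position $>2j+1$. With these two fixes your sketch becomes exactly the paper's proof.
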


\begin{proof}
By definition of a Dumont-4 permutation $\pi(1)=1$, in other words, $1$ is a fixed point. 


Now assume that all odd entries from $1$ through $2j-1$ are fixed points.  Consider the entry in the next odd position, $\pi(2j+1)$, and as before, suppose $\pi(2j+1) \ne 2j+1$.  It follows that 
\begin{itemize}
\item the entry $2j+1$ must be to the left of $\pi(2j+1)$, 
\item $\pi(2j+1)>2j+1$ as an odd position cannot be a deficiency, and 
\item at least one even entry $2l\le 2j$ must be to the right of $\pi(2j+1)$. This is because all odd entries at most $2j+1$ are to the left of $\pi(2j+1)$, so if all even entries less than $2j+1$ are also to the left of $2j+1$, then there will be $2j+1$ entries to the left of position $2j+1$, which is impossible.
\end{itemize}

This yields a $1342$-occurrence $(1, 2j+1, \pi(2j+1), 2l)$.  Therefore $\pi(2j+1)=2j+1$, that is $2j+1$ is fixed.

By induction, the lemma is proved.
\end{proof}

Next, we will prove that if we have a deficiency at position $2k$, then it must be the entry $2k-2$.

\begin{lemma}
\label{lm:def=2k-2}
Let $\pi \in \mathfrak{D}^4_{2n}(1342)$. For all k, $1 \le k \le n$, if $\pi(2k) < 2k$ then $\pi(2k) = 2k-2$.
\end{lemma}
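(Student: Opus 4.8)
\textbf{Proof plan for Lemma~\ref{lm:def=2k-2}.}
The plan is to exploit Lemma~\ref{lm:odd_fixed}, which already tells us that every odd position is a fixed point, and then count how many entries can lie to the left of position $2k$. Suppose $\pi(2k) < 2k$, so $\pi(2k)$ is an even deficiency, say $\pi(2k) = 2m$ with $m < k$. I want to show $m = k-1$. First I would record what is forced to the left of position $2k$: by Lemma~\ref{lm:odd_fixed} the odd entries $1, 3, 5, \dots, 2k-1$ all occupy the odd positions $1, 3, \dots, 2k-1$, which are all to the left of position $2k$; that is already $k$ entries in the first $2k-1$ positions. The remaining $k-1$ positions among the first $2k-1$ (namely the even positions $2, 4, \dots, 2k-2$) must be filled by entries other than $2k$ (since $2k$ cannot be to the left of $2k-1$... more carefully, $2k$ sits at a position $\ge 2k$, hence not to the left of position $2k$) — and in fact not by $\pi(2k)=2m$ either.

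The key step is then the following counting/extremality argument. Consider the even entries $2, 4, \dots, 2k-2$ other than $2m$; there are $k-2$ of them. If $2m < 2k-2$, I claim the entry $2k-2$ must appear to the right of position $2k$: indeed, the even positions $2,4,\dots,2k-2$ strictly before position $2k$ number $k-1$, and they must be filled from the set of available small entries, but I would show that at most $k-2$ of the values $\{2,4,\dots,2k-2\}\setminus\{2m\}$ together with possibly one ``large'' entry can fit there without creating a $1342$ pattern. The clean way: if $2k-2$ were to the left of position $2k$, then since $\pi(2k)=2m < 2k-2$, the quadruple $(1,\ \pi(2k-1),\ 2k-2,\ 2m)$ — wait, $\pi(2k-1)=2k-1 < 2k-2$ is false, so instead use $(1,\ 2k-2,\ ?,\ 2m)$: we need a value between $2k-2$ and the end larger than $2k-2$ sitting between $2k-2$ and position $2k$. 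If position $2k-2$ itself holds $2k-2$... Let me instead argue directly: all of $1,\dots,2k-1$ are placed in positions $1,\dots,2k-1$, so positions $1,\dots,2k-1$ contain exactly the $k$ odd values $1,3,\dots,2k-1$ plus $k-1$ even values from $\{2,4,\dots\}$. Since $\pi(2k)=2m\le 2k-2$ lies at position $2k$, the even values in positions $1,\dots,2k-1$ are $k-1$ values from $\{2,4,\dots,2k-2,2k,2k+2,\dots\}\setminus\{2m\}$; if any of these is $\ge 2k$, say $2j$ at position $p<2k$, then there is an odd $2i-1$ with $2m < 2i-1 < 2j$ among positions between $p$ and $2k$ (since $2m<2k-1<2j$ and $2k-1$ sits at position $2k-1$), giving $(1, 2i-1, 2j, 2m)$... this needs $1$ before $2i-1$ before $2j$ before $2m$, i.e. $p$ must be before position $2k-1$, which may fail. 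So the robust argument is: the $k-1$ even entries in positions $1,\dots,2k-2$ must all be $< 2k$ (else, take the largest such even entry $2j\ge 2k$; everything in $\{2,\dots,2k-2\}$ is then to the right of it or equal to $2m$, and $2k-1$ is at position $2k-1$, so $(1,2j,2k-1,2m)$ is a $1342$ unless... $2k-1<2j$, good, $2m<2j$, good — it IS a $1342$ occurrence). Hence positions $1,\dots,2k-2$ hold even values all in $\{2,4,\dots,2k-2\}$, and there are $k-1$ of them but only $k-1$ candidates $\{2,4,\dots,2k-2\}$, minus $2m$ which is at position $2k$ — that is only $k-2$ candidates for $k-1$ slots at even positions $2,4,\dots,2k-2$, wait there are $k-1$ even positions before $2k$: namely $2,4,\dots,2k-2$. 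Contradiction unless the count works out, which forces exactly $m=k-1$ so that no even value is ``missing''.

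I would finish by cleaning up the off-by-one bookkeeping: there are exactly $k-1$ even positions strictly before position $2k$, they must be filled by $k-1$ distinct even values all in $\{2,4,\dots,2k-2\}$ (a set of size $k-1$), and $\pi(2k)=2m$ is an even value; if $2m\in\{2,4,\dots,2k-2\}$ with $2m$ NOT among those in positions before $2k$, then the $k-1$ values used before position $2k$ are exactly $\{2,4,\dots,2k-2\}\setminus\{2m\}$, which has size $k-2<k-1$ — impossible. Hence the only escape is that the earlier ``all even entries before $2k$ are $<2k$'' claim is saved precisely when $2m=2k-2$, i.e. when $\{2,4,\dots,2k-2\}\setminus\{2m\}$ can be supplemented: re-examining, when $m=k-1$ the value $2k-2=\pi(2k)$ is a genuine deficiency and the even positions $2,\dots,2k-2$ are filled by $\{2,4,\dots,2k-4\}$ together with one larger even entry, and one checks no $1342$ arises because that larger entry has nothing smaller than $\pi(2k)=2k-2$ to its right that also lies above it appropriately. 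The main obstacle I anticipate is exactly this last consistency check — verifying that $m=k-1$ does not itself get excluded by the same pattern argument — which amounts to confirming that the ``large'' even entry forced into positions $<2k$ cannot play the role of the ``3'' or ``4'' in a $1342$ with bottom $2m=2k-2$; this should follow since the only values below $2k-2$ available to its right are even and smaller, but they sit at positions that (by the parity/fixed-point structure from Lemma~\ref{lm:odd_fixed}) land after position $2k$, breaking the required order. I will present the counting argument first and then dispatch this residual case.
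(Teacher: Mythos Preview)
Your argument has a genuine gap that makes it internally inconsistent. The crux of your approach is the claim that if some even entry $2j\ge 2k$ occupies an even position $p<2k$, then $(1,\,2j,\,2k-1,\,2m)$ at positions $(1,\,p,\,2k-1,\,2k)$ is an occurrence of $1342$. But check the ranks: $1<2m<2k-1<2j$, so this subsequence has pattern $1432$, not $1342$. With that step removed, your counting argument actually shows that for \emph{every} $m\le k-1$ (including $m=k-1$) some large even entry $\ge 2k$ must sit among positions $2,4,\dots,2k-2$; so the count never distinguishes $m=k-1$ from $m<k-1$. You sense this in your final paragraph (``when $m=k-1$ \dots one checks no $1342$ arises''), but that concession directly contradicts the claim you just used, and the ``residual case'' you propose to dispatch is in fact the entire content of the lemma.

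The paper's proof is a two-line argument that bypasses all of this. Since every Dumont-4 permutation starts with $1$, avoiding $1342$ is equivalent to avoiding $231$. Now suppose $\pi(2k)\le 2k-4$. By Lemma~\ref{lm:odd_fixed} the entries $2k-3$ and $2k-1$ are fixed points, so they sit at positions $2k-3$ and $2k-1$, both to the left of position $2k$, and both exceed $\pi(2k)$. Hence $(2k-3,\,2k-1,\,\pi(2k))$ at positions $(2k-3,\,2k-1,\,2k)$ is a $231$ pattern (equivalently, $(1,\,2k-3,\,2k-1,\,\pi(2k))$ is a $1342$). This is the pattern you should look for: two odd fixed points above $\pi(2k)$, which exist precisely when $\pi(2k)\le 2k-4$, i.e.\ when $m\le k-2$.
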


\begin{proof}
We will prove by contradiction that no deficiency at position $2k$ can have an entry less than or equal to $2k-4$.  So, let $\pi(2k) \le 2k-4$.  Since all odd entries are fixed points by Lemma \ref{lm:odd_fixed}, the two odd entries $2k-3$ and $2k-1$ are above and to the left of $\pi(2k)$, the triple $(2k-3, 2k-1,\pi(2k))$ at positions $(2k-3,2k-1,2k)$ would yield a 231-occurrence. 

By contradiction, the lemma is proved.
\end{proof}

Now we will prove Theorem \ref {1342_theorem}.

\begin{proof}[of Theorem \ref{1342_theorem}]
Given Lemma \ref{lm:odd_fixed}, all odd entries are fixed. Thus the even values are the only entries of interest, and moreover, all even values occur in even positions.  Furthermore, due to Lemma \ref{lm:def=2k-2}, a deficiency at position $2k$ must be the entry $2k-2$.

For $\pi\in\mathfrak{D}^4_{2n}(1342)=\mathfrak{D}^4_{2n}(231)$ consider a permutation $\pi_e\in S_n(231)$ such that $\pi_e(i)=j$ if $\pi(2i)=2j$ for $i,j\in[n]$. Then, for each $i\in[n]$, we have $\pi_e(i)\ge i-1$. Suppose that $\pi_e(1)=k$, then 231 avoidance of $\pi_e$ implies that the entries in $[1,k-1]$ occupy positions $[2,k]$. But $\pi_e(i)\ge i-1$ for all $i\in[n]$, so this can only occur if the values $1,\dots,k-1$ are consecutive and in increasing order. In other words, $\pi_e=(k,1,2,\dots,k-1,\tau+k)$, where $\tau\in S_{n-k}(231)$ satisfies the same conditions as $\pi_e$. 

Note also that the only nondeficiency among the first $k$ letters is the value $k$. Thus, we see inductively that the entire structure of $\pi_e$ is determined by the values of its nondeficiencies, one of which must be the entry $n$. Combining $\pi_e$ as above with the odd entries of $\pi$ (all fixed points), we see that $\pi$ also avoids $231$. Thus, $\pi_e$ (and hence, $\pi$) is determined by the choice of a subset $S \subseteq [1,n-1]$ such that if $k\in S$, then $2k$ is a nondeficiency of $\pi$. All of these choices are unrestricted, therefore the number of such choices is $2^{n-1}$. Moreover, the semilengths of the resulting blocks of the same form as $(1,2k,\pi')$ yield a composition of $n$ into nonzero parts. For example, $\pi = 16325478 \in \mathfrak{D}^4_{8}(1342)$ corresponds to the composition $4=3+1$ (see Figure \ref{fig:many8x8}). \qedhere

\end{proof}



\begin{example} 
\label {ex: D^4_8(1342)}
Below are all eight Dumont-4 permutations of length $8$ avoiding $1342$, and the corresponding compositions of $4=8/2$.\\
\begin{center}

\begin{tikzpicture}[scale=0.4]

\begin{scope}
\draw[scale = 0.25, step = 2, very thin,color=gray] (0,0) grid (16,16);
 \foreach \position in {(1,1), (5,5), (9,9), (13,13)} 
    \filldraw [scale = 0.25, color=black] \position circle (8pt);
 \foreach \position in {(3,3), (7,7), (11,11), (15,15)} 
    \filldraw [scale = 0.25, color=red] \position circle (8pt);
\draw [line width = 1pt, color=blue] (0,0) rectangle (1,1);
\draw [line width = 1pt, color=blue] (1,1) rectangle (2,2);
\draw [line width = 1pt, color=blue] (2,2) rectangle (3,3);
\draw [line width = 1pt, color=blue] (3,3) rectangle (4,4);
\node [scale=0.6, below right] (0,0) {$4=1+1+1+1$};
\end{scope}

\begin{scope}[xshift=6cm]
\draw[scale = 0.25, step = 2, very thin,color=gray] (0,0) grid (16,16);
 \foreach \position in {(1,1), (5,5), (9,9), (13,13)} 
    \filldraw [scale = 0.25, color=black] \position circle (8pt);
 \foreach \position in {(3,3), (7,7), (11,15), (15,11)} 
    \filldraw [scale = 0.25, color=red] \position circle (8pt);
\draw [line width = 1pt, color=blue] (0,0) rectangle (1,1);
\draw [line width = 1pt, color=blue] (1,1) rectangle (2,2);
\draw [line width = 1pt, color=blue] (2,2) rectangle (4,4);
\node [scale=0.6, below right] (0,0) {$4=1+1+2$};
\end{scope}

\begin{scope}[xshift=12cm]
\draw[scale = 0.25, step = 2, very thin,color=gray] (0,0) grid (16,16);
 \foreach \position in {(1,1), (5,5), (9,9), (13,13)} 
    \filldraw [scale = 0.25, color=black] \position circle (8pt);
 \foreach \position in {(3,3), (7,11), (11,7), (15,15)} 
    \filldraw [scale = 0.25, color=red] \position circle (8pt);
\draw [line width = 1pt, color=blue] (0,0) rectangle (1,1);
\draw [line width = 1pt, color=blue] (1,1) rectangle (3,3);
\draw [line width = 1pt, color=blue] (3,3) rectangle (4,4);
\node [scale=0.6, below right] (0,0) {$4=1+2+1$};
\end{scope}

\begin{scope}[xshift=18cm]
\draw[scale = 0.25, step = 2, very thin,color=gray] (0,0) grid (16,16);
 \foreach \position in {(1,1), (5,5), (9,9), (13,13)} 
    \filldraw [scale = 0.25, color=black] \position circle (8pt);
 \foreach \position in {(3,3), (7,15), (11,7), (15,11)} 
    \filldraw [scale = 0.25, color=red] \position circle (8pt);
\draw [line width = 1pt, color=blue] (0,0) rectangle (1,1);
\draw [line width = 1pt, color=blue] (1,1) rectangle (4,4);
\node [scale=0.6, below right] (0,0) {$4=1+3$};
\end{scope}

\begin{scope}[xshift=0cm, yshift=-6cm]
\draw[scale = 0.25, step = 2, very thin,color=gray] (0,0) grid (16,16);
 \foreach \position in {(1,1), (5,5), (9,9), (13,13)} 
    \filldraw [scale = 0.25, color=black] \position circle (8pt);
 \foreach \position in {(3,7), (7,3), (11,11), (15,15)} 
    \filldraw [scale = 0.25, color=red] \position circle (8pt);
\draw [line width = 1pt, color=blue] (0,0) rectangle (2,2);
\draw [line width = 1pt, color=blue] (2,2) rectangle (3,3);
\draw [line width = 1pt, color=blue] (3,3) rectangle (4,4);
\node [scale=0.6, below right] (0,0) {$4=2+1+1$};
\end{scope}

\begin{scope}[xshift=6cm, yshift=-6cm]
\draw[scale = 0.25, step = 2, very thin,color=gray] (0,0) grid (16,16);
 \foreach \position in {(1,1), (5,5), (9,9), (13,13)} 
    \filldraw [scale = 0.25, color=black] \position circle (8pt);
 \foreach \position in {(3,7), (7,3), (11,15), (15,11)} 
    \filldraw [scale = 0.25, color=red] \position circle (8pt);
\draw [line width = 1pt, color=blue] (0,0) rectangle (2,2);
\draw [line width = 1pt, color=blue] (2,2) rectangle (4,4);
\node [scale=0.6, below right] (0,0) {$4=2+2$};
\end{scope}

\begin{scope}[xshift=12cm, yshift=-6cm]
\draw[scale = 0.25, step = 2, very thin,color=gray] (0,0) grid (16,16);
 \foreach \position in {(1,1), (5,5), (9,9), (13,13)} 
    \filldraw [scale = 0.25, color=black] \position circle (8pt);
 \foreach \position in {(3,11), (7,3), (11,7), (15,15)} 
    \filldraw [scale = 0.25, color=red] \position circle (8pt);
\draw [line width = 1pt, color=blue] (0,0) rectangle (3,3);
\draw [line width = 1pt, color=blue] (3,3) rectangle (4,4);
\node [scale=0.6, below right] (0,0) {$4=3+1$};
\end{scope}

\begin{scope}[xshift=18cm, yshift=-6cm]
\draw[scale = 0.25, step = 2, very thin,color=gray] (0,0) grid (16,16);
 \foreach \position in {(1,1), (5,5), (9,9), (13,13)} 
    \filldraw [scale = 0.25, color=black] \position circle (8pt);
 \foreach \position in {(3,15), (7,3), (11,7), (15,11)} 
    \filldraw [scale = 0.25, color=red] \position circle (8pt);
\draw [line width = 1pt, color=blue] (0,0) rectangle (4,4);
\node [scale=0.6, below right] (0,0) {$4=4$};
\end{scope}

\end{tikzpicture}


%
%


%
%


%
%


\begin{figure}[ht]
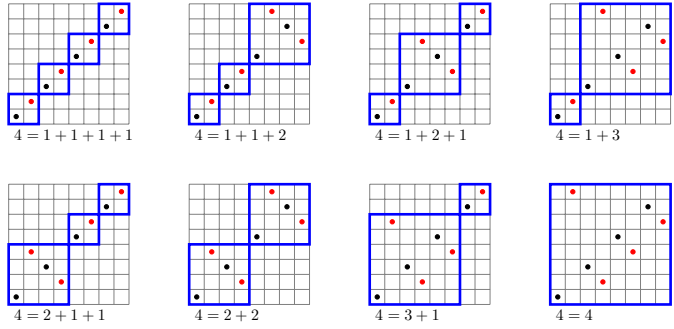

 \caption{Dumont-4 permutations of length $8$ avoiding $1342$.}
\label{fig:many8x8}
\end{figure}
\end{center}

\end{example}



\subsubsection{$\mathfrak{D}^4_{2n}(1432)$ \label{321Dyck}}

As noted earlier, since the entry following $1$ is not $2$, it follows that $\mathfrak{D}^4_{2n}(1432)=\mathfrak{D}^4_{2n}(321)$.

\begin{theorem}
\label{thm:1432}
$|\mathfrak{D}^4_{2n}(1432)|=|\mathfrak{D}^4_{2n}(321)|=C_n$ for $n \ge 0$, where $C_n$ is the $n$-th Catalan number.
\label{321thm}
\end{theorem}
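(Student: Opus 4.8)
The statement contains two equalities, and I would treat them separately. For the first, $|\mathfrak{D}^4_{2n}(1432)|=|\mathfrak{D}^4_{2n}(321)|$, I would prove the stronger fact (already flagged in the remark opening this section) that the two sets literally coincide. Since $1$ cannot be a deficiency, every $\pi\in\mathfrak{D}^4_{2n}$ satisfies $\pi(1)=1$, its global minimum. If $\pi$ contains an occurrence of $321$, say $\pi(i)>\pi(j)>\pi(k)$ with $i<j<k$, then necessarily $i\ge 2$ and $\pi(k)\ge 2>1=\pi(1)$, so $(\pi(1),\pi(i),\pi(j),\pi(k))$ is an occurrence of $1432$; conversely any occurrence of $1432$ contains an occurrence of $321$ on its last three entries. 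Hence containment of $321$ and of $1432$ are equivalent for Dumont-$4$ permutations, so $\mathfrak{D}^4_{2n}(1432)=\mathfrak{D}^4_{2n}(321)$ and the first equality is immediate.

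For $|\mathfrak{D}^4_{2n}(321)|=C_n$ I plan a bijection onto the ``dominance pairs'' on $[n]$. The first step is two structural lemmas valid for \emph{any} $321$-avoiding permutation, each proved by a short pigeonhole count: the weak excedances (positions $i$ with $\pi(i)\ge i$) carry strictly increasing values read left to right, and likewise the deficiencies carry strictly increasing values. For the weak excedances, if $i<j$ were weak excedances with $\pi(i)>\pi(j)$, then to avoid $321$ all $\pi(j)-1\ge j-1$ values below $\pi(j)$ must occupy positions $\le j$, which together with $\pi(i)$ and $\pi(j)$ forces at least $\pi(j)+1\ge j+1$ distinct values into $j$ positions, a contradiction; the deficiency statement is dual. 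Consequently a $321$-avoiding permutation is \emph{completely determined} by the set $P$ of its deficiency positions and the set $V$ of its deficiency values: the members of $V$ fill $P$ in increasing order, and the remaining values fill the remaining positions in increasing order. Conversely, any filling by two such increasing subsequences automatically avoids $321$, since in any decreasing triple two entries would lie in the same class and violate its monotonicity.

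Now I would impose the Dumont-$4$ condition. Its definition forces $P$ and $V$ to be subsets of $\{2,4,\dots,2n\}$ with $|P|=|V|=m$, and after halving to $A=\{a_1<\dots<a_m\}$, $B=\{b_1<\dots<b_m\}\subseteq[n]$ the deficiency requirement $2b_i<2a_i$ becomes $b_i<a_i$ for all $i$ — i.e.\ $(A,B)$ is a dominance pair. I claim $\pi\mapsto(A,B)$ is a bijection onto all dominance pairs. Injectivity is immediate from the two increasing-sequence lemmas. Surjectivity hinges on a complementation lemma: for subsets of $[2n]$, strict dominance $v_i<p_i$ of $V$ by $P$ is equivalent to weak dominance $v^c_k\ge p^c_k$ of $P^c$ by $V^c$, because both are equivalent to the cardinality condition $|P\cap[1,t]|\le|V\cap[1,t]|$ for all $t$ (the strict inequality $v_i<p_i$ gives ``$\le$'' and complementing inside $[1,t]$ flips it). This guarantees that when the remaining values fill the remaining positions they all land weakly above the diagonal, so the reconstructed permutation really is Dumont-$4$, has exactly $P$ as its deficiency set (the equality cases $v^c_k=p^c_k$ being precisely the fixed points), and avoids $321$. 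Finally, counting dominance pairs $\sum_m \#\{(A,B):b_i<a_i\}$ is the classical Catalan statistic equal to $C_n$, verifiable by the standard bijection with Dyck paths of semilength $n$; I checked it returns $1,2,5,14$ for $n=1,2,3,4$.

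I expect the main obstacle to be surjectivity, that is, showing the purely numerical condition $b_i<a_i$ is not merely necessary but \emph{sufficient} for a valid reconstruction; this is exactly what the complementation lemma supplies, reducing the needed weak dominance of $P^c$ by $V^c$ to the elementary interval-counting identity above, with the equality cases tracked carefully so that fixed points and genuine excedances are correctly distinguished. Once that lemma is in hand, the structural lemmas make the rest of the bijection routine, and the Catalan enumeration of dominance pairs completes the proof.
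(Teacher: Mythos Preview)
Your argument is correct, but it follows a genuinely different route from the paper's. The paper works geometrically on the $2n\times 2n$ board: it adapts Krattenthaler's bijection by marking the even right-to-left minima of $\pi$, shifting each down by one cell, and reading off the staircase path whose peaks sit at those shifted cells. The key observation there is that every maximal run of east steps and every maximal run of north steps has even length, so halving all run lengths yields a Dyck path of semilength $n$; this gives the bijection to $C_n$ directly.

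Your approach instead isolates the deficiency data. The two structural lemmas (weak-excedance values increase, deficiency values increase in any $321$-avoider) are correct and cleanly argued, and they imply that $\pi$ is determined by the pair $(P,V)$ of deficiency positions and values. Halving produces a strict dominance pair on $[n]$, and your complementation lemma handles surjectivity. In effect you have produced a bijection $\mathfrak{D}^4_{2n}(321)\to S_n(321)$: a strict dominance pair $(A,B)$ on $[n]$ is precisely the deficiency data of a unique $\sigma\in S_n(321)$ by the same two lemmas and the same complementation argument, and $|S_n(321)|=C_n$ is classical. It would strengthen your write-up to make that last identification explicit rather than leaving ``dominance pairs are a classical Catalan statistic'' as an appeal to a standard Dyck-path bijection; as written, the reader must reconstruct that the enumeration reduces to $|S_n(321)|$.

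What each approach buys: the paper's path construction is visual and self-contained, needing no auxiliary lemma about complements. Your approach is more structural and yields, as a by-product, a transparent bijection $\mathfrak{D}^4_{2n}(321)\leftrightarrow S_n(321)$ sending $\pi$ to the permutation of $[n]$ whose deficiency positions and values are the halved deficiency positions and values of $\pi$; this is arguably more reusable. Both are valid proofs of the theorem.
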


Since we are considering $321$-avoiding permutations, we will adapt a bijection of Krattenthaler \cite{Krattenthaler} from $S_n(321)$ to Dyck paths of semilength $n$.

\begin{proof}
Let $\pi \in \mathfrak{D}^4_{2n}(1432) = \mathfrak{D}^4_{2n}(321)$, and consider right-to-left minima (an entry $\pi(i)$ for which $\pi(i) < \pi(j)$ whenever $i < j$).  If an odd fixed point is a right-to-left minimum, then every entry to the left must have a smaller value, and every entry to the right must have a larger value, otherwise we encounter a 321-occurrence with the entry $2k+1$ serving as the ``2''.  In other words, if $\pi(2k+1) = 2k+1$, for some $0 \le k \le n-1$, then $\pi = (\pi', 2k+1, \pi''+2k+1)$ where $\pi' \in \mathfrak{D}^4_{2k}(321)$, and $(1,\pi''+1) \in \mathfrak{D}^4_{2n-2k}(321)$.  And, since the odd entry $\pi(2k+1) = 2k+1$ is a right-to-left minimum, the entry $2k+2$, which must be to the right of $2k+1$, is also a right-to-left minimum.

Consider a $2n \times 2n$ board with the dots in the $i$-th column from the left being in $\pi(i)$-th row from the bottom, for $1\le i\le 2n$. Now consider the dots which represent even right-to-left minima (solid red dots in Figure \ref{fig:big12x12_1432}), and lower them one cell down (hollow red dots in Figure \ref{fig:big12x12_1432}). Notice that, in particular, this will place a hollow red dot in each row with an odd right-to-left minimum.

Now travel along the cell boundaries from $(0, 0)$ to $(2n, 2n)$ in an \emph{East-North} fashion, using steps $(1,0)$ (\emph{east}) and $(0,1)$ (\emph{north}) and keeping all dots (both filled and hollow) to the left of the path, staying as close to the diagonal as possible. Equivalently, this is the path $P$ where the peaks (instances where an east step is followed directly by a north step) are exactly the bottom and right boundaries of the cells with hollow red dots.

For example, consider Figure \ref{fig:big12x12_1432} corresponding to $\pi = 1\ 3\ 5\ 2\ 6\ 4\ 7\ 8\ 9\ 11\ 12\ 10 \in \mathfrak{D}^4_{12}(321)$. Note that red dots are even right-to-left minima, the blue dots are odd right-to-left minima (which are fixed points), and the black dots are excedances.  

Suppose that $\pi$ has $k$ even non-excedances with (even) values $2=b_1<b_2<\dots<b_k$ at (even) positions $a_1<a_2<\dots<a_k=2n$, respectively. Also, let $b_{k+1}=2n+2$ and $a_0=0$. Then the \emph{runs} (maximal blocks) of east steps have lengths $a_i-a_{i-1}$ for $1\le i\le k$, and the runs of north steps have lengths $b_{i+1}-b_i$ for $1\le i\le k$.

Therefore, all runs of east and north steps in path $P$ are of even length (see Figure \ref{fig:big12x12_1432}, with $n=6$ and even non-excedance values of $2,4,8,10$ at positions $4,6,8,12$). Dividing the lengths of these runs in half, we obtain a Dyck path of semilength $n$ from $(0,0)$ to $(n,n)$.


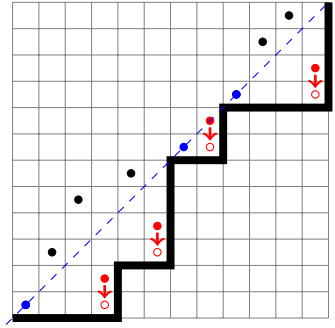
\begin{figure}[!h]
\begin{center}
\begin{tikzpicture}[scale=0.175]
\draw[step = 2, very thin,color=gray] (0,0) grid (24,24);
\foreach \position in {(1,1), (3,5), (5,9), (9,11), (17,17), (19,21), (21, 23)} 
    \filldraw [color = black] \position circle (8pt);
\foreach \position in {(1,1), (13,13), (17,17)} 
    \filldraw [color = blue] \position circle (8pt);
\foreach \position in {(7,3), (11,7), (15,15), (23, 19)} 
    \filldraw [color = red] \position circle (8pt);
\foreach \position in {(7,1), (11,5), (15,13), (23, 17)} 
    \draw [color = red] \position circle (8pt);
\draw [->, line width = 1pt, color=red] (7,2.5) -- (7,1.5);
\draw [->, line width = 1pt, color=red] (11,6.5) -- (11,5.5);
\draw [->, line width = 1pt, color=red] (15,14.5) -- (15,13.5);
\draw [->, line width = 1pt, color=red] (23,18.5) -- (23,17.5);
\draw [line width = 3pt, color=black] (0,0) -- (8,0) -- (8,4) -- (12, 4) -- (12,12) -- (16,12) -- (16,16) -- (24,16) -- (24,24);
\draw [dashed, color = blue] (-0.5,-0.5) -- (24.5,24.5);
\end{tikzpicture} 
\caption{Permutation diagram of $\pi = 1\ 3\ 5\ 2\ 6\ 4\ 7\ 8\ 9\ 11\ 12\ 10 \in \mathfrak{D}^4_{12}(1432)$} 
\label{fig:big12x12_1432}
\end{center}
\end{figure}

Therefore, the number of Dumont-4 permutations of length $2n$ avoiding $321$ is the same as the number of Dyck paths of semilength $n$, i.e. the $n$th Catalan number, $C_n$.  This ends the proof.
\end{proof}




\subsection {Enumerating Dumont-4 permutations avoiding certain permutations of length four}

Now that all three Dumont-4 permutations of length four have been avoided by Dumont-4 permutations of length $2n$, we will look at three other permutations of length four starting with $1$, namely $1324$, $1243$, and $1423$.  Also note, excluding the entry ``$1$", the three permutations in the previous section coupled with the three permutations in this section constitute all of $S_3$.

\subsubsection{$\mathfrak{D}^4_{2n}(1324)$}

As noted earlier, since the entry following $1$ is not $2$, it follows that $\mathfrak{D}^4_{2n}(1324)=\mathfrak{D}^4_{2n}(213)$.

\begin{theorem}
$|\mathfrak{D}^4_{2n}(1324)| = 2 \binom{n}{2} + 1 = n^2 - n + 1$, \text{for} $n \ge 0$.
\end{theorem}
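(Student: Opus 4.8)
The plan is to use the reduction $\mathfrak{D}^4_{2n}(1324)=\mathfrak{D}^4_{2n}(213)$ noted just above, so that it suffices to prove that $a_n:=|\mathfrak{D}^4_{2n}(213)|$ satisfies $a_0=1$ and $a_n=a_{n-1}+2n-2$ for $n\ge 1$, which solves to $a_n=1+\sum_{k=1}^n(2k-2)=n^2-n+1=2\binom{n}{2}+1$. To obtain the recurrence I would classify each $\pi\in\mathfrak{D}^4_{2n}(213)$ by the position of the entry $2$. Since $1=\pi(1)$ is the only entry smaller than $2$, the entry $2$ is either the fixed point $\pi(2)=2$ or a deficiency, and a deficiency in a Dumont-$4$ permutation occupies an even position; so in every case $2$ sits at an even position. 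I would then split into Case A, where $\pi(2)=2$, and Case B, where $\pi(m)=2$ with $m$ even and $m\ge 4$.

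In Case A, I would delete the two front fixed points $1$ and $2$ and subtract $2$ from every remaining entry. Since neither $1$ nor $2$ can occur in any $213$ pattern (they are too small and too far to the left), and deleting front fixed points does not affect the Dumont-$4$ condition on the surviving positions (positions and values shift by $2$, preserving parity), this map is a bijection from Case A onto $\mathfrak{D}^4_{2n-2}(213)$; hence Case A contributes $a_{n-1}$.

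Case B is the substantial part. First I would show that the entry $2$ must actually sit at the last position, $m=2n$: if $4\le m\le 2n-2$, then $213$-avoidance with $2$ playing the role of the ``$1$'' of the pattern forces every entry in positions $2,\dots,m-1$ to exceed every entry in positions $m+1,\dots,2n$; but position $2n-1$ is odd, hence not a deficiency, so $\pi(2n-1)\ge 2n-1$, and then all $m-2\ge 2$ entries of positions $2,\dots,m-1$ would have to equal $2n$, which is absurd. Next, writing $\pi(t)=2n$, I would pin $\pi$ down completely. Because $2n$ is the maximal entry, $213$-avoidance makes $\pi(1)<\dots<\pi(t-1)$ with $\pi(1)=1$. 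I then claim $\pi(j)=j$ for every $j$ with $t<j<2n$: otherwise, taking $j_0$ to be the largest such $j$ with $\pi(j_0)\ne j_0$, one checks $\pi(2n-1)=2n-1$ (it is $\ge 2n-1$ and is neither $2n$ nor $2$), so $j_0\le 2n-2$; maximality forces positions $j_0+1,\dots,2n-1$ to carry the values $j_0+1,\dots,2n-1$, hence $\pi(j_0)<j_0$ is a deficiency, so $j_0$ and $w:=\pi(j_0)$ are both even; but then the odd value $w+1$ (with $3\le w+1<j_0$) must sit at some position $c<j_0$, and the entries $w+1,w,j_0+1$ in positions $c<j_0<j_0+1$ form a $213$ pattern — a contradiction. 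With the claim in hand, the entries of positions $1,\dots,t-1$ are exactly $\{1,3,4,\dots,t\}$ and, being increasing, they must appear in that order, so
\[
\pi=\pi_t:=(1,\ 3,\ 4,\ \dots,\ t,\ 2n,\ t+1,\ \dots,\ 2n-1,\ 2),\qquad 2\le t\le 2n-1 .
\]
Conversely, each $\pi_t$ is readily checked to lie in $\mathfrak{D}^4_{2n}$ (its only deficiency is $\pi_t(2n)=2$, an even value at an even position) and to avoid $213$ (in every inversion of $\pi_t$ the larger entry is $2n$ or the smaller entry is the last entry $2$, and neither can be completed to a $213$ occurrence). Hence Case B contributes exactly $2n-2$ permutations for $n\ge 2$, and none for $n\le 1$. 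Adding the two cases gives $a_n=a_{n-1}+2n-2$, as required.

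I expect the main obstacle to be the structural analysis of Case B, specifically the claim that every position strictly between the position of $2n$ and the final position is a fixed point: this is where the parity restriction on deficiencies peculiar to Dumont-$4$ permutations has to be played off against $213$-avoidance in just the right way. The remaining ingredients — the Case A bijection and solving the linear recurrence — are routine.
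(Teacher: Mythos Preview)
Your proof is correct, but it takes a different route from the paper's. The paper argues directly: it classifies $\pi\in\mathfrak{D}^4_{2n}(1324)$ by the last entry $\pi(2n)$. If $\pi(2n)=2n$ one gets the identity; otherwise $\pi(2n)=2k$ for some $k\in[1,n-1]$, and the paper shows (using that $\pi(2n-1)\in\{2n-1,2n\}$ forces everything below and to its left to be increasing) that values $1,\dots,2k-1$ are fixed and the remaining freedom is the position $l\in[2k,2n-1]$ of the entry $2n$. Summing $2n-2k$ over $k$ gives $1+2\binom{n}{2}$ in one stroke.

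Your approach instead classifies by the position of the entry $2$, peels off the case $\pi(2)=2$ as a bijection with $\mathfrak{D}^4_{2n-2}(213)$, and shows the remaining case forces $\pi(2n)=2$ with exactly $2n-2$ possibilities indexed by the position of $2n$. Thus you derive the recurrence $a_n=a_{n-1}+(2n-2)$ and solve it. In effect, your Case~B is precisely the paper's $k=1$ slice, and your Case~A recursively accounts for $k\ge 2$ together with the identity. What the paper's argument buys is a closed-form parametrisation of \emph{all} permutations in $\mathfrak{D}^4_{2n}(1324)$ at once (by the pair $(k,l)$), with a shorter structural analysis; what your argument buys is that the hard structural step (your fixed-point claim for positions between $t$ and $2n$) only has to be carried out in the special case $\pi(2n)=2$, at the cost of an extra induction. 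Both are valid and comparably elementary.
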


\begin{proof}
Consider the rightmost value of $\pi\in\mathfrak{D}^4_{2n}(1324)$. If the entry $2n$ occurs in the last position, then $\pi = 1,2,3,4,\dots,2n-1,2n$.  This permutation accounts for the ``$1$" in $2\binom{n}{2} + 1$.

If the last entry is not $2n$, then it must be a deficiency, so by definition of a Dumont-4 permutation, it must be even, say $2k$, where $1 \le k \le n-1$. 

Note that $\pi(2n-1)=2n-1$ or $\pi(2n-1)=2n$, and therefore all entries below and to the left of $\pi(2n-1)$ must be in increasing order to avoid pattern $213$. Therefore, all entries in $[1,2n-1]\setminus\{2k\}$, i.e. all entries of $\pi$ except $2n$ and $2k$ are in increasing order.

Moreover, the value $2k-1$ can only occur in position $2k-1$ or smaller, so for the values $1,2,\dots,2k-1$ to form an increasing subsequence of $\pi$, all of them must be fixed points. Thus, the value $2n$ must occur in some position $l\in[2k,2n-1]$. In fact, each choice of $\pi(2n)=2k$ and $\pi^{-1}(2n)=l\in[2k,2n-1]$ yields a unique permutation $\pi\in\mathfrak{D}^4_{2n}(1324)$.

Thus, 
we can write $\pi=(1,\pi',\rho,2n,\sigma,2k)$, where $\pi'= (1, 2, \dots, 2k-1)$, $\rho=(2k+1, 2k+2, \dots, l)$, and $\sigma=(l+1,l+2, \dots, 2n-1)$.

When $\pi(2n)=2k<2n$, the number of possible choices for the position $l$ of $2n$ is the number of elements in $[2k,2n-1]$, i.e. $2n-2k$. Since $1\le k\le n-1$, the total number of Dumont-4 permutations avoiding 1324 is
\[
|\mathfrak{D}^4_{2n}(1324)| = 1 + \sum_{k=1}^{n-1}(2n-2k)
= 1 + 2\sum_{k=1}^{n-1}(n-k)
= 1 + 2\sum_{k=1}^{n-1}k
= 1+ 2 \binom {n}{2}
= n^2-n+1. \qedhere
\]
\end{proof}

\begin{example}
\label{example:D^4_16(1324)}
A Dumont-4 permutation of length $16$ avoiding $1324$ can be obtained by placing the entry $6$ in position $16$, and the entry $16$ in position $10$. All other entries must be in increasing order.  
See Figure \ref{fig:big16x16} for an example.

\begin{figure}[!ht]
\begin{center}
\begin{tikzpicture}[scale = 0.4]
\draw[very thin, color = gray, scale = 0.7] (0,0) grid (16,16);
 \foreach \position in {(1,1), (3,3), (5,5), (7,7), (9,9), (11,13), (13,15), (15,17), (17,19), (21,21), (23,23), (25, 25), (27, 27), (29, 29)}
    \filldraw [scale = 0.35, color = black] \position circle (8pt);
 \foreach \position in {(31,11)} 
    \filldraw [scale = 0.35, color = red] \position circle (8pt);
 \foreach \position in {(19,31)} 
    \filldraw [scale = 0.35, color = blue] \position circle (8pt);
\draw [line width = 2pt, color = red, scale = 0.7] (1,1) rectangle (5,5);
\draw [line width = 2pt, color = red, scale = 0.7] (5,6) rectangle (15,16);
\draw [line width = 2pt, color = blue, dashed, scale = 0.7] (5.1,6.1) rectangle (9,10);
\draw [line width = 2pt, color = blue, dashed, scale = 0.7] (10, 10) rectangle (14.9,15);
\end{tikzpicture} 
	\caption {Permutation diagram of
	$\pi = 	1\ 2\ 3\ 4\ 5\ 7\ 8\ 9\ 10\ 16\ 11\ 12\ 13\ 14\ 15\ 6 
			\in \mathfrak{D}^4_{16}(1324)$}
	\label{fig:big16x16}
\end{center}
\end{figure}
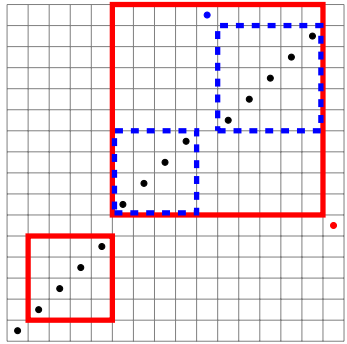

\end{example}

\subsubsection{$\mathfrak{D}^4_{2n}(1243)$}

\begin{theorem} \label{thm:1243}
$|\mathfrak{D}^4_{2n}(1243)| = |\mathfrak{D}^4_{2n}(1324)| = 2 \binom{n}{2} + 1 = n^2 - n + 1$, \text{for} $n \ge 0$.
\end{theorem}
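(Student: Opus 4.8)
The plan is to set up a recursion on $n$ by examining the last entry of $\pi\in\mathfrak{D}^4_{2n}(1243)$, in the spirit of the preceding proof for $\mathfrak{D}^4_{2n}(1324)$ but with a more delicate count in the nontrivial case. The starting observation is a reduction: since every Dumont-4 permutation begins with $\pi(1)=1$, its smallest entry, and the pattern $243$ is order-isomorphic to $132$, a Dumont-4 permutation $\pi$ avoids $1243$ if and only if $\pi(2)\pi(3)\cdots\pi(2n)$ avoids $132$. I would then use the standard block decomposition of a $132$-avoiding word throughout: if $M$ is its largest entry, everything to the left of $M$ exceeds everything to its right, and both blocks are $132$-avoiding; this controls which positions are forced to carry small values.

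Split on $\pi(2n)$. If $\pi(2n)=2n$, then $\pi(2n-1)=2n-1$ is forced and $\pi=(\pi',2n-1,2n)$ with $\pi'\in\mathfrak{D}^4_{2n-2}(1243)$; conversely every such $\pi'$ extends this way (appending two larger increasing entries creates neither a new deficiency nor a $1243$-occurrence), so this case contributes exactly $|\mathfrak{D}^4_{2n-2}(1243)|$. If $\pi(2n)<2n$, then position $2n$ is a deficiency, so $\pi(2n)=2k$ is even. Combining $\pi(2n-1)\in\{2n-1,2n\}$ with the block decomposition of $\pi(2)\cdots\pi(2n)$, I would show that the entry $2n$ can only occupy position $2$ or position $2n-1$, and that in either situation $k=1$, i.e. $\pi(2n)=2$. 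Thus $\pi$ is either $(1,\phi,2n,2)$ with $\phi$ a $132$-avoiding arrangement of $\{3,\dots,2n-1\}$ in positions $\{2,\dots,2n-2\}$ obeying the Dumont-4 deficiency rule (``$\pi(i)<i\Rightarrow i$ and $\pi(i)$ both even''), or $(1,2n,\psi,2n-1,2)$ with $\psi$ such an arrangement of the block $\{3,\dots,2n-2\}$ in positions $\{3,\dots,2n-2\}$.

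The crux is to count these restricted $132$-avoiding arrangements, via two lemmas. Lemma A: a $132$-avoiding arrangement of a block $\{3,\dots,m\}$ in positions $\{3,\dots,m\}$ that obeys the deficiency rule must be the identity. I would induct on $m$ using the block decomposition: the maximum $m$ sits first or last, because if it sat at an interior position the last two positions would receive small values and both be forced to be deficiencies, and two consecutive positions cannot both be even; and the maximum cannot sit first, since then the value $3$ would land at some later (hence deficiency) position at an even position with an even value, contradicting that $3$ is odd. Hence $m$ is last and induction finishes, so the ``$\pi(2)=2n$'' sub-case contributes exactly $1$. Lemma B: the number of $132$-avoiding, deficiency-rule-obeying arrangements of $\{3,\dots,M+1\}$ in positions $\{2,\dots,M\}$ equals $M-1$. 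Again the maximum lies first or last; if last, deleting it recurses; if first, the remaining block arrangement of $\{3,\dots,M\}$ in positions $\{3,\dots,M\}$ is the identity by Lemma A. So the count $\beta(M)$ satisfies $\beta(M)=\beta(M-1)+1$ with $\beta(2)=1$, giving $\beta(M)=M-1$; hence the ``$\pi(2n-1)=2n$'' sub-case contributes $\beta(2n-2)=2n-3$.

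Assembling the pieces gives $|\mathfrak{D}^4_{2n}(1243)|=|\mathfrak{D}^4_{2n-2}(1243)|+(2n-3)+1=|\mathfrak{D}^4_{2n-2}(1243)|+(2n-2)$, and since $|\mathfrak{D}^4_0(1243)|=1$ this telescopes to $1+\sum_{j=1}^{n}(2j-2)=1+2\binom n2=n^2-n+1$, which also equals $|\mathfrak{D}^4_{2n}(1324)|$ by the preceding theorem. The main obstacle is the structural analysis in the case $\pi(2n)<2n$: pinning down that $2n$ must occupy position $2$ or $2n-1$ and that $\pi(2n)=2$, and then proving Lemmas A and B. All of these reduce to tracking which positions the $132$-structure forces to be deficiencies and then exploiting the rigid parity constraints of Dumont-4 permutations — in particular that adjacent positions are never both even and that the small odd values can never sit at deficiency positions.
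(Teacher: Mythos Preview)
Your argument is correct and the recursion closes as stated; the one soft spot is the ``interior position'' step in Lemmas A and B. As written, the claim that the last \emph{two} positions are both deficiencies only covers the case where the maximum sits at a position $\le m-2$ (resp. $\le M-2$). When the maximum sits at the penultimate position, there is only one position to its right; but then that position receives the value $3$, which is odd, so the deficiency rule is still violated. Adding this one-line observation completes both lemmas.

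Your route is genuinely different from the paper's. The paper does not enumerate $\mathfrak{D}^4_{2n}(1243)$ directly at all: it strips off the initial $1$, notes that on the remaining $(2n-1)\times(2n-1)$ board the Dumont-4 constraint becomes ``deficiencies are odd values at odd positions,'' which is invariant under reflection about the antidiagonal, and that this reflection swaps $213$-avoidance with $132$-avoidance. This gives a one-line bijection $\mathfrak{D}^4_{2n}(1324)\leftrightarrow\mathfrak{D}^4_{2n}(1243)$ and imports the count $n^2-n+1$ from the preceding theorem. Your approach, by contrast, is a self-contained structural analysis: you pin down that in the nontrivial case $\pi(2n)=2$ and $2n$ sits at position $2$ or $2n-1$, and you enumerate the two resulting families via Lemmas A and B. The paper's proof is shorter and explains the equality with $|\mathfrak{D}^4_{2n}(1324)|$ conceptually; yours is longer but yields an explicit description of every permutation in $\mathfrak{D}^4_{2n}(1243)$ without appealing to the $1324$ result.
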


In other words, patterns $1324$ and $1243$ are Wilf-equivalent on Dumont permutations of the fourth kind.

\begin{proof}
This enumeration is the same as that of $\mathfrak{D}^4_{2n}(1324)$ since the removal of the entry $1$ from patterns $1324$ and $1243$ yields the patterns $213$ and $132$ that are each other's reflections with respect to the antidiagonal.  That is, $213$ and $132$ are inverses of reversals of complements of each other, and thus their respective avoidance classes are enumerated by the same sequence.

To obtain a permutation in $\mathfrak{D}^4_{2n}(1243)$ from a permutation in $\mathfrak{D}^4_{2n}(1324)$, remove the entry ``$1$" and reflect the remaining entries from  about the antidiagonal. This maps blocks onto blocks, with their diagonals mapping onto the diagonals of the images of those blocks.  Lastly, add $1$ to every entry and prepend the value $1$ that was removed. See Figure \ref{fig:big16x16_irc} for an example. Note that reflection about the antidiagonal applied above implies that any permutation $\pi\in\mathfrak{D}^4_{2n}(1243)$ is uniquely given by $\pi^{-1}(2)$ and $\pi(2)\in[2,\pi^{-1}(2)]$.
\end{proof}

\begin{example}
\label{example:D^4_16(1243)}
Reflecting the 1324-avoiding permutation in Figure \ref{fig:big16x16} as in Theorem \ref{thm:1243}, we obtain a 1243-avoiding Dumont-4 permutation.
%
%
\begin{figure}[!h]
\begin{center}
\begin{tikzpicture}[scale = 0.4]
\draw[very thin, color = gray, scale = 0.7] (0,0) grid (16,16);
 \foreach \position in {(1,1), (5,5), (7,7), (9,9), (11,11), (13,13), (15,17), (17,19), (19,21), (21,23), (25, 25), (27, 27), (29, 29), (31,31)}
    \filldraw [scale = 0.35, color = black] \position circle (8pt);
 \foreach \position in {(23,3)} 
    \filldraw [scale = 0.35, color = red] \position circle (8pt);
 \foreach \position in {(3,15)} 
    \filldraw [scale = 0.35, color = blue] \position circle (8pt);
\draw [line width = 2pt, color = red, scale = 0.7] (12,12) rectangle (16,16);
\draw [line width = 2pt, color = red, scale = 0.7] (1,2) rectangle (11,12);
\draw [line width = 2pt, color = blue, dashed, scale = 0.7] (2.1,2.1) rectangle (7,7);
\draw [line width = 2pt, color = blue, dashed, scale = 0.7] (7, 8) rectangle (10.9,11.9);
\end{tikzpicture} 
	\caption {Permutation diagram of
	$\rho = 	1\ 8\ 3\ 4\ 5\ 6\ 7\ 9\ 10\ 11\ 12\ 2\ 13\ 14\ 15\ 16 		
					\in \mathfrak{D}^4_{16}(1243)$.}
	\label{fig:big16x16_irc}
\end{center}
\end{figure}
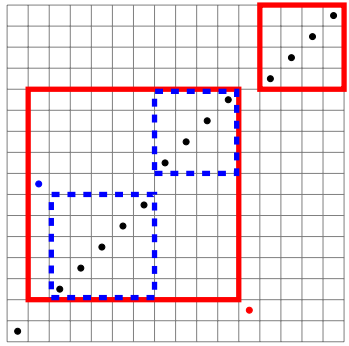
\end{example}

\subsubsection{$\mathfrak{D}^4_{2n}(1423)$}

As noted earlier, since the entry following $1$ is not $2$, it follows that $\mathfrak{D}^4_{2n}(1423)=\mathfrak{D}^4_{2n}(312)$.

Since our result for this pattern involves a continued fraction, we need a remark on notation.

\begin{notation}
For convenience, the following notation will be used for continued fractions:
\[
\cfrac{a}{\alpha \pm \cfrac{b}{\beta \pm \cfrac{c}{\gamma \pm \ddots}}} \quad = \quad 
\frac{a}{\alpha} \,\, \genfrac{}{}{0pt}{}{}{\pm} \,\, \frac{b}{\beta} \,\, \genfrac{}{}{0pt}{}{}{\pm} \,\, \frac{c}{\gamma} \,\, \genfrac{}{}{0pt}{}{}{\pm} \,\, \cdots
\]
\end{notation}


Additionally, we will need to use the truncations of the even and odd parts of the Catalan generating function $C(z)=\sum_{n=0}^{\infty}{C_n z^n}$. Define 
\[
C_{e,2m}=C_{e,2m}(z) =  \sum_{i=0}^{m} C_{2i}z^{2i} 
\quad \text{and} \quad 
C_{o,2m+1}=C_{o,2m+1}(z) =  \sum_{i=0}^{m} C_{2i+1}z^{2i+1},
\]
where we set these functions equal to $0$ when $m<0$.

\begin{theorem}
For all $k \ge 1$, the ordinary generating function for $|\mathfrak{D}^4_{2n}(1423)|$ is $(R_1/z)\circ\sqrt{z}=(zR_1/z^2)\circ\sqrt{z}$, where the sequence of functions $zR_{2k+1}$, $k\ge 0$, satisfies the recurrence relation
\begin{equation} \label{eq:d4-312-ogf}
zR_{2k+1} = \frac{z^2C_{e,2k}}{(1-zC_{o,2k-1})^2} \,\,
 \genfrac{}{}{0pt}{}{}{-} \,\,
\frac{z^2C_{e,2k}}{1-zC_{o,2k+1}} \,\,
 \genfrac{}{}{0pt}{}{}{-} \,\,
\frac{z^2C_{e,2k}^2}{1} \,\,
 \genfrac{}{}{0pt}{}{}{-} \, \,
zR_{2k+3}.
\end{equation} 
\end{theorem}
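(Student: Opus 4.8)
The plan is to analyze $\mathfrak{D}^4_{2n}(1423)=\mathfrak{D}^4_{2n}(312)$ by decomposing a $312$-avoiding Dumont-4 permutation according to the position and value of its largest entry $2n$ (or, more precisely, by a first-return/block decomposition tied to the fixed points forced by the structure). Since $\pi(1)=1$ always and $312$-avoidance forces strong structure — everything to the left of a given entry that will play the role of ``$3$'' must be smaller than everything after a corresponding ``$1$'' — I expect the permutation to split into a prefix block that is itself a (shifted) Dumont-4 $312$-avoider and a suffix block of the same type, except that the parity constraints of Dumont-4 permutations (deficiencies only at even positions with even values, and the forced endpoint behavior $\pi(2n-1)\in\{2n-1,2n\}$) interact with this decomposition in a way that distinguishes even-length from odd-length sub-blocks. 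This is exactly why the recurrence is indexed by odd subscripts $2k+1$ and why truncated even/odd parts $C_{e,2m}$, $C_{o,2m+1}$ of the Catalan generating function appear: the ``free'' sub-blocks that are unconstrained by the surrounding structure are ordinary $312$-avoiding (hence Catalan-counted) permutations, but the parity of their length is pinned down by where they sit relative to the even/odd scaffolding of the ambient Dumont-4 permutation.

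First I would set up a bijective or generating-function decomposition: write $F(z)=\sum_n |\mathfrak{D}^4_{2n}(312)|z^{2n}$ and track a refined statistic — say, the length of the maximal initial segment of forced fixed points, or equivalently the value $\pi^{-1}(2n)$ — leading to a family of generating functions $R_{2k+1}(z)$ where $2k+1$ records this refined parameter. The key algebraic step is to show that passing from the parameter $2k+1$ to $2k+3$ (i.e., peeling off one more layer of the block structure) produces precisely the continued-fraction step in~(\ref{eq:d4-312-ogf}): the term $z^2C_{e,2k}/(1-zC_{o,2k-1})^2$ comes from a leading block together with an unconstrained even-length $312$-avoider inserted into a ``geometric'' sum of odd-length unconstrained blocks (the square arises because such a block can be flanked on two sides); the middle terms $-z^2C_{e,2k}/(1-zC_{o,2k+1})$ and $-z^2C_{e,2k}^2/1$ are the inclusion–exclusion corrections that remove configurations double-counted by the geometric-series expansion or that violate the deficiency-parity constraint; and the tail $-zR_{2k+3}$ is the recursive part handling the next layer. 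I would verify each of these four terms by a careful case analysis on the relative order of the block boundaries with respect to even versus odd positions.

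Then I would close the recursion: show that $zR_{2k+1}\to 0$ (as a formal power series, its lowest-degree term has order growing with $k$) so the continued fraction is well-defined, and that the base relation $F(z^2)\cdot$ (appropriate shift) $= R_1/z$ recovers the claimed $(R_1/z)\circ\sqrt{z}=(zR_1/z^2)\circ\sqrt{z}$. The substitution $z\mapsto\sqrt z$ is the bookkeeping device that converts ``length $2n$'' into ``$n$''; I would simply check that the functional equation satisfied by $R_1$ matches, term by term in low degree, the known small values $|\mathfrak{D}^4_{2n}(312)|$ for $n=0,1,2,3,\dots$ as a sanity check.

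\textbf{Main obstacle.} The hard part will be isolating the correct refined statistic and proving that its decomposition yields \emph{exactly} the three correction terms with the stated exponents and the squared denominator, rather than some equivalent but differently-grouped expression. The squaring $(1-zC_{o,2k-1})^{-2}$ and the bare $z^2C_{e,2k}^2$ term strongly suggest a ``two independent sides'' phenomenon that is easy to get wrong by a sign or by conflating two cases; the inclusion–exclusion that reconciles the geometric expansion over odd-length unconstrained blocks with the Dumont-4 parity constraint is where I expect the bulk of the genuine combinatorial work, and where an explicit worked example (continuing the $n=6$ diagram in Figure~\ref{fig:big12x12_1432}-style notation but for pattern $312$) would be essential to keep the cases straight.
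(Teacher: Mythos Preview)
Your plan diverges from the paper's proof in a way that leaves a genuine gap: you have not identified the structure that actually produces the four-term continued-fraction step, and your guesses about what each term ``means'' (inclusion--exclusion corrections, a two-sides squaring) do not match how the recurrence arises.

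Concretely, the paper does \emph{not} decompose on the largest entry or on a statistic of the original permutation such as the initial run of fixed points or $\pi^{-1}(2n)$. Instead, after removing the initial $1$, it studies the iterated $312$-block decomposition on the \emph{smallest} entry of each sub-block, and the key observation is that the sub-blocks fall into \emph{four} parity types, not one: a block has an auxiliary parameter $m$ (the length of the contiguous run of allowable cells in its bottom row, i.e., the offset of the relevant subdiagonal), and one must track the parities of $m$ and of the block side $n$ separately. This yields four families of generating functions $P_{2k}, R_{2k+1}, S_{2k}, T_{2k+1}$ (for the EE, NE, EN, NN parity types), each satisfying a \emph{linear} equation coming from whether the bottom entry sits in an even or odd column; the coefficients are the truncated Catalan pieces $C_{e,\bullet}$ and $C_{o,\bullet}$ because the sub-block to the left of the bottom entry is fully unconstrained and of bounded, parity-fixed size. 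The continued-fraction step \eqref{eq:d4-312-ogf} then drops out by \emph{algebraically eliminating} $P_{2k}$, $S_{2k}$, and $T_{2k+1}$ from this $4\times 4$ linear system to express $R_{2k+1}$ in terms of $R_{2k+3}$. In particular, the squared denominator $(1-zC_{o,2k-1})^2$ is just the product of the identical factors appearing when one solves the $P$- and $T$-equations; it is not a ``two independent sides'' combinatorial phenomenon, and there is no inclusion--exclusion anywhere.

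So the missing idea in your proposal is the four-board classification and the resulting coupled system. Without it, you will not be able to pin down why the index jumps by $2$ (from $2k+1$ to $2k+3$), nor why the three intermediate levels carry exactly the numerators $z^2C_{e,2k}$, $z^2C_{e,2k}$, $z^2C_{e,2k}^2$ and denominators $(1-zC_{o,2k-1})^2$, $1-zC_{o,2k+1}$, $1$. Your ``main obstacle'' paragraph is honest about this, but the resolution is not a more careful case analysis of your decomposition; it requires introducing the auxiliary $P,S,T$ families first.
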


This yields the generating function for $|\mathfrak{D}^4_{2n}(1423)|$ in the form of a continued fraction.


We consider a Dumont-4 permutation avoiding $1423$, and we are interested in the parity of the smallest entry in each block using block decomposition after the ``$1$''.  Note, since every Dumont-4 permutation begins with the entry ``$1$'', avoiding $1423$ is the same as avoiding $312$ by the Dumont-4 permutation with the ``$1$'' removed from the beginning of the permutation.  Therefore we analyze the block decomposition of the resulting permutation diagram, ignoring the initial block of ``$1$''.

To analyze the blocks resulting from the iterations of the block decompositon, we will need an auxiliary parameter, namely, the length $m$ of the maximal contiguous segment of allowed cells in the bottom row. Our cases are further subdivided according to the parity of $m$ and $n-m$, where $n$ is the dimension of the (square) block. We will refer to the cells on the line $y=x-m$ as the \emph{$m$-th subdiagonal} of a board and call the cells below the $m$-th subdiagonal \emph{$m$-deficiencies} (so, deficiencies as defined earlier are 0-deficiencies in this terminology).

Now, if the bottom row of a block is an odd row in the starting diagram of a Dumont-4 permutation then there is no $m$-deficiency in that bottom row.  If the bottom row of a block is an even row in the starting diagram, then there may be an $m$-deficiency in that bottom row.  Given the definition of the Dumont-4 permutations, in the boards resulting from the repeated block decomposition, all cells  above or to the left of the $m$-th subdiagonal are allowed, whereas the positions and values of the possible $m$-deficiencies are parity-restricted as described below:
\begin{description}
\item[EE blocks:] $m=2k$, $n-m$ is even (so $n$ is even), and all $m$-deficiencies are even values in odd positions;
\item[NE blocks:] $m=2k+1$, $n-m$ is even (so $n$ is odd), and all $m$-deficiencies are even values in even positions;
\item[EN blocks:] $m=2k$, $n-m$ is odd (so $n$ is odd), and all $m$-deficiencies are odd values in even positions;
\item[NN blocks:] $m=2k+1$, $n-m$ is odd (so $n$ is even), and all $m$-deficiencies are odd values in odd positions.
\end{description}

We denote the blocks mnemonically using ``E'' for even and ``N'' for ``not even,'' i.e. odd. However, for convenience in working with generating functions, we also let the $EE$ (resp. $NE, EN$, and $NN$) block with parity-restricted positions and values of $m$-deficiencies be represented by the generating function $P_m=P_m(z)$ (resp. $R_m=R_m(z),S_m=S_m(z)$, and $T_m=T_m(z)$), and refer to that block as a $P$-board (resp. $R$-board, $S$-board, and $T$-board).  See Figure \ref{312_odd_even row}, where blue dots are odd positions and red dots are even positions in the starting $\mathfrak{D}^4$-permutation diagram.



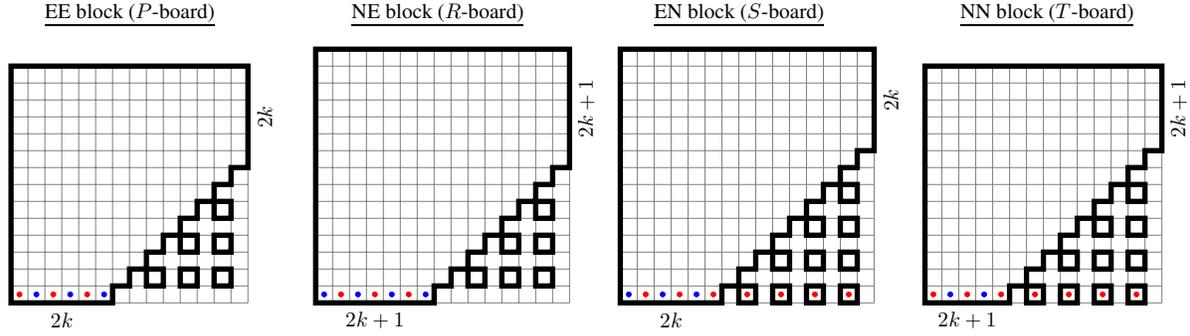
\begin{figure}[!ht]
\begin{center}
\begin{tikzpicture}[scale = 0.45]
\newcommand{\mysize}{0.5}

\begin{scope}
\draw[very thin, scale = \mysize, color = gray] (0,0) grid (14,14);
\foreach \x in {6} \foreach \y in {0}
\draw [line width = 2pt, scale = \mysize, color = black] (0,0) -- (\x,\y) -- (\x,\y+1) -- (\x+1,\y+1) -- (\x+1,\y+2) -- (\x+2,\y+2) -- (\x+2,\y+3) -- (\x+3,\y+3) -- (\x+3,\y+4) -- (\x+4,\y+4) -- (\x+4,\y+5) -- (\x+5,\y+5) -- (\x+5,\y+6) -- (\x+6,\y+6) -- (\x+6,\y+7) -- (\x+7,\y+7) -- (\x+7,\y+8) -- (\x+8,\y+8) -- (\x+8,\y+14) -- (0,\y+14) -- cycle;
\foreach \x in {8,10,12} \foreach \y in {1}
{\draw [line width = 2pt, scale = 0.5, color = black] (\x,\y) rectangle (\x+1, \y+1);}
\foreach \x in {10,12} \foreach \y in {3}
{\draw [line width = 2pt, scale = 0.5, color = black] (\x,\y) rectangle (\x+1, \y+1);}
\foreach \x in {12} \foreach \y in {5}
{\draw [line width = 2pt, scale = 0.5, color = black] (\x,\y) rectangle (\x+1, \y+1);}
\foreach \position in {(0.5,0.5), (2.5, 0.5), (4.5,0.5)} 
    \filldraw [scale = \mysize, color = red] \position circle (4pt);
 \foreach \position in {(1.5,0.5), (3.5, 0.5), (5.5,0.5)} 
    \filldraw [scale = \mysize, color = blue] \position circle (4pt);
\node[scale=0.8] at (1.5,-0.5) {$2k$};
\node[scale=0.8, rotate=90] at (7.5, 5.5) {$2k$};
\node[scale=0.8] at (3.5, 8.5) {\underline{EE block ($P$-board)}};
\end{scope}

\begin{scope}[xshift=9cm]
\draw[very thin, scale = \mysize, color = gray] (0,0) grid (15,15);
\foreach \x in {7} \foreach \y in {0}
\draw [line width = 2pt, scale = \mysize, color = black] (0,0) -- (\x,\y) -- (\x,\y+1) -- (\x+1,\y+1) -- (\x+1,\y+2) -- (\x+2,\y+2) -- (\x+2,\y+3) -- (\x+3,\y+3) -- (\x+3,\y+4) -- (\x+4,\y+4) -- (\x+4,\y+5) -- (\x+5,\y+5) -- (\x+5,\y+6) -- (\x+6,\y+6) -- (\x+6,\y+7) -- (\x+7,\y+7) -- (\x+7,\y+8) -- (\x+8,\y+8) -- (\x+8,\y+15) -- (0,\y+15) -- cycle;
\foreach \x in {9,11,13} \foreach \y in {1}
{\draw [line width = 2pt, scale = \mysize, color = black] (\x,\y) rectangle (\x+1, \y+1);}
\foreach \x in {11,13} \foreach \y in {3}
{\draw [line width = 2pt, scale = \mysize, color = black] (\x,\y) rectangle (\x+1, \y+1);}
\foreach \x in {13} \foreach \y in {5}
{\draw [line width = 2pt, scale = \mysize, color = black] (\x,\y) rectangle (\x+1, \y+1);}
\foreach \position in {(0.5,0.5), (2.5, 0.5), (4.5,0.5), (6.5,0.5)} 
    \filldraw [scale = \mysize, color = blue] \position circle (4pt);
 \foreach \position in {(1.5,0.5), (3.5, 0.5), (5.5,0.5)} 
    \filldraw [scale = \mysize, color = red] \position circle (4pt);
\node[scale=0.8] at (1.75,-0.5) {$2k+1$};
\node[scale=0.8, rotate=90] at (8, 5.75) {$2k+1$};
\node[scale=0.8] at (3.6, 8.5) {\underline{NE block ($R$-board)}};
\end{scope}

%
%

\begin{scope}[xshift=18cm]
\draw[very thin, scale = \mysize, color = gray] (0,0) grid (15,15);
\foreach \x in {6} \foreach \y in {0}
\draw [line width = 2pt, scale = \mysize, color = black] (0,0) -- (\x,\y) -- (\x,\y+1) -- (\x+1,\y+1) -- (\x+1,\y+2) -- (\x+2,\y+2) -- (\x+2,\y+3) -- (\x+3,\y+3) -- (\x+3,\y+4) -- (\x+4,\y+4) -- (\x+4,\y+5) -- (\x+5,\y+5) -- (\x+5,\y+6) -- (\x+6,\y+6) -- (\x+6,\y+7) -- (\x+7,\y+7) -- (\x+7,\y+8) -- (\x+8,\y+8) -- (\x+8,\y+9) -- (\x+9,\y+9) -- (\x+9,\y+15) -- (0,\y+15) -- cycle;
\foreach \x in {7,9,11,13} \foreach \y in {0}
{\draw [line width = 2pt, scale = \mysize, color = black] (\x,\y) rectangle (\x+1, \y+1);}
\foreach \x in {9,11,13} \foreach \y in {2}
{\draw [line width = 2pt, scale = \mysize, color = black] (\x,\y) rectangle (\x+1, \y+1);}
\foreach \x in {11, 13} \foreach \y in {4}
{\draw [line width = 2pt, scale = \mysize, color = black] (\x,\y) rectangle (\x+1, \y+1);}
\foreach \x in {13} \foreach \y in {6}
{\draw [line width = 2pt, scale = \mysize, color = black] (\x,\y) rectangle (\x+1, \y+1);}
\foreach \position in {(0.5,0.5), (2.5, 0.5), (4.5,0.5)} 
    \filldraw [scale = \mysize, color = blue] \position circle (4pt);
 \foreach \position in {(1.5,0.5), (3.5, 0.5), (5.5,0.5), (7.5, 0.5), (9.5, 0.5), (11.5, 0.5), (13.5,0.5)} 
    \filldraw [scale = \mysize, color = red] \position circle (4pt);
\node[scale=0.8] at (1.5,-0.5) {$2k$};
\node[scale=0.8, rotate = 90] at (8, 6) {$2k$};
\node[scale=0.8] at (3.5, 8.5) {\underline{EN block ($S$-board)}};
\end{scope}

\begin{scope}[xshift=27cm]
\draw[very thin, scale = \mysize, color = gray] (0,0) grid (14,14);
\foreach \x in {5} \foreach \y in {0}
\draw [line width = 2pt, scale = \mysize, color = black] (0,0) -- (\x,\y) -- (\x,\y+1) -- (\x+1,\y+1) -- (\x+1,\y+2) -- (\x+2,\y+2) -- (\x+2,\y+3) -- (\x+3,\y+3) -- (\x+3,\y+4) -- (\x+4,\y+4) -- (\x+4,\y+5) -- (\x+5,\y+5) -- (\x+5,\y+6) -- (\x+6,\y+6) -- (\x+6,\y+7) -- (\x+7,\y+7) -- (\x+7,\y+8) -- (\x+8,\y+8) -- (\x+8,\y+9) -- (\x+9,\y+9) -- (\x+9,\y+14) -- (0,\y+14) -- cycle;
\foreach \x in {6,8,10,12} \foreach \y in {0}
{\draw [line width = 2pt, scale = \mysize, color = black] (\x,\y) rectangle (\x+1, \y+1);}
\foreach \x in {8,10,12} \foreach \y in {2}
{\draw [line width = 2pt, scale = \mysize, color = black] (\x,\y) rectangle (\x+1, \y+1);}
\foreach \x in {10,12} \foreach \y in {4}
{\draw [line width = 2pt, scale = \mysize, color = black] (\x,\y) rectangle (\x+1, \y+1);}
\foreach \x in {12} \foreach \y in {6}
{\draw [line width = 2pt, scale = \mysize, color = black] (\x,\y) rectangle (\x+1, \y+1);}
\foreach \position in {(0.5,0.5), (2.5, 0.5), (4.5,0.5), (6.5,0.5), (8.5, 0.5), (10.5,0.5), (12.5, 0.5)} 
    \filldraw [scale = \mysize, color = red] \position circle (4pt);
 \foreach \position in {(1.5,0.5), (3.5,0.5)} 
    \filldraw [scale = \mysize, color = blue] \position circle (4pt);
\node[scale=0.8] at (1.25,-0.5) {$2k+1$};
\node[scale=0.8, rotate=90] at (7.5, 5.75) {$2k+1$};
\node[scale=0.8] at (3.6, 8.5) {\underline{NN block ($T$-board)}};
\end{scope}

\end{tikzpicture} 
\end{center}

\caption{Blocks whose bottom row was odd (even) in the starting $\mathfrak{D}^4$-permutation diagram}
\label{312_odd_even row}
\end{figure}

\begin{proof}
We now produce the generating function for the enumeration sequence of $\mathfrak{D}^4_{2n} (1423) = \mathfrak{D}^4_{2n} (312)$.  To begin, note that a $P$-board and $T$-board may be empty, while an $R$-board and $S$-board cannot be empty.  This is due to the fact that $P$-boards and $T$-boards have even dimensions whereas  $R$-boards and $S$-boards have odd dimensions. 

Consider the recurrence relations for the generating functions corresponding to each of the four blocks. We use the position of the entry $1$ in the block to produce those. Since a permutation in each block avoids $312$, any value to the left of $1$ must be less that any value to the right of $1$. For the $P$-board with $m=2k$, $k\ge 1$, we have the generating function $P_{2k}(z)$, the recurrence relation for which has three summands:
\begin{itemize}
\item The $P$-board may be empty, which corresponds to the summand ``$1$".  
\item If the $P$-board is nonempty, consider the entry in the bottom row. If it is in an odd position $\le 2k-1$, the factor of $z$ corresponds to the bottom row entry, in this case a blue dot on the $P$-board in Figure~\ref{312_odd_even row}. The block to the left of the blue dot must be a square of even dimension $\le 2k-2$ with all cells allowed, and the block to the right of the blue dot is an $S$-board with $m=2k$, correponding to the generating function $S_{2k}$.  This yields the summand ${\color{red}z C_{e,2k-2} S_{2k}}$.
\item Lastly, consider the entry in the bottom row that is in an even position $\le 2k$.  Again, the factor $z$ corresponds to the bottom row entry, in this case is a red dot on the $P$-board in Figure~\ref{312_odd_even row}. The block to the left of the red dot must be a square of odd dimension $\le 2k-1$ with all cells allowed, and the block to the right of the red dot is a $P$-board with $m=2k$, corresponding to the generating function $P_{2k}$.  This yields the summand ${\color{blue}z C_{o,2k-1}P_{2k}}$.
\end{itemize}

Thus, the recurrence formula for our generating function $P_{2k}$ is given by 
\[
P_{2k} = 1 + {\color{red}z C_{e,2k-2} S_{2k}} + {\color{blue}z C_{o,2k-1}P_{2k}}.
\]


We find the remaining recurrence formulas for $R_{2k+1}, S_{2k}, T_{2k+1}$ in the same fashion, which results in the following system of equations:

\begin{equation} \label{eq:PRST}
\begin{cases}
\begin{aligned}
P_{2k} &= 1 + {\color{red}z C_{e,2k-2} S_{2k}} + {\color{blue}z C_{o,2k-1}P_{2k}}\\
R_{2k+1} &= \phantom{1+\ \ } {\color{blue}z C_{e,2k} T_{2k+1}} + {\color{red}z C_{o,2k-1} R_{2k+1}}\\
S_{2k} &= \phantom{1+\ \ } {\color{blue}z C_{e,2k-2} P_{2k}} + {\color{red}z R_{2k+1} S_{2k}}\\
T_{2k+1} &= 1 + {\color{red}z  P_{2k+2} R_{2k+1}} + {\color{blue}z C_{o,2k-1} T_{2k+1}}
\end{aligned}
\end{cases}
\end{equation}


Now, consider the sequence $\{|\mathfrak{D}^4_{2n}(1423)|\}_{n\ge 0}=\{|\mathfrak{D}^4_{2n}(312)|\}_{n\ge 0}$. We claim that its ordinary generating function is $R_1/z$. Indeed, removing the top row and the rightmost column of the $R$-board with $m=1$ yields exactly the board of allowed cells in a Dumont-4 permutation.

Now consider the function $R_1/z$. Solving the system of equations \eqref{eq:PRST}  for $R_{2k+1}$, $k\ge 0$, yields the following recursive formula:
\[
R_{2k+1} = \frac{zC_{e,2k}}{(1-zC_{o,2k-1})^2} \,\,
 \genfrac{}{}{0pt}{}{}{-} \,\,
\frac{z^2C_{e,2k}}{1-zC_{o,2k+1}} \,\,
 \genfrac{}{}{0pt}{}{}{-} \,\,
\frac{z^2C_{e,2k}^2}{1} \,\,
 \genfrac{}{}{0pt}{}{}{-} \,\, 
zR_{2k+3}.
\]

Multiplying both sides by $z$, we obtain for all $k\ge 0$:
\[
zR_{2k+1} = \frac{z^2C_{e,2k}}{(1-zC_{o,2k-1})^2} \,\,
 \genfrac{}{}{0pt}{}{}{-} \,\,
\frac{z^2C_{e,2k}}{1-zC_{o,2k+1}} \,\,
 \genfrac{}{}{0pt}{}{}{-} \,\,
\frac{z^2C_{e,2k}^2}{1} \,\,
 \genfrac{}{}{0pt}{}{}{-} \,\, 
zR_{2k+3}.
\]

Note, the term on the left and the last term on the right are of the same form with $k$ increasing by $1$. This yields the continued fraction representation for the generating function $R_1/z=zR_1/z^2=\sum_{n=0}^{\infty}{|\mathfrak{D}^4_{2n}(312)|z^{2n}}$. Substituting $\sqrt{z}$ for $z$, we get the generating function for $\{|\mathfrak{D}^4_{2n}(312)|\}_{n\ge 0}$.
\end{proof}

Also note that the numerators  of first three terms in the recurrence formula contain only the truncations of the even part of $C(z)$, and the denominators contain only the truncations of the odd part of $C(z)$. Moreover, all of those numerators and denominators are even functions. Furthermore, it is not difficult to see that the term $zR_{2k+3}$ in the recursive formula \eqref{eq:d4-312-ogf} does not contribute to the terms of degree at most 6 in $zR_{2k+1}$ (due to the factors of $z^2$ in the three numerators). Applying this observation iteratively, we see that deleting the term $zR_{2k+1}$ in the resulting recursive formula for $R_1/z$ does not affect the terms of degrees $2n\in[0,6k-2]$, i.e. $2n$ for $0\le n\le 3k-1$.

Finally, we note that the sequence $\{|\mathfrak{D}^4_{2n}(312)|\}_{n\ge 0}$ is \href{https://oeis.org/A343795}{A343795} in OEIS \cite{OEIS} and begins 
\[
1,1,3,10,39,174,872,4805,28474,178099,1160173,7803860,\dots .
\]
Note that these are the coefficients for $0\le n\le 11$, so $3k-1=11$ yields $k=4$, and therefore these terms can be found by expanding $R_1/z$ iteratively until we reach $zR_9$, then removing the term $zR_9$.




\section{A single occurrence of patterns in Dumont permutations}

The results of this section focus on enumeration of Dumont permutations with a single occurrence of certain patterns.

\subsection{One occurrence in Dumont-1 and Dumont-2 permutations}

We first introduce notation that is useful for going back and forth between sequences and their generating functions. Then we will review results on single occurrences of patterns in Dumont-1 and Dumont-2 permutations. Finally, we will prove a related result on a single occurrence of a pattern in Dumont-4 permutations.

\begin{notation}
For any ordinary generating function, let:
\[
A(z) \longleftrightarrow \{a_n\} \quad \text{if} \quad A(z) = \text{ogf}\{a_n\} = \sum_{n=0}^\infty a_n z^n.
\]
\end{notation}

\noindent For example, for the Catalan numbers we have $\{C_n\} \longleftrightarrow  C = C(z)$ and $C = 1 +  zC^2$.
Recall that
\[
C_n = \frac{1}{n+1} \binom{2n}{n},
\]
which implies that, for any $k \ge 1$,
\[
C^k \longleftrightarrow \frac{k}{n+k} \binom{2n+k-1}{n}.
\]


Likewise, for the central binomial coefficients $\displaystyle B_n = \binom{2n}{n}$ we have $\{B_n\} \longleftrightarrow  B = B(z)$ and $B = 1 +  2zBC.$  This implies that, for any $k \ge 1$, 
$$BC^k \longleftrightarrow \binom{2n+k}{n}.$$ 

\begin{theorem}[\cite{Mansour}] \label{thm:132;1} 
For all $n\ge 0$, there does not exist a Dumont-1 permutation containing $132$ exactly once.  That is,
\[
|\mathfrak{D}^1_{2n} (132;1)| = 0.
\]
\end{theorem}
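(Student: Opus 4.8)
The plan is to prove the equivalent statement that every Dumont-1 permutation which contains at least one occurrence of $132$ in fact contains at least two. So suppose for contradiction that $\pi\in\mathfrak{D}^1_{2n}$ has exactly one occurrence of $132$, at positions $p<q<r$ with values $a=\pi(p)<c=\pi(r)<b=\pi(q)$. The only structural input used throughout is the defining feature of Dumont-1 permutations restated in terms of parity: for $1\le i<2n$, the position $i$ is a descent exactly when $\pi(i)$ is even and an ascent exactly when $\pi(i)$ is odd, while $\pi(2n)$ is odd.

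First I would make a short chain of reductions pinning the unique occurrence down to a very rigid shape. (i) The ``$3$''-value $b$ must be even: if $b$ were odd then, since $q<r\le 2n$ gives $q<2n$, we would have $\pi(q+1)>b>c$, which forces $q+1<r$ (as $\pi(r)=c<b$), and then $(p,q+1,r)$ is a second occurrence. Hence $b$ is even and $\pi(q+1)<b$. (ii) Examining $\pi(p+1)$ forces $q=p+1$: if $\pi(p)=a$ is even, then $p+1<q$ and $\pi(p+1)<a$, giving the second occurrence $(p+1,q,r)$; if $a$ is odd and $p+1<q$, then comparing $\pi(p+1)$ with $c$ produces either $(p,p+1,r)$ or $(p+1,q,r)$ as a second occurrence. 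So $\pi(p)=a$ is odd, $\pi(p+1)=b$ is even, and the ``$1$'' and ``$3$'' occupy consecutive positions. (iii) An easy induction on $s$ then shows that every entry in a position strictly between $q=p+1$ and $r$ is smaller than $a$: a value in $(a,b)$ there yields a second occurrence $(p,p+1,p+s)$, a value larger than $b$ there yields $(p,p+s,r)$ (and for $s=2$ no such value can occur, since $p+1$ is a descent). Thus $\pi$ has the shape $a,\ b,\ (\text{a block of values}<a),\ c$ on the interval $[p,r]$, with $a<c<b$ and $b$ even.

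The endgame — the step I expect to be the main obstacle — is to show that this rigid configuration cannot be completed to a Dumont-1 permutation without creating a second $132$. First I would rule out $a=1$: then the low block is empty, so $r=p+2$; the parity rule forces $c$ odd (a descent at $p+2$ would produce $(p,p+1,p+3)$, and $\pi(2n)$ being odd handles $p+2=2n$), and then the value $2$ has nowhere to go, since placing it left of $p$ gives $(\pi^{-1}(2),p+1,p+2)$ and placing it right of $p+2$ gives $(p,p+2,\pi^{-1}(2))$. Hence $a\ge 3$. Now one tracks the value $1$ and, more generally, the ``unused'' values below $a$ and above $b$: the value $1$ cannot sit left of $p$ (else $(\pi^{-1}(1),p+1,r)$), nor at $r$, and if it sits in the low block or to the right of $r$ one uses that the suffix of $\pi$ after position $\pi^{-1}(1)$ must be increasing — any later descent pairs with the value $1$ to form a new $132$ — which collides with the parity rule that even values force descents. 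Combined with a finite case analysis on where the remaining small and large values are placed, this yields the final contradiction. I expect this bookkeeping (exhausting the placements of the unused values) to be the delicate part, and it may read more cleanly if run as an induction on $n$ via a minimal counterexample, or on $r-p$; an alternative, if one is willing to invoke it, is to appeal directly to the structure theorem for $\mathfrak{D}^1_{2n}(132)$ underlying $|\mathfrak{D}^1_{2n}(132)|=C_n$ and observe that ``barely containing $132$'' is incompatible with it.
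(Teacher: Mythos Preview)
The paper does not supply its own proof of this theorem; it is simply quoted from Mansour~\cite{Mansour}. So there is no in-paper argument to compare against, and I can only assess your proposal on its own terms.

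Your reductions (i)--(iii) are correct and cleanly argued: the ``$3$'' must be even, the ``$1$'' and ``$3$'' must be adjacent with the ``$1$'' odd, and the intervening block has only values below $a$ (no induction is really needed there --- each position $p+s$ with $2\le s<r-p$ is handled directly by the two cases you list). Your treatment of the case $a=1$ is also complete and correct.

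The genuine gap is the case $a\ge 3$. You correctly observe that the entry $1$ must lie in the low block or to the right of $r$, and that the suffix beyond $\pi^{-1}(1)$ must be strictly increasing, hence consist entirely of odd values. But from there you stop at ``combined with a finite case analysis \dots\ this yields the final contradiction,'' and that case analysis is neither carried out nor obviously finite. Once you start placing the remaining small values $2,\dots,a-1$ and the large values above $b$, each placement generates further parity constraints and further forbidden $132$s; the interactions cascade (e.g.\ low-block entries versus $\pi(r+1)$, entries before $p$ versus entries after $r$), and it is not at all clear that a short, self-contained case split closes the argument. What you have written is a plan, not a proof, and the hard step is precisely the one you have left open. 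Two further reductions you did not record, and which would help, are that in fact $c=a+1$ (any value strictly between $a$ and $c$ has no legal position) and hence $c$ is even and $r<2n$; these tighten the picture but still do not finish it.

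If you do want to complete this route, the cleanest way is probably not an ad hoc case split but the inductive/minimal-counterexample framing you mention at the end. Alternatively --- and this is almost certainly what Mansour does --- the block decomposition underlying $|\mathfrak{D}^1_{2n}(132)|=C_n$ gives the result much more directly: a Dumont-1 permutation either avoids $132$ or, once you locate the maximum and split into the two blocks it determines, inherits at least two occurrences from the recursive structure. That argument is short and avoids the bookkeeping entirely.
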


The next few theorems list previous results \cite{Burstein_Ofodile, Ofodile} on Dumont-1 and Dumont-2 permutations with a single occurrence of certain patterns.

\begin{theorem}[\cite {Ofodile}]
For all $n\ge 0$, 
\[
|\mathfrak{D}^1_{2n} (312;1)| = 0 \quad \text{ and } \quad |\mathfrak{D}^1_{2n} (231;1)| = \binom {2n-2} {n-3}.
\]
\end{theorem}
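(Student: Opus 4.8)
The plan is to enumerate Dumont-1 permutations of length $2n$ that contain exactly one occurrence of $231$ by combining the structural description of $231$-avoiding Dumont-1 permutations (implicit in the $132$/$231$/$312$ result of \cite{Mansour}) with a ``single defect'' analysis: we want to understand precisely how a lone copy of $231$ can sit inside an otherwise $231$-avoiding Dumont-1 permutation, and then count the resulting configurations. First I would recall that $|\mathfrak{D}^1_{2n}(231)| = C_n$ and, more importantly, reconstruct the block-decomposition / Catalan-path structure that realizes this count; this will be the backbone of the argument, since it tells us exactly which positions are ``free'' and which are forced in the avoiding case.

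Next I would set up a recursive or generating-function decomposition that tracks a single marked occurrence of $231$. The natural device is to look at a distinguished entry of $\pi$ — for instance, the position of the entry $2n$, or the leftmost entry participating in the unique $231$-occurrence — and split $\pi$ into a left block, the pivot, and a right block. Because $\pi$ is Dumont-1, the parity constraints (even entries followed by something smaller, odd entries followed by something larger, last entry odd) restrict how the blocks interact; in particular the block containing the $231$-pattern must contribute exactly one occurrence while every other block is $231$-avoiding, and the cross-block contributions must vanish. Writing $F(z) = \sum_n |\mathfrak{D}^1_{2n}(231;1)|\, z^n$ and using $C = C(z)$ for the Catalan generating function, this decomposition should yield a linear equation for $F$ of the form $F = (\text{polynomial in } z, C)\cdot F + (\text{source term built from } C)$, where the source term records the minimal ways a single $231$ can be created (typically by a single ``descent of the wrong type'' relative to the avoiding structure). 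Solving gives $F$ as an explicit algebraic function of $C$.

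The final step is to extract coefficients. Using the identities stated in the excerpt, namely $C^k \longleftrightarrow \frac{k}{n+k}\binom{2n+k-1}{n}$ and $BC^k \longleftrightarrow \binom{2n+k}{n}$ with $B = 1 + 2zBC$, I would rewrite $F$ in terms of such building blocks and read off $[z^n]F$. The target $\binom{2n-2}{n-3}$ strongly suggests that $F$ collapses to (a shift of) a single power $z^3 C^{k}$ or $z^a B C^b$ after simplification — indeed $\binom{2n-2}{n-3}$ is, up to reindexing, the coefficient sequence of $BC^k$ evaluated at a shifted argument — so the algebra should telescope rather than produce a genuinely messy expression. As a sanity check I would verify the first few values ($n = 0,1,2,3$ give $0,0,0,1$, consistent with $\binom{2n-2}{n-3}$), since the smallest permutation containing $231$ at all has length $\ge 6$ in this class.

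\textbf{Main obstacle.} The hard part will be the bookkeeping in the middle step: precisely characterizing ``a single $231$'' inside the block structure, making sure no additional occurrences are inadvertently created by the interaction of the defect block with neighboring blocks, and correctly handling the parity (even/odd position and value) constraints that the Dumont-1 condition imposes on the pivot and on the boundaries between blocks. Getting the source term exactly right — which minimal local configuration produces one and only one copy of $231$, and with what $z$-degree and what surrounding Catalan factors — is where the argument can most easily go wrong, and it is what ultimately forces the answer to be the clean binomial $\binom{2n-2}{n-3}$.
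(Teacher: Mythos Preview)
The paper does not contain a proof of this theorem; it is quoted from Ofodile's thesis \cite{Ofodile}. The only content the paper adds is the generating-function identification
\[
\binom{2n-2}{n-3}=\binom{2(n-3)+4}{n-3}\ \longleftrightarrow\ z^{3}BC^{4},
\]
so there is no ``paper's own proof'' to compare against beyond this remark.

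On your proposal itself: it addresses only the $231$ half of the statement and says nothing about $|\mathfrak{D}^1_{2n}(312;1)|=0$. That part is not a corollary of the $231$ analysis and needs its own argument (compare Theorem~\ref{thm:132;1} for the analogous $132$ result). For the $231$ half, your plan is compatible with the paper's hint that the generating function is $z^{3}BC^{4}$, and your guess that $F$ should collapse to a single term of the form $z^{a}BC^{b}$ is on target. However, what you have written is a strategy, not a proof: the crucial step --- pinning down the ``source term'' that encodes a single $231$-defect inside the $231$-avoiding block structure while showing that no extra occurrences arise across block boundaries --- is precisely the step you flag as the main obstacle, and you have not carried it out. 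Until that local classification is made explicit (which minimal configurations create exactly one $231$, with which parity constraints at the block boundaries), the linear equation for $F$ cannot be written down, and the identification $F=z^{3}BC^{4}$ remains conjectural.
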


%

Note that 
$2n-2=2(n-3)+4$, so
\[
\binom {2n-2} {n-3} = \binom {2(n-3)+4} {n-3} \longleftrightarrow z^3BC^4.
\]

\begin{theorem}[\cite {Ofodile}]
For all $n\ge 4$,
\[
|\mathfrak{D}^1_{2n} (213;1)| = C_{n-2}+\binom {2n-4} {n-4}.
\]
\end{theorem}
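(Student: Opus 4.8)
The plan is to prove the equivalent generating-function identity
\[
\sum_{n\ge 0}|\mathfrak{D}^1_{2n}(213;1)|\,z^n \;=\; z^2C(z)+z^4B(z)C(z)^4 ,
\]
which is exactly $C_{n-2}+\binom{2n-4}{n-4}$ by the translation rules recorded in the excerpt: $\{C_{n-2}\}\longleftrightarrow z^2C$, and $\binom{2n-4}{n-4}=\binom{2(n-4)+4}{n-4}\longleftrightarrow z^4BC^4$ since $BC^k\longleftrightarrow\binom{2n+k}{n}$. I would build on the structural description of $213$-avoiding Dumont-1 permutations underlying the identity $|\mathfrak{D}^1_{2n}(213)|=C_{n-1}$ of \cite{Burstein_Restricted}. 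Decompose $\pi\in\mathfrak{D}^1_{2n}$ at the position of its largest entry as $\pi=(\alpha,2n,\beta)$. One checks the key local facts: $2n$ is itself never part of a $213$ except as the ``$3$'', which requires an inversion inside $\alpha$; the Dumont-1 conditions force every entry of $\alpha$ to be odd, and $213$-avoidance forces $\alpha$ to be \emph{increasing}; and, once $\alpha$ is an increasing run of odd values, the only remaining $213$-occurrences are of type (A) ``$2$'' in $\alpha$, ``$1$'' and ``$3$'' in $\beta$, or of type (B) entirely inside $\beta$. Encoding this as a bivariate series $F(z,u)=\sum_\pi u^{\#213(\pi)}z^{|\pi|/2}$, the specialization $F(z,0)$ reproduces $1+zC(z)$, the avoidance count.

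To extract $[u^1]F(z,u)$ I would classify the permutations carrying a single $213$ into two families. \emph{Family~I:} $\alpha$ is increasing (all odd) and the unique occurrence is a type-(B) occurrence inside $\beta$; unwinding the recursion and doing the Dumont-1 parity bookkeeping (tracking which block boundaries fall at even versus odd positions in the permutation diagram), one isolates the innermost block carrying the defect, the surrounding data being an unconstrained $213$-avoiding Dumont-1 structure; this should collapse to the summand $z^2C(z)$, i.e.\ to a bijection onto $\mathfrak{D}^1_{2n-2}(213)$, which is enumerated by $C_{n-2}$. \emph{Family~II:} the unique occurrence either uses $2n$ as the ``$3$'' (so $\alpha$ carries exactly one inversion and nothing else contributes) or is a type-(A) cross occurrence; in either case the defect pins down a prefix of odd values, a distinguished small value of $\beta$ to its right, and the rest splits into Catalan-enumerated blocks along the lines of Burstein's decomposition, the central-binomial factor $B$ recording the free interleaving that the cross-pattern permits. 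This should give the summand $z^4B(z)C(z)^4$. A useful cross-check, and possibly a shortcut, is that $z^4BC^4=z\cdot z^3BC^4$, so Family~II is counted by $|\mathfrak{D}^1_{2(n-1)}(231;1)|$ by the theorem stated just above; exhibiting a size-preserving-up-to-one bijection realizing $|\mathfrak{D}^1_{2n}(213;1)|=C_{n-2}+|\mathfrak{D}^1_{2n-2}(231;1)|$ would close the argument with that result used as a black box (note $231=213^{c}$, which suggests the bijection should be a ``reverse the suffix $\beta$'' type operation, repaired to respect the Dumont-1 conditions).

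The main obstacle is the case analysis in the second paragraph: correctly imposing the Dumont-1 parity conditions on the sub-blocks produced by the $2n$-decomposition — the prefix and suffix of a Dumont-1 permutation are \emph{not} Dumont-1 permutations after relabeling, because value shifts flip parities — and verifying that every permutation with exactly one $213$ is produced exactly once, in particular that Families I and II are disjoint and exhaustive and that the recursion inside Family~I terminates cleanly. Once the bivariate functional equation is set up correctly, extracting $[u^1]$ and simplifying to $z^2C+z^4BC^4$ is routine. I would finish by verifying the first few values directly: $|\mathfrak{D}^1_{2n}(213;1)|=1,1,3,11,\dots$ for $n=2,3,4,5$, which agree with $C_{n-2}+\binom{2n-4}{n-4}$ and which also explain the hypothesis $n\ge 4$ as the range in which the closed form is stable (the cases $n\le 3$ being checked by hand).
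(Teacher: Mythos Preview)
The paper does not prove this theorem; it is quoted from Ofodile's thesis \cite{Ofodile} in Section~4.1 as one of several background results on single occurrences of patterns in Dumont-1 and Dumont-2 permutations, stated without argument. So there is no proof in the paper to compare your proposal against.

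Taken on its own, your proposal is a reasonable plan but not yet a proof, and you acknowledge as much. The decomposition $\pi=(\alpha,2n,\beta)$ is the right starting point, and the observation that in the avoiding case $\alpha$ must be increasing (hence, by the Dumont-1 descent rule, consist of odd values) is correct---though note the logical order: it is the absence of a $213$ using $2n$ that forces $\alpha$ increasing, and only then does Dumont-1 force oddness, not the other way around. The substantive gaps are these. First, your partition into Families I and II is not cleanly set up: types (A) and (B) are defined only after assuming $\alpha$ is increasing, yet Family~II is also meant to absorb the case where the unique $213$ uses $2n$ as its ``$3$'', and there $\alpha$ has a descent (with even descent top), so the structural description is different and needs its own analysis---e.g.\ if $(a_i,a_j,2n)$ is the unique $213$ then every $b\in\beta$ with $b>a_i$ yields another one, forcing $a_i>\max(\beta)$, and one must also rule out type-(A) and type-(B) occurrences simultaneously. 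Second, the two key enumerative claims---that Family~I ``collapses to $z^2C$'' and Family~II ``gives $z^4BC^4$''---are asserted with ``should'' and no argument; these are the entire content of the theorem. The closing bijective idea, proving $|\mathfrak{D}^1_{2n}(213;1)|=C_{n-2}+|\mathfrak{D}^1_{2n-2}(231;1)|$ and invoking the previous theorem, is attractive, but the parity-flip obstacle you yourself flag (relabelling a block reverses the parities of values, so a complemented suffix is not Dumont-1) is precisely the difficulty, and you do not construct the repair.
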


Similarly,
\[
C_{n-2}+\binom {2n-4} {n-4} = C_{n-2}+\binom {2(n-4)+4} {n-4} \longleftrightarrow z^{2}C+z^4BC^4.
\]

\begin{theorem}[\cite{Burstein_Ofodile}]
For all $n \ge 2$,
\[
|\mathfrak{D}^1_{2n} (321;1)| = (n-1)^2.
\]
\end{theorem}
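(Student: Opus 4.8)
\emph{Proof proposal.} The plan is to show that any $\pi\in\mathfrak{D}^1_{2n}$ containing exactly one occurrence of $321$ has a completely rigid shape, and then to count the possible shapes. Write $\iota_\ell=(2,1,4,3,\dots,2\ell,2\ell-1)$ for the unique element of $\mathfrak{D}^1_{2\ell}(321)$, and let the single occurrence of $321$ in $\pi$ lie at positions $i_1<i_2<i_3$ with values $v_1>v_2>v_3$. The rigidity statement I aim for is: $v_1$ and $v_2$ are even, $v_3$ is odd, $v_1=v_2+2$ and $v_3=v_2-1$; the occurrence is at three consecutive positions, namely, writing $v_2=2k$, it occupies positions $(2k-1,2k,2k+1)$ with values $(2k+2,2k,2k-1)$; the prefix $\pi(1)\cdots\pi(2k-2)$ equals $\iota_{k-1}$; and the suffix $\pi(2k+2)\cdots\pi(2n)$ is an arbitrary arrangement of $\{2k+1\}\cup\{2k+3,2k+4,\dots,2n\}$ subject only to avoiding $321$ and to the Dumont-1 local rules (an even entry is immediately followed by a smaller one; an odd entry is immediately followed by a larger one or is the last entry; the last entry is odd).

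Every clause is proved by one device: if it fails, one exhibits a \emph{second} occurrence of $321$, contradicting uniqueness. If $\pi(i_1)$ or $\pi(i_2)$ were odd then, not being the last entry, it would be immediately followed by a larger entry $w$, and one of $(w,v_2,v_3)$ or $(v_1,w,v_3)$ is a second occurrence (the subcase $w\ge v_1$ being resolved via the forced descent below $\pi(i_1)$); if $v_3$ were even then $i_3$ is a descent and $(v_2,v_3,\pi(i_3+1))$ is a second occurrence, so $v_3$ is odd and $v_3\le v_2-1$. Granting the parities: if $v_1\ge v_2+4$ the even value $v_2+2$ is a descent top wherever it lies, and either $(v_2+2,v_2,v_3)$ or $(v_1,v_2+2,s)$, with $s$ its forced smaller successor, is a second occurrence; symmetrically $v_3\le v_2-3$ is excluded using the odd value $v_2-1$. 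Since $v_2$ is now a descent top, $\pi(i_2+1)<v_2$, so $(v_1,v_2,\pi(i_2+1))$ is a second occurrence unless $i_3=i_2+1$; and a parallel but more delicate argument — examining the entries between $i_1$ and $i_2$ and, when $i_2>i_1+1$, tracking a strictly decreasing chain of small entries forced to the left of $i_1$ — gives $i_2=i_1+1$ together with $i_1=v_2-1$ and the prefix being a permutation of $\{1,\dots,2k-2\}$ on positions $1,\dots,2k-2$; that prefix obeys the local rules and avoids $321$, hence equals $\iota_{k-1}$ by $|\mathfrak{D}^1_{2k-2}(321)|=1$. The suffix then consists of $\{2k+1\}\cup\{2k+3,\dots,2n\}$ and must avoid $321$, since any further occurrence would have to pair $2k+2$ with two smaller suffix entries while only $2k+1$ is below $2k+2$ there; conversely any $\pi$ of this shape is a Dumont-1 permutation whose unique $321$ is $(2k+2,2k,2k-1)$, and $k$ ranges over $\{1,\dots,n-1\}$ (for $k=n$ the triple would need the nonexistent position $2n+1$).

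It remains to count, for each $k$, the admissible suffixes. Subtracting $2k$ from every entry turns one into a $321$-avoiding arrangement of $\{1,3,4,5,\dots,2M\}$ (with $M=n-k$, the value $2$ being absent) that obeys the Dumont-1 local rules and ends in an odd entry, and I claim there are exactly $2M-1$ of these. One proves this by induction on $M$ (base case $M=1$: the word $1$): the largest entry $2M$ cannot be last; everything to its right is forced to be an increasing run of odd values; everything to its left is a shorter $321$-avoiding word obeying the local rules, ending in an odd entry and containing every remaining even value; conditioning on the position of $2M$ gives the recursion, which evaluates to $2M-1$. Summing over $k$,
\[
|\mathfrak{D}^1_{2n}(321;1)|=\sum_{k=1}^{n-1}\bigl(2(n-k)-1\bigr)=\sum_{j=1}^{n-1}(2j-1)=(n-1)^2 .
\]
The main obstacle is the rigidity statement: each clause relies only on ``produce a second occurrence,'' but the arguments for $v_2$ even, for $v_3=v_2-1$, and especially for consecutiveness and for locating the prefix fan out into several position subcases that must all be handled, the decreasing-chain argument being the least routine; the induction enumerating the suffixes is the other place requiring care, since deleting the value $2$ destroys the alternating-parity pattern underlying $|\mathfrak{D}^1_{2\ell}(321)|=1$.
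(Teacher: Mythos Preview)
The paper does not actually prove this theorem; it is quoted as a prior result from the talk \cite{Burstein_Ofodile}, with no argument given. So there is no proof in the paper to compare your approach against, and I can only assess your proposal on its own merits.

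Your overall strategy --- pin down the rigid shape $(\iota_{k-1},\,2k+2,\,2k,\,2k-1,\,\text{suffix})$ and then count the admissible suffixes --- is correct, and both the structural claims and the suffix count $2M-1$ check out numerically. The rigidity portion is sketchy but, as you say, each clause is a ``produce a second occurrence'' argument; these can indeed all be completed with care.

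There is, however, a genuine gap in your induction for the suffix count. You propose to condition on the position of $2M$ and assert that ``everything to its left is a shorter $321$-avoiding word obeying the local rules \dots\ conditioning on the position of $2M$ gives the recursion.'' But the left part is \emph{not} of the same shape as the original problem: it contains \emph{all} of the even values $4,6,\dots,2M-2$ together with only \emph{some} of the odd values, so it is not an arrangement of $\{1,3,4,\dots,2M'\}$ for any smaller $M'$. Worse, the decomposition does not by itself rule out $321$ patterns that span the two parts: for $M=3$ the word $43\,|\,6\,|\,15$ has a $321$-free left part $43$ and an increasing right part $15$, yet $(4,3,1)$ is a $321$. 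And deleting $2M$ can break the Dumont-1 rules at the seam: in $41563$ (a valid suffix for $M=3$), removing $6$ gives $4153$, in which the odd entry $5$ is followed by the smaller $3$. So ``condition on the position of $2M$'' does not yield a clean self-recursion.

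A recursion that \emph{does} work conditions on the first entry of the suffix. One checks that any first entry other than $1$, $3$, or $4$ forces a $321$ via the later descent at $4$ (since $4$ must be followed by $1$ or $3$). The three surviving cases contribute, respectively, one arrangement (the rest is forced to be $\iota_{M-1}+2$), $g(M-1)$ arrangements (prefix $4,1$ followed by an arrangement of $\{3,5,6,\dots,2M\}$, which after subtracting $2$ is the same problem for $M-1$), and one arrangement (prefix $3,4,1$ followed by $\iota_{M-2}+4$). Hence $g(M)=g(M-1)+2$ with $g(1)=1$, giving $g(M)=2M-1$ and the desired total $\sum_{k=1}^{n-1}(2(n-k)-1)=(n-1)^2$.
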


\begin{theorem}[\cite{Burstein_Ofodile}]
For all $n \ge 2$,
\[
|\mathfrak{D}^2_{2n} (321;1)| = \frac{5}{n+3}\binom{2n}{n-2} \longleftrightarrow z^2 C^5.
\]
\end{theorem}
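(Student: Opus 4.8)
The plan is to determine the ordinary generating function $F(z)=\sum_{n\ge 0}|\mathfrak{D}^2_{2n}(321;1)|\,z^n$ and to show that $F(z)=z^2C^5$. The closed form is then immediate: with $k=5$ and $n$ replaced by $n-2$, the expansion $C^k\longleftrightarrow\frac{k}{n+k}\binom{2n+k-1}{n}$ recorded above gives $z^2C^5\longleftrightarrow\frac{5}{n+3}\binom{2n}{n-2}$.

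I would start from the structure underlying $|\mathfrak{D}^2_{2n}(321)|=C_n$. A $321$-avoiding permutation is the shuffle of its left-to-right maxima with the increasing sequence of its remaining entries, and in a Dumont-2 permutation no even position can be a left-to-right maximum, because $\pi(2i)<2i$. Combined with $\pi(2)=1$ and $\pi(2n-1)\in\{2n-1,2n\}$, this yields a recursive block decomposition of the permutation diagram of a member of $\mathfrak{D}^2_{2n}(321)$ whose generating function satisfies $C=1+zC^2$. I would set this up carefully, recording for each block which of its rows and columns are even or odd relative to the ambient permutation, in the spirit of the block analysis of $\mathfrak{D}^4_{2n}(1423)$ in the previous section.

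Next, given $\pi\in\mathfrak{D}^2_{2n}(321;1)$, let $(\pi(i),\pi(j),\pi(k))$, $i<j<k$, be its unique occurrence of $321$. Since any $\pi(i')>\pi(i)$ with $i'<i$ (resp.\ any $\pi(k')<\pi(k)$ with $k'>k$) would produce a second $321$, the entry $\pi(i)$ is a left-to-right maximum and $\pi(k)$ is a right-to-left minimum; and a short argument using $\pi(2n-1)\in\{2n-1,2n\}$ together with the even-deficiency condition shows that $k$ is even, so $\pi(k)$ is a genuine deficiency. I expect that the Dumont-2 constraints then pin down one further entry adjacent to the triple in the diagram (this is what upgrades a three-entry core to a length-$4$ rigid core, producing the factor $z^2$), and that deleting these four entries splits $\pi$ into exactly five sub-boards, each of which, after renormalization, is the allowed board of a $321$-avoiding Dumont-2-type permutation and hence is counted by $C$. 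Conversely, any choice of five such blocks, reassembled around the rigid core, should give a Dumont-2 permutation with exactly one $321$. This bijection yields $F(z)=z^2C^5$, equivalently $F(z)=C(C-1)^2$ since $zC^2=C-1$.

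The main obstacle is proving that this decomposition is a genuine bijection. Two things must be checked at once: that the even-position-deficiency constraint of $\pi$ distributes over the five sub-boards so that each is an arbitrary, unrestricted member of its Catalan family with no coupling between blocks; and that no further $321$ can occur inside a block, across a pair of blocks, or through the rigid core, so that every reassembly has exactly one occurrence. The parity bookkeeping for the sub-boards is the delicate point, just as in the $\mathfrak{D}^4_{2n}(1423)$ case. Should a fully bijective argument prove unwieldy, the fallback is to refine the count by a statistic — for instance the value $\pi(1)$, or the position of $1$ inside the relevant sub-board — obtain a finite system of functional equations for the refined generating functions in the style of \eqref{eq:PRST}, solve it, and verify that the solution equals $z^2C^5$.
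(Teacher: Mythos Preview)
The paper does not prove this theorem; it merely cites it from \cite{Burstein_Ofodile} (a conference talk) in the list of previously known results on single occurrences in Dumont-1 and Dumont-2 permutations. There is therefore no proof in the paper against which to compare your proposal.

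As for the proposal itself, it is explicitly a plan rather than a proof, and you have correctly located the real difficulty. The heuristic that $z^2C^5$ should come from a rigid core of four entries together with five independent Catalan blocks is plausible: it is consistent with the minimal example $4132\in\mathfrak{D}^2_4(321;1)$, where the whole permutation is the core and all five blocks are empty, and it is in the spirit of the short arguments in \cite{Burstein_Short,Zeilberger_lovely} for $|S_n(321;1)|\longleftrightarrow z^3C^6$. But the step ``I expect that the Dumont-2 constraints then pin down one further entry adjacent to the triple'' is speculation, not argument, and the two verifications you list (that the even-position-deficiency constraint distributes independently over the five sub-boards, and that reassembly never creates a second $321$) are precisely the substance of the proof and remain entirely undone. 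Your functional-equation fallback is reasonable in principle but likewise unexecuted. So the gap you yourself identify is real and unfilled; what you have is a credible outline whose hard part has not been attempted.
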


\begin{theorem}[\cite {Ofodile}]
For all $n\ge 2$,
\[
|\mathfrak{D}^2_{2n} (3142;1)| = \binom {2n-1} {n-2} \longleftrightarrow z^2BC^3.
\]
\end{theorem}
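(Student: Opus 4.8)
The plan is to leverage the known structure behind Theorem~\ref{3142_thm} (which gives $|\mathfrak{D}^2_{2n}(3142)|=C_n$) and to describe precisely the minimal way a Dumont-2 permutation can acquire a single occurrence of $3142$. First I would fix $\pi\in\mathfrak{D}^2_{2n}(3142;1)$ and let $p_1<p_2<p_3<p_4$ be the positions of its unique $3142$-occurrence, with values $v_2<v_4<v_1<v_3$ where $v_i=\pi(p_i)$. Because this occurrence is unique, the grid cut out by the four chosen rows and columns is severely restricted: the entries lying in any one of the resulting cells must themselves avoid $3142$, and no entry may sit in a cell from which it could play a missing role in a second $3142$ together with three of the core entries. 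I would enumerate exactly which cells can be nonempty, and then impose the Dumont-2 parity conditions $\pi(2i)<2i$ and $\pi(2i-1)\ge 2i-1$; these pin down the parities of $p_1,\dots,p_4$ and of $v_1,\dots,v_4$, and force the boundary behavior (recall $\pi(2)=1$ and $\pi(2n-1)\in\{2n-1,2n\}$ for every Dumont-2 permutation). The outcome should be a normal form: the four core entries together with a small, forced surrounding structure, plus three free regions and one ``two-sided'' region.

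Next I would translate this normal form into generating functions. Each of the three free regions is an arbitrary $3142$-avoiding Dumont-type subconfiguration, contributing a factor $C$; the ``two-sided'' region is filled by a configuration governed by a repeated binary choice, which is exactly the combinatorics encoded by the relation $B=1+2zBC$, contributing a factor $B$; and the rigid core, once the forced structure around it is accounted for, costs two units of semilength, i.e.\ a factor $z^2$. Multiplying, the ordinary generating function of $\{|\mathfrak{D}^2_{2n}(3142;1)|\}_{n\ge 0}$ is $z^2BC^3$, and then the identity $BC^k\longleftrightarrow\binom{2n+k}{n}$ recorded above gives $|\mathfrak{D}^2_{2n}(3142;1)|=\binom{2(n-2)+3}{n-2}=\binom{2n-1}{n-2}$ for $n\ge 2$. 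As a check, for $n=2$ one computes $\mathfrak{D}^2_4=\{4132,2143,3142\}$, so $\mathfrak{D}^2_4(3142;1)=\{3142\}$, matching $\binom{3}{0}=1$.

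The main obstacle is the case analysis that establishes the normal form rigorously: one must verify that the three avoiding regions are genuinely independent, so that no triple of entries drawn from distinct cells ever completes one of the core entries into a spurious second $3142$, and one must confirm that the parity restrictions leave the ``$B$-region'' unconstrained in exactly the way the relation $B=1+2zBC$ demands. This is where the pattern condition and the even/odd position-value condition of Dumont-2 permutations interact most delicately, and I do not expect a shortcut around checking the handful of dangerous cell-configurations one by one. An alternative route, should the normal form prove unwieldy, is to start from the bijection to Dyck paths underlying Theorem~\ref{3142_thm} and characterize the ``almost-Dyck'' objects corresponding to a single occurrence; but translating the parity conditions into that language would still be the crux.
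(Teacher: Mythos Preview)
The paper does not contain a proof of this statement: it is listed among the ``previous results'' of Section~4.1 and attributed to Ofodile's thesis, with no argument given here. So there is nothing in the present paper to compare your proposal against line by line.

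As to the proposal itself, what you have written is a strategy, not a proof. The outline---locate the unique $3142$ occurrence, determine which cells of the induced grid may be occupied, impose the Dumont-2 parity constraints, and read off a decomposition into one core of weight $z^2$, three independent $3142$-avoiding blocks (each contributing a factor $C$), and one block governed by a binary choice (contributing $B$)---is a plausible route and is consistent with the style of argument used for the companion results on $\mathfrak{D}^2_{2n}(321;1)$ and $\mathfrak{D}^2_{2n}(2143;1)$. Your sanity check at $n=2$ is correct. However, you explicitly acknowledge that the key step, the case analysis establishing the normal form and the independence of the three $C$-regions, is not carried out. Until those cells are enumerated and the claimed factorization is verified (in particular, that no cross-cell triple together with a core entry creates a second $3142$, and that the parity constraints really leave the $B$-region free in the required way), the argument is incomplete. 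The closed form $\binom{2n-1}{n-2}$ does follow from $z^2BC^3$ via the identity $BC^k\longleftrightarrow\binom{2n+k}{n}$ once that factorization is in hand, so the remaining work is entirely in the structural lemma you flag as the main obstacle.
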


\begin{theorem}[\cite{Burstein_Ofodile}]
For all $n \ge 2$,
\[
|\mathfrak{D}^2_{2n} (2143;1)| = a_n b_{n+1} + b_n a_{n+1} + a_{n-1}a_n,
\]
where
\[
a_{2k} = \frac{1}{2k+1} \binom{3k}{k}, \quad a_{2k+1} = \frac{1}{k+1} \binom{3k+1}{k},
\]
and
\[
b_{2k} = \binom{3k-3}{k-2}, \quad b_{2k+1} = 2 \binom {3k-2}{k-2}.
\]
\end{theorem}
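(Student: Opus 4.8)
The plan is to differentiate the structural decomposition underlying the avoidance result $|\mathfrak{D}^2_{2n}(2143)| = a_n a_{n+1}$. Recall that every $\pi\in\mathfrak{D}^2_{2n}$ satisfies $\pi(2)=1$ and $\pi(2n-1)\in\{2n-1,2n\}$; iterating the block decomposition forced by these constraints should exhibit a $2143$-avoider as a ``glued'' pair of two ternary-tree-like objects (the Fuss--Catalan-type objects counted by $a_n$), one of semilength $n$ and one of semilength $n+1$. Equivalently, at the level of generating functions, the avoidance count is built from $t(z^2)$ and $z\,t(z^2)^2$, where $t=1+zt^3$, so that $\sum_m a_{2m}z^m = t$ and $\sum_m a_{2m+1}z^m = t^2$. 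First I would reconstruct this decomposition in enough detail to see precisely which quadruples of entries can realize a $2143$-pattern, and how an occurrence is apportioned between the two factors and the junction that joins them.

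Next I would stratify the permutations with exactly one occurrence of $2143$ by the location of the unique offending quadruple, obtaining three mutually exclusive families matching the three summands. In two mirror-symmetric families the quadruple lies entirely inside one of the two factors, forcing that factor to be a ``minimally defective'' object --- a $2143$-avoider carrying a single controlled violation --- while the other factor remains a genuine avoider; these contribute $a_n b_{n+1}$ and $b_n a_{n+1}$, provided one shows the singly-defective objects of semilength $N$ are counted by $b_N$. In the third family the quadruple straddles the junction; deleting the one entry whose removal kills the pattern splits $\pi$ into two honest $2143$-avoiders whose semilengths sum to $2n-1$, giving $a_{n-1}a_n$. This trichotomy is exactly the Leibniz (product-rule) derivative of the ``(factor) $\times$ (glue) $\times$ (factor)'' decomposition with respect to a variable marking occurrences of $2143$.

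The key auxiliary fact is $b_N = \#\{\text{singly-defective semilength-}N\text{ objects}\}$, which I would prove via generating functions. Marking one ``defect'' in a ternary-tree-like object amounts to differentiating the defining relation $t=1+zt^3$, which introduces a central-ternary factor $\mathcal{B}=\mathcal{B}(z)$ --- the ternary analogue of the central-binomial series $B$ with $B=1+2zBC$ --- whose presence turns Catalan-type into ballot-type binomials: the coefficients of $\mathcal{B}(z^2)\,t(z^2)^3$ and $\mathcal{B}(z^2)\,t(z^2)^4$ are precisely $\binom{3k-3}{k-2}$ and $2\binom{3k-2}{k-2}$, the factor $2$ coming from the odd part $z\,t^2$ carrying two copies of $t$ into which the defect may be placed, compounded with the two choices $\pi(2n-1)\in\{2n-1,2n\}$. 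Summing the three disjoint families then yields $|\mathfrak{D}^2_{2n}(2143;1)| = a_n b_{n+1} + b_n a_{n+1} + a_{n-1}a_n$.

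The main obstacle, as always with ``exactly-one-occurrence'' counts, is controlling pattern interactions across the junction: I must verify that gluing a minimally defective factor to an avoiding factor produces a permutation with exactly one $2143$ and no additional occurrence spanning the boundary, and conversely that every one-occurrence permutation is produced exactly once by the classification. This forces a careful case analysis of the relative order of the entries at the block boundaries --- the same analysis that pins down the $2n-1=(n-1)+n$ bookkeeping in the third family and the factor of $2$ in $b_{2k+1}$ --- and is where essentially all of the real work lies; the generating-function derivative argument is formal once the decomposition and the defect-counting lemma are in hand.
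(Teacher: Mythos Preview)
The paper does not actually prove this theorem. It is listed in Section~2 among previously known results on Dumont permutations containing a single pattern occurrence, with the citation \cite{Burstein_Ofodile} (a 2011 conference talk) and no accompanying proof. So there is no argument in the paper to compare your proposal against.

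As for the proposal itself: your high-level strategy is reasonable and is indeed the natural ``product-rule'' heuristic one would try, given that the avoidance count factors as $a_n a_{n+1}$. The three-summand shape of the formula strongly suggests exactly the trichotomy you describe (defect in the left factor, defect in the right factor, defect straddling the junction). That said, what you have written is a plan rather than a proof. You explicitly defer the two substantive steps --- establishing the structural decomposition of $\mathfrak{D}^2_{2n}(2143)$ precisely enough to read off where a $2143$ can sit, and verifying that the gluing introduces no extra occurrences across the boundary --- and these are where essentially all the content lives. In particular, the claim that the cross-junction case contributes exactly $a_{n-1}a_n$ (with that specific shift) and the identification of $b_N$ with the ``singly-defective'' count both require the actual block decomposition from \cite{Burstein_Dyck} to be spelled out, not just invoked. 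Until those are done your argument is a credible outline but not yet a proof.
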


\subsection {A single occurrence of patterns in Dumont-4 permutations}

The results of this subsection focus on a single occurrence of patterns in Dumont-4 permutations.  To begin, let us consider a theorem.

\begin{theorem}[\cite {Noonan, Noonan_Zeilberger}] \label{Noo_Zeil}
\[
|S_n(123;1)| = \frac{3}{n} \binom{2n}{n-3}
\]
\end{theorem}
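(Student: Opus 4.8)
The plan is to reduce the statement to the generating-function identity
\[
|S_n(123;1)| \longleftrightarrow z^3 C^6 ,
\]
from which the closed form is immediate: applying the expansion $C^k \longleftrightarrow \frac{k}{n+k}\binom{2n+k-1}{n}$ recorded above with $k=6$ and shifting the index by $3$ gives $[z^n]\,z^3C^6 = \frac{6}{n+3}\binom{2n-1}{n-3}$, and the elementary identity $\binom{2n}{n-3}=\frac{2n}{n+3}\binom{2n-1}{n-3}$ rewrites this as $\frac{3}{n}\binom{2n}{n-3}$. Checking the first few terms $0,0,0,1,6,27,\dots$ against both expressions pins down the normalization.

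To establish $|S_n(123;1)| \longleftrightarrow z^3C^6$ I would start from the structure of $123$-avoiders: by Knuth's theorem such a permutation is a merge of two decreasing subsequences, is counted by $C_n$, and carries a standard bijection to Dyck paths of semilength $n$. A permutation $\sigma$ with exactly one occurrence of $123$ has longest increasing subsequence of length exactly $3$ (length $4$ would already force more than one occurrence), so its unique occurrence $\sigma_i<\sigma_j<\sigma_k$ is canonically singled out; deleting the middle entry $\sigma_j$ yields a $123$-avoiding permutation of size $n-1$, and recording the row and column in which $\sigma_j$ sat sets up a map whose fibres reduce the problem to a weighted enumeration over $S_{n-1}(123)$. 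There are two clean ways to push this through. The first is Noonan's route: construct an explicit bijection between $S_n(123;1)$ and the lattice paths of $2n$ unit up/down steps from $(0,0)$ to $(2n,6)$ whose partial sums are all strictly positive, and evaluate that family by the cycle lemma, which produces $\frac{6}{2n}\binom{2n}{n-3}$ directly; note that this is exactly the path family whose generating function is $z^3C^6$. The second is the Noonan--Zeilberger functional-equation method: introduce one catalytic variable tracking an appropriate statistic of $\sigma$ (for instance the least value still to be placed as one scans left to right), write down the resulting linear equation for the bivariate generating function, and solve it by the kernel method.

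The step I expect to be the main obstacle is a tempting but thorny third approach — a head-on dissection of the permutation diagram about the three marked points $(i,\sigma_i),(j,\sigma_j),(k,\sigma_k)$. Relative to the middle point $(j,\sigma_j)$, its strict southwest quadrant can contain only $(i,\sigma_i)$ and its strict northeast quadrant only $(k,\sigma_k)$, which forces most of the diagram to be empty and leaves only a handful of potentially nonempty regions; but those regions are not independent $123$-avoiders. Some of them are forced to be strictly decreasing, and there remain residual cross-region conditions: a point in one surviving region together with points in two others can create a second, forbidden, $123$. Disentangling this web of interactions so that the count collapses to the clean product $C^6$ is delicate, and it is precisely why Noonan works with the indirect lattice-path encoding rather than a direct decomposition. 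Reproducing that bijection (or, alternatively, the Noonan--Zeilberger kernel computation) is the real content of the proof; the arithmetic in the first paragraph then finishes it.
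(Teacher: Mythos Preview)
The paper does not prove this theorem; it is quoted as a known result of Noonan and Noonan--Zeilberger, with the remark that much shorter arguments were later given in \cite{Burstein_Short} and \cite{Zeilberger_lovely}. There is therefore no in-paper proof to compare against.

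Your proposal is a plan rather than a proof: you correctly reduce the closed form to the identity $|S_n(123;1)|\longleftrightarrow z^3C^6$, and you correctly name the two classical routes, but you carry neither of them out. What you yourself call ``the real content of the proof'' is left entirely to the references.

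More to the point, you misjudge your third approach. The ``head-on dissection of the permutation diagram about the three marked points'' that you label thorny is exactly what \cite{Burstein_Short} and \cite{Zeilberger_lovely} make work, cleanly and in well under a page: the middle entry $b$ of the unique $321$ is forced to be a fixed point, and splitting there yields two $321$-avoiders $\pi'$, $\pi''$ subject only to the endpoint conditions that $\pi'$ not end with its maximum and $\pi''$ not begin with its minimum; the identity $|S_n(321;1)|=C_{n+2}-4C_{n+1}+3C_n$ (equivalently the generating function $z^3C^6$) then drops out immediately. The cross-region interactions you anticipate dissolve once one splits at the fixed point $b$ rather than carving the diagram around all three points simultaneously. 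Since the present paper relies on precisely this decomposition in its proof of Theorem~\ref{thm:d4-321-1}, that is the argument most in keeping with the paper, and it is shorter than either route you sketch.
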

%
%
This result enumerates all permutations containing a single copy of pattern $321$. The original proof by both Noonan \cite {Noonan} and Noonan and Zeilberger \cite {Noonan_Zeilberger} uses a complicated induction and generating functions with multiple auxiliary parameters. A much shorter argument has been subsequently given by the first author in \cite{Burstein_Short}, which was then shortered further by Zeilberger \cite{Zeilberger_lovely} and recently generalized by B\'ona and the first author in \cite{Bona_Burstein}.  
Given the methods used in the proofs in \cite{Burstein_Short, Zeilberger_lovely}, as well as the corresponding permutation diagram of the decomposition of permutations in $S_n(321;1)$ as in \cite{Burstein_Short}, we will now prove a theorem first presented in \cite{Burstein_Jones}, enumerating a single occurrence of $321$ in Dumont-4 permutations.

\begin{theorem}[\cite{Burstein_Jones}] \label{thm:d4-321-1}
For all $n\ge 1$,
\[
\begin{aligned}
|\mathfrak{D}^4_{2n} (321;1)| &= |S_n (321;1)| + |S_{n+1} (321;1)|\\
&= \frac{3}{n}\binom{2n}{n-3}+\frac{3}{n+1}\binom{2n+2}{n-2}\\
&= C_{n+3} - 3C_{n+2} - C_{n+1} + 3C_n.
\end{aligned}
\]
\end{theorem}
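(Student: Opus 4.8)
\emph{Proof idea.} The plan is to reduce to the Noonan--Zeilberger enumeration (Theorem~\ref{Noo_Zeil}), which as observed there also counts permutations containing a single $321$, so that $|S_N(321;1)|=\tfrac{3}{N}\binom{2N}{N-3}$. Concretely, I would construct a length-respecting bijection
\[
\mathfrak{D}^4_{2n}(321;1)\ \longleftrightarrow\ S_n(321;1)\ \sqcup\ S_{n+1}(321;1),
\]
which immediately yields the first two lines of the statement, the third line then being a routine identity. The two inputs are: the Krattenthaler-style encoding of $\mathfrak{D}^4_{2n}(1432)=\mathfrak{D}^4_{2n}(321)$ by Dyck paths of semilength $n$ from the proof of Theorem~\ref{thm:1432}, and the permutation-diagram decomposition of $S_N(321;1)$ from Burstein's short proof \cite{Burstein_Short}, in which a one-$321$ permutation is a $321$-avoiding ``background'' (a merge of two increasing staircases) carrying a single localized ``defect,'' namely an adjacent transposition of two consecutive-valued non-left-to-right-maxima.

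First I would localize the defect inside the Dumont-4 structure. Let $\pi\in\mathfrak{D}^4_{2n}(321;1)$. As in the proof of Theorem~\ref{thm:1432}, each odd fixed point of $\pi$ that is a right-to-left minimum splits $\pi$ as $(\pi',\,2k+1,\,\pi''+2k+1)$; such a value lies below everything to its right and above everything to its left, so it enters no $321$, and moreover entries of an earlier block are always smaller than entries of a later block, so no $321$ can straddle two blocks. Hence the unique $321$ sits inside one ``irreducible'' block of this decomposition, every other block being $321$-avoiding; each such avoiding block is (a normalization of) a $321$-avoiding Dumont-4 permutation and contributes exactly the Catalan/Dyck data of Theorem~\ref{thm:1432}. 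So the enumeration factors as a product: a ``pointed'' version of the Catalan structure recording the $321$-avoiding background with one marked block, times the generating function $D(z)$ of a single defective irreducible block.

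The heart of the proof is classifying the defective block. Restricted to it, $\pi$ is a one-$321$ permutation still obeying the Dumont-4 parity rule (deficiencies are even values in even positions). Imposing those parities on the values and positions of the two transposed ``low'' entries and on the staircase they perturb, I expect exactly two admissible types of defective block to survive; under the ``lower the even right-to-left minima, then halve all run lengths'' operation of Theorem~\ref{thm:1432} one type produces a semilength-$n$ Dyck-type object with a single sub-diagonal step, and the other a semilength-$(n+1)$ such object, the size shift arising because the transposed pair may protrude past the boundary that halving would otherwise respect. Matching the two resulting families of full permutations with $S_n(321;1)$ and $S_{n+1}(321;1)$ via the classical correspondence between one-$321$ permutations and Dyck paths with one step below the diagonal then produces the displayed bijection. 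The delicate point, and the main obstacle, is checking that the parity constraints admit exactly these two cases and nothing more, and that the bifurcation is clean on the nose --- equivalently, that $C(z)^2 D(z)$ equals precisely the sum of the ordinary generating functions of $\{|S_n(321;1)|\}$ and $\{|S_{n+1}(321;1)|\}$.

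Finally, granting the bijection, the closed form is routine. Writing $f_N=\tfrac{3}{N}\binom{2N}{N-3}$ and $F(z)=\sum_{N\ge 0}f_N z^N$, one has $\sum_{n\ge 0}(f_n+f_{n+1})z^n=\tfrac{1+z}{z}F(z)$, and expanding $\sum_n (C_{n+3}-3C_{n+2}-C_{n+1}+3C_n)z^n$ via $C=1+zC^2$ together with the factorization $1-3z-z^2+3z^3=(1-z)(1+z)(1-3z)$ shows the two series coincide for $n\ge 1$ (they disagree only in the constant term, which is why the hypothesis is $n\ge 1$); alternatively $\tfrac{3}{n}\binom{2n}{n-3}+\tfrac{3}{n+1}\binom{2n+2}{n-2}=C_{n+3}-3C_{n+2}-C_{n+1}+3C_n$ can be verified directly as a hypergeometric identity. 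As a check, for $n=1,2,3$ both sides evaluate to $0,1,7$, consistent with a direct count --- for instance $1432$ is the unique element of $\mathfrak{D}^4_4$ containing $321$ exactly once.
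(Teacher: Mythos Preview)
Your overall target --- a bijection $\mathfrak{D}^4_{2n}(321;1)\to S_n(321;1)\sqcup S_{n+1}(321;1)$ assembled from the Dyck-path encoding of Theorem~\ref{thm:1432} and the decomposition of \cite{Burstein_Short,Zeilberger_lovely} --- is exactly what the paper does. But your execution diverges from the paper's at the crucial step, and the paper's route avoids precisely the obstacle you yourself flag as unresolved.

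You first localize the unique $321$ to an ``irreducible'' block of the odd-fixed-point decomposition and then \emph{hope} that parity constraints on that defective block leave exactly two admissible types, one mapping to semilength $n$ and one to $n{+}1$; you admit this classification is not carried out. The paper never introduces this block layer. Instead it observes directly that the middle entry $b$ of the unique $321$ is a fixed point of $\pi$, and bifurcates on the \emph{parity of $b$}. Splitting $\pi$ at $b$ in the style of \cite{Zeilberger_lovely} yields two $321$-avoiding pieces $\pi',\pi''$; the paper then shows, with small explicit surgeries depending on whether $b$ is even or odd, that these become $321$-avoiding Dumont-4 permutations $\rho',\rho''$ whose sizes sum to $2n{+}2$ (if $b$ even) or $2n{+}4$ (if $b$ odd). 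Applying the bijection of Theorem~\ref{thm:1432} halves those sizes, and recombining \`a la \cite{Burstein_Short} produces a single one-$321$ permutation of length $n$ or $n{+}1$ respectively. So the ``two cases'' you anticipate are precisely \emph{$b$ even vs.\ $b$ odd}, detected and handled on the whole permutation rather than on a localized block. Your pointed-Catalan factorization $C(z)^2 D(z)$ is therefore unnecessary, and making it rigorous would still require the very block classification you leave open. The final Catalan identity is routine, as you say.
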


It follows from Theorems \ref{Noo_Zeil} and \ref{thm:d4-321-1} that the generating function for the sequence $|\mathfrak{D}^4_{2n}(321;1)|$, $n\ge 0$, is
\[
z^3 C^6 + \frac{1}{z}\cdot z^3 C^6 = (z^2+z^3)C^6.
\]

\begin{proof}
Let $\pi \in \mathfrak{D}^4_{2n}(321;1)$. Suppose that the single occurrence of pattern $321$ is formed by values $c>b>a$ and $\pi=\tau_1c\tau_2b\tau_3a\tau_4$ for some strings $\tau_i$, $i=1,2,3,4$. Then $b$ is a fixed point. Let $\pi'$ and $\pi''$ be the patterns of $\tau_1c\tau_2a$ and $c\tau_3a\tau_4$, respectively, as in \cite{Zeilberger_lovely}.  Then $\pi'$ and $\pi''$ avoid $321$, $\pi'$ does not end on its top entry, and $\pi''$ does not start with its bottom entry. Moreover, the entry $a$, the ``1'' of the occurrence of $321$, is a deficiency and hence an even value in an even position in $\pi$. 

Consider two cases based on the parity of $b$, the entry ``$2$'' of the single $321$-occurrence in $\pi$.

\begin{enumerate}[\emph{Case} 1:]

\item \label{enum:b-even} Suppose $b$ is even, say, $b=2k$ for some $k\in[2,n-1]$ (note that the entry $b$ cannot be either $2$ or $2n$, since neither is involved as a ``2'' in an occurrence of $321$: the former because $1$ is a fixed point, the latter because $2n$ is the largest entry).

Then $|\pi'|=2k$ and $|\pi''|=2n-2k+1$. Recall that $a$ is even; moreover, it occupies the last position in $\pi'$, i.e. position $2k$, which is even. Therefore, $\pi'$ is also Dumont-4, and thus $\pi'\in\mathfrak{D}^4_{2k}(321)$. For what follows after this case analysis, let $\rho'=\pi'$.

Now consider the permutation $\rho''=(1,\pi''+1)$.  Note that the entry 2 in $\rho''$ is not a fixed point, since $\pi''$ does not begin with its lowest entry, thus 2 is a deficiency of $\rho''$. Therefore, $\rho''$ is also Dumont-4, and thus $\rho''\in\mathfrak{D}^4_{2n-2k+2}(321)$.

Note that the sizes of $\rho'$ and $\rho''$ add up to $2k+(2n-2k+2)=2n+2$ in this case.

\item \label{enum:b-odd} Suppose $b$ is odd, say, $b=2k+1$ for some $k\in[1,n-1]$.  Then $|\pi'|=2k+1$ and $|\pi''|=2n-2k$.  

Insert the new entry $2k+2$ in the second rightmost position of $\pi'$ to form a permutation $\rho'$.  Then $|\rho'|=2k+2$, which is even. Moreover, as in Case \ref{enum:b-even}, the entry $a$, which is at most $2k$ and even (as a deficiency), now occupies position $2k+2$, also even. Therefore, $\rho'\in\mathfrak{D}^4_{2k+2}(321)$.

Now form the permutation $\rho''$ from $\pi''$ in the following fashion: if $\pi''=(\pi_1, 1, \pi_2)$, then $\rho''=(1,3,\pi_1+3,2,\pi_2+3)$. Note that $|\rho''|=|\pi''|+2=2n-2k+2$ all deficiencies of $\rho''$ come from deficiencies of $\pi$. 

The entry $a$ in $\pi$ becomes the entry $2$ of $\rho''$, and hence also an even deficiency. Its position in $\rho''$ is $(\rho'')^{-1}(2)=\pi^{-1}(a)-(2k+1)+1+2=\pi^{-1}(a)-(2k-2)$ is also even, since $\pi^{-1}(a)$ is even. The entries of $\rho''$ that are at least $4$ are obtained by subtracting $2k-2$ from the corresponding entries of $\pi$. However, their position also shifted to the left by $2k-2$, so the parity of their positions and values as well as the placement relative to the diagonal (i.e. being a fixed point, excedance, or deficiency) remains the same. Therefore, $\rho''$ is a Dumont-4 permutation, and thus $\rho''\in\mathfrak{D}_{2n-2k+2}(321)$.

Note that the sizes of $\rho'$ and $\rho''$ add up to $(2k+2)+(2n-2k+2)=2n+4$ in this case.

\end{enumerate}


Now, given permutations $\rho'$ and $\rho''$, use the map of Theorem \ref{thm:1432} to produce two permutations $\sigma'$ and $\sigma''$, respectively, that avoid $321$. Note that $\rho'$ does not end on its top entry, and therefore neither does $\sigma'$. Likewise, $\rho''$ does not start with $12\dots$, and therefore $\sigma''$ does not start with $1$. Thus, the pair $\sigma'$ and $\sigma''$ are as in the proof in \cite {Burstein_Short} with the combined length of $(2n+2)/2=n+1$ or $(2n+4)/2=n+2$. Thus, we can combine them as in \cite {Burstein_Short} to produce a single permutation $\sigma$ of length $n$ or $n+1$ (depending on the parity of $b$) with a single occurrence of pattern $321$.

Therefore, the number of Dumont-4 permutations of length $2n$ that have a single 321-occurrence is equal to the number of permutations of length $n$ with a single 321-occurrence plus the number of permutations of length $n+1$ with a single 321-occurrence.  That is, $$|\mathfrak{D}^4_{2n} (321;1)| = |S_n (321;1)| + |S_{n+1} (321;1)|.$$  Finally, Zeilberger \cite {Zeilberger_lovely} showed that $|S_n (321;1)| = C_{n+2} - 4C_{n+1} + 3C_n$.  It follows that 
\[
|\mathfrak{D}^4_{2n} (321;1)| =( C_{n+2} - 4C_{n+1} + 3C_n)+ ( C_{n+3} - 4C_{n+2} + 3C_{n+1})
= C_{n+3} - 3C_{n+2} - C_{n+1} + 3C_n. \qedhere
\]
\end{proof}

\begin{example} 
Let $\pi = 135462 \in \mathfrak{D}^4_{6}(321;1)$.  Here, we see that the occurrence of $321$ is the subsequence $542$, where the ``$2$" is the entry $4$, an even fixed point. The map of the proof in \cite{Burstein_Short} yields $\pi'=1342$ and $\pi''=231$, so that $\rho'=1342$ and $\rho''=1342$ as well. Now applying the map of Theorem \ref{thm:1432} yields $\sigma'=21$ and $\sigma''=21$. Combining those again as in \cite{Burstein_Short} yields $\sigma=321$. Note that $|\sigma|=|\pi|/2$.

Similarly, let $\pi = 136254 \in \mathfrak{D}^4_{6}(321;1)$.  Here, we see that the occurrence of $321$ is the subsequence $654$, where the ``$2$" is the entry $5$, an odd fixed point. The map of the proof in \cite{Burstein_Short} yields $\pi'=13524$ and $\pi''=21$, so that $\rho'=135624$ and $\rho''=1342$. Now applying the map of Theorem \ref{thm:1432} yields $\sigma'=312$ and $\sigma''=21$. Combining those again as in \cite{Burstein_Short} yields $\sigma=4132$. Note that $|\sigma|=|\pi|/2+1$.
\end{example}




\section{Conjecture and conclusion}

\subsection {A conjecture on restricted Dumont-1 permutations}
\label {B-J Conjecture}

The table at the end of \cite[Section 3.1]{Burstein_Restricted} implies that no two Dumont-1 permutations of length $4$ (recall: $\mathfrak{D}^1_4 = \{2143, 3421, 4213\}$) are Wilf-equivalent.  However, we found that this is due to an incorrect calculation of $|\mathfrak{D}_{10}(3421)|$, which is, in fact, equal to $|\mathfrak{D}_{10}(2143)|$. Furthermore, we found that $|\mathfrak{D}_{10}(3421)|=|\mathfrak{D}_{10}(2143)|$ for $n \le 6$; an additional computation by Albert \cite{Albert_comm} showed that the two sequences are the same for $n\le 10$, i.e. for Dumont-1 permutations of length up to $20$.  Therefore, we conjecture that the two sequences are equinumerous for all $n$.

\begin{conjecture}[\cite{Burstein_Jones}]
The following Wilf-equivalence holds on Dumont-1 permutations:
\[
2143 \sim 3421 \quad \text{on} \quad \mathfrak{D}^1_{2n}, \quad \text{that is} \quad |\mathfrak{D}^1_{2n} (2143)| = |\mathfrak{D}^1_{2n} (3421)| \quad \text{for all $n\ge 0$}.
\]
\end{conjecture}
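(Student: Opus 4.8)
The plan is to prove $|\mathfrak{D}^1_{2n}(2143)|=|\mathfrak{D}^1_{2n}(3421)|$ by giving each class a recursive structural description, refined by a suitable auxiliary statistic, and then showing the two descriptions yield the same generating function --- ideally by extracting from them an explicit size-preserving bijection. A direct symmetry argument is not available: although $2143$ and $3421$ are Wilf-equivalent on all of $S_n$ (both lie in the $1234$-Wilf class), the set $\mathfrak{D}^1_{2n}$ is not closed under reverse, complement, or inverse, so the equivalence must be re-established from the internal structure of Dumont-$1$ permutations.

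The first step is to fix a decomposition compatible with the Dumont-$1$ condition. In any $\pi\in\mathfrak{D}^1_{2n}$ the largest value $2n$ is even, hence a descent top, so $\pi=\alpha\,(2n)\,\beta$ with $\beta$ nonempty; alternatively, since $2n-1$ is odd it must be followed by a larger value or end the permutation, so either $\pi(2n)=2n-1$ or the block $(2n-1)(2n)$ appears. I would condition on one of these features, track how the Dumont-$1$ constraint (positions tied to parities) redistributes to the factors after rescaling, and record how an occurrence of the forbidden pattern can straddle the cut. For $2143$: an occurrence using $2n$ as its largest entry (the ``$4$'', in position $3$ of the pattern) needs a descending pair ``$21$'' in $\alpha$ together with a ``$3$'' in $\beta$ lying above the ``$1$''; avoidance therefore forces $\alpha$ to be increasing or confines the small values of $\beta$. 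For $3421$: an occurrence using $2n$ as its largest entry (the ``$4$'', now in position $2$) needs a ``$3$'' before $2n$ and a descending pair ``$21$'' after it lying below that ``$3$''; avoidance forces the portion of $\beta$ below a threshold to be increasing or forces the prefix to be small. In both cases one is led to a recursion of the form ``sum over the cut point of (a weight) $\times$ (the count for a smaller Dumont-$1$ instance)'' carrying one extra parameter; the task is to choose the parameter (candidates: $\pi(1)$, the length of the initial increasing run, or $\pi^{-1}(1)$) so that the $2143$-recursion and the $3421$-recursion literally coincide, which would produce the bijection.

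The more robust route --- and the one I would push hardest --- is to use the ascent-run decomposition native to Dumont-$1$ permutations: every descent top being even forces the runs to interlock in a constrained way, and one can set up a finite system of functional equations for generating functions indexed by a local parameter, in the spirit of the $(P_m,R_m,S_m,T_m)$ system used in this paper for $\mathfrak{D}^4_{2n}(1423)$. Writing such a system for each of $2143$ and $3421$ and proving the two systems have equal total solution --- very plausibly after a change of variables encoding that $2143$ and $3421$ differ exactly by reversing the relative order of the top two values of a ``heavy'' block --- would finish the proof. A parallel computational step (extending the data well past $n=10$, identifying the sequence, and guessing a closed form; the product shape $a_na_{n+1}$ of the known count $|\mathfrak{D}^2_{2n}(2143)|$ hints that something comparable may hold here) would both sanity-check the approach and, if successful, reduce the conjecture to verifying that both sides satisfy the same algebraic equation.

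I expect the genuine obstacle to be the parity coupling: because a Dumont-$1$ permutation constrains a value's parity through the parity of its position, none of the usual ``delete the largest entry'' or ``insert an entry'' reductions stay inside $\mathfrak{D}^1_{2n}$ --- moving entries across a cut flips parities of positions and can destroy the Dumont-$1$ property --- and any bijection realizing $2143\leftrightarrow 3421$ must simultaneously repair this parity bookkeeping and reverse a local order relation. Finding a decomposition that respects the parity structure, the Dumont-$1$ condition, and the pattern all at once (rather than two of the three) is the crux; once such a decomposition is in hand, solving or matching the resulting generating functions should be routine.
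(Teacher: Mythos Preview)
The statement you are attempting is a \emph{conjecture} in the paper, not a theorem: the paper gives no proof and says explicitly that ``this conjecture has turned out to be very difficult to prove, and only some partial inroads are made as of this writing,'' adding that it was presented at the Permutation Patterns 2016 and 2018 open-problem sessions without subsequent progress. There is therefore no proof in the paper to compare your attempt against.

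What you have submitted is a research plan, not a proof. Each route you propose --- decomposing at the entry $2n$, setting up a $(P,R,S,T)$-style system of functional equations from an ascent-run decomposition, or guessing an algebraic equation from extended data --- is described entirely in the conditional (``would,'' ``should,'' ``I expect''), and none is actually carried out. The obstacle you flag in your final paragraph is exactly why the problem is open: no decomposition is known that simultaneously respects the Dumont-$1$ condition and both pattern-avoidance constraints in a way that makes the two recursions coincide. Until one of your routes produces an explicit system for each side together with a verification that the solutions agree, there is no argument to evaluate.

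One technical correction that bears on your plan: you write that a Dumont-$1$ permutation ``constrains a value's parity through the parity of its position'' and that ``moving entries across a cut flips parities of positions and can destroy the Dumont-$1$ property.'' This inverts the definition. The Dumont-$1$ condition is purely \emph{value}-based: even values are descent tops, odd values are ascent bottoms or terminal; positions play no role. (You may be thinking of Dumont-$2$ or Dumont-$4$ permutations, which are position-based.) The reason deleting $2n$ from $\pi=\alpha\,(2n)\,\beta$ need not yield a Dumont-$1$ permutation is not a position-parity shift but a local one: the last letter of $\alpha$ is odd (it ascended to $2n$), and after deletion it is adjacent to the first letter of $\beta$, which may be smaller, creating an odd descent top. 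Getting this mechanism right is essential if you intend to build a recursion around that cut.
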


Our conjecture is based on the following empirical observation. In all cases where sequences $|S_n(\pi_1)|$ and $|S_n(\pi_2)|$ are known to diverge (where $\pi_1$ and $\pi_2$ are patterns of the same length $k$), the first index at which the corresponding terms differ was found to be at most $2k-1$. Similarly, we expect Dumont permutations avoiding patterns of length $k$ to be Wilf-equivalent if their avoidance sequences coincide for avoiding permutations of length up to $4k-2$. In our case, the patterns are of length $k=4$, and the sequences coincide up to $n=10$, i.e. up to length $20$, greater than $4k-2=14$. Thus, we conjecture that they coincide for all $n\ge 0$.  See Table \ref{table:D1} for the sequence of the enumeration of $|\mathfrak{D}^1_{2n} (2143)| = |\mathfrak{D}^1_{2n} (3421)|$ up to $n = 10$.

\begin{table}[ht]
\begin{center}
\begin{tabular}{c||c|c|c|c|c|c|c|c|c|c|c}
	$n$ &0 & 1 & 2 & 3 & 4 & 5 & 6 & 7 & 8 &9 &10\\
	\hline 
	$|\mathfrak{D}^1_{2n} (2143)|$ & 1 & 1 & 2 & 7 & 36 & 239 & 1,892 & 17,015 & 168,503 & 1,799,272 & 20,409,644\\
	\hline 
	$|\mathfrak{D}^1_{2n} (3421)|$ & 1 & 1 & 2 & 7 & 36 & 239 & 1,892 & 17,015 & 168,503 & 1,799,272 & 20,409,644
\end{tabular}
\caption{Sequence of the cardinalities $|\mathfrak{D}^1_{2n} (2143)| = |\mathfrak{D}^1_{2n} (3421)|$}
\label{table:D1}
\end{center}
\end{table}


This conjecture has turned out to be very difficult to prove, and only some partial inroads are made as of this writing. One approach to proving such a conjecture is to refine it by considering distributions of some combinatorial statistics on both $\mathfrak{D}^1_{2n} (2143)$ and $\mathfrak{D}^1_{2n} (3421)$. For this, we will first need to define a generalized version of a permutation pattern.

\begin{definition}[\cite{Babson}]
A \emph{vincular} (or \emph{generalized}, or \emph{dashed}) permutation pattern is a pattern that allows the additional requirement that two (or more) adjacent letters in a pattern be also adjacent in the containing permutation.
\end{definition}

Note, in the pattern $2\text{-}31$, the ``$2$'' and ``$3$'' need not be adjacent, however the ``$3$'' and the ``$1$'' must be adjacent.  Now that we have the definition for vincular patterns, we can refine our conjecture slightly.  After studying the behavior of distributions of occurrences of various vincular patterns on $\mathfrak{D}^1_{2n}(2143)$ and $\mathfrak{D}^1_{2n}(3421)$, we were able to form the following conjecture.

\begin{conjecture}[\cite{Burstein_Jones}]
For all $n \ge 0$, we conjecture the following.
Let
\[
a_{n,k} = |\{\pi \in \mathfrak{D}^1_{2n} (2143) | (2\text{-}31)\pi = k\}|, \quad
b_{n,k} = |\{\pi \in \mathfrak{D}^1_{2n} (3421) | (13\text{-}2)\pi = k\}|,
\]
where $(2\text{-}31)\pi$ (resp. $(13\text{-}2)\pi$) is the number of occurrences of $2\text{-}31$ (resp. $13\text{-}2$) in $\pi$. Then $a_{n,k} = b_{n,k}=1$ for $k=\binom{n}{2}$, $a_{n,k} = b_{n,k}=0$ for $k>\binom{n}{2}$, and
\[
\sum_{k=0}^{m}a_{n,k} \ge \sum_{k=0}^{m} b_{n,k} \quad \text{ for } 0 \le m \le \binom{n}{2},
\]
with equality for $m = \binom{n}{2}$.
\end{conjecture}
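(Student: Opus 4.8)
The plan is to first record that this conjecture strictly strengthens the preceding Wilf-equivalence conjecture: setting $m=\binom{n}{2}$ in the displayed inequality and using that $a_{n,k}=b_{n,k}=0$ for $k>\binom{n}{2}$ gives $\sum_k a_{n,k}=\sum_k b_{n,k}$, i.e.\ $|\mathfrak{D}^1_{2n}(2143)|=|\mathfrak{D}^1_{2n}(3421)|$. Writing $A_n(q)=\sum_k a_{n,k}q^k$ and $B_n(q)=\sum_k b_{n,k}q^k$, the partial-sum inequalities together with the equality at the top are equivalent to the single assertion that $A_n(q)-B_n(q)=(1-q)\,U_n(q)$ for some polynomial $U_n$ with nonnegative coefficients (necessarily of degree $<\binom{n}{2}$). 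The most transparent route to all of this is to produce a bijection $\Phi\colon\mathfrak{D}^1_{2n}(2143)\to\mathfrak{D}^1_{2n}(3421)$ with $(13\text{-}2)\,\Phi(\pi)\ge(2\text{-}31)\,\pi$ for every $\pi$: bijectivity forces the top-equality, and since $\Phi$ weakly increases the statistic, $\Phi^{-1}$ maps $\{\sigma:(13\text{-}2)\sigma\le m\}$ injectively into $\{\pi:(2\text{-}31)\pi\le m\}$, which is precisely the claimed inequality $\sum_{k\le m}b_{n,k}\le\sum_{k\le m}a_{n,k}$. The two extremal assertions would still need a separate argument, but they are consistent with such a $\Phi$.

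A useful orienting observation is that on all of $S_n$ the vincular patterns $2\text{-}31$ and $13\text{-}2$ are interchanged by reversal, since $13\text{-}2$ is the reverse of $2\text{-}31$, so $(2\text{-}31)\pi=(13\text{-}2)\pi^{\,r}$ for every permutation $\pi$. Reversal itself is not $\Phi$ --- it sends $\mathfrak{D}^1_{2n}(2143)$ into the $3412$-avoiders rather than the $3421$-avoiders, and it does not preserve the Dumont-1 property --- but it identifies the correct pairing of statistics and is a natural object to deform. To build $\Phi$ I would exploit the parity rigidity of Dumont-1 permutations: in $\pi\in\mathfrak{D}^1_{2n}$ the descent positions are exactly the positions carrying even values and $\pi(2n)$ is odd, so $\pi$ has exactly $n$ descents and $n-1$ ascents, $(2\text{-}31)\pi$ is a sum of $n$ nonnegative per-descent contributions, and $(13\text{-}2)\pi$ is a sum of $n-1$ nonnegative per-ascent contributions. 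I would then set up a recursive block decomposition of $\mathfrak{D}^1_{2n}(2143)$ --- say by the position of the entry $2n$, or by a first-return of the permutation diagram to the diagonal, in the spirit of the length-three decompositions used by Mansour and by Burstein et al.\ --- together with a parallel decomposition of $\mathfrak{D}^1_{2n}(3421)$, and match the two block by block so that the per-descent $2\text{-}31$ bookkeeping on the source is transferred into the per-ascent $13\text{-}2$ bookkeeping on the target. A reasonable seed for this matching is one of the known bijections realizing the unrestricted Wilf-equivalence $|S_n(2143)|=|S_n(3421)|$ (a West-style generating-tree isomorphism, or a composition of shape-Wilf-equivalence maps with symmetries of the square), the task being to make it respect the Dumont-1 parity conditions.

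For the extremal claims --- that $a_{n,k}=b_{n,k}=1$ at $k=\binom{n}{2}$ and both vanish for larger $k$ --- I would argue directly, which should be the more tractable part. What is wanted is $\max_{\pi\in\mathfrak{D}^1_{2n}(2143)}(2\text{-}31)\pi=\binom{n}{2}$ with a unique maximizer, and likewise for $(13\text{-}2)$ on $\mathfrak{D}^1_{2n}(3421)$. Since $\binom{n}{2}=0+1+\dots+(n-1)$ has $n$ summands and there are exactly $n$ descents (respectively $\binom{n}{2}=1+2+\dots+(n-1)$ has $n-1$ summands and there are $n-1$ ascents), the natural target is to order the descents, say by descent-top value, bound the contribution of the $r$-th one by $r-1$ using $2143$-avoidance, and conclude $(2\text{-}31)\pi\le\binom{n}{2}$; simultaneous equality in all of these bounds should pin down a single permutation, which one then checks lies in $\mathfrak{D}^1_{2n}(2143)$ and attains $\binom{n}{2}$. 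Uniqueness can be pursued by an exchange argument: whenever some descent contributes strictly fewer occurrences than its bound permits, a local modification should increase the $2\text{-}31$-count while preserving both the Dumont-1 property and $2143$-avoidance.

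The main obstacle is the construction of $\Phi$ with the statistic inequality. Producing \emph{any} bijection witnessing $|\mathfrak{D}^1_{2n}(2143)|=|\mathfrak{D}^1_{2n}(3421)|$ is itself open, and the further demand that $\Phi$ weakly convert $(2\text{-}31)$ into $(13\text{-}2)$ is considerably more rigid; the classical $S_n$ bijections are not visibly compatible with the parity conditions that define $\mathfrak{D}^1_{2n}$. If a full bijection proves elusive, two fallbacks are available. First, one could try to prove the polynomial identity $A_n(q)=B_n(q)$ --- and, ideally, the refinement $A_n(q)-B_n(q)=(1-q)U_n(q)$ with $U_n$ of nonnegative coefficients --- by showing that the two block decompositions satisfy a common, or suitably comparable, system of $q$-functional equations. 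Second, one could aim only at the dominance itself, by exhibiting, uniformly in $m$, an injection from $\{\pi\in\mathfrak{D}^1_{2n}(2143):(2\text{-}31)\pi>m\}$ into $\{\sigma\in\mathfrak{D}^1_{2n}(3421):(13\text{-}2)\sigma>m\}$ without requiring it to be a bijection.
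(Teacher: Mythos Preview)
The statement you were asked to address is a \emph{conjecture}, and the paper does not prove it. The paper merely records numerical evidence (Tables~\ref{table:cum_dist1}--\ref{table:cum_dist3}) and remarks that ``this conjecture has turned out to be very difficult to prove, and only some partial inroads are made as of this writing.'' So there is no proof in the paper to compare your proposal against.

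Your write-up is not a proof either, and to your credit you say so: you lay out a plan (build a bijection $\Phi$ that weakly increases the statistic, or failing that compare $q$-functional equations, or failing that build injections level by level) and then explicitly identify the main obstacle --- constructing any bijection $\mathfrak{D}^1_{2n}(2143)\to\mathfrak{D}^1_{2n}(3421)$ at all --- as open. Several of your structural observations are correct and useful (the reduction of dominance to $A_n(q)-B_n(q)=(1-q)U_n(q)$ with $U_n\ge 0$; the reversal link between $2\text{-}31$ and $13\text{-}2$ on $S_n$; the count of exactly $n$ descents and $n-1$ ascents in any $\pi\in\mathfrak{D}^1_{2n}$), and your sketch for the extremal claims $a_{n,\binom{n}{2}}=b_{n,\binom{n}{2}}=1$ is a plausible line of attack. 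But none of this closes the gap: you have outlined a research program, not supplied an argument, and in particular you have not carried out the bound-and-uniqueness step for the extremal values, nor exhibited the bijection or the injections on which the dominance inequality would rest.

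In short: the paper offers no proof, and your proposal, while sensible as a strategy, does not constitute one either. If you want to make partial progress that would count as new content, the extremal assertions ($a_{n,k}=b_{n,k}=1$ at $k=\binom{n}{2}$ and $=0$ beyond) look the most accessible; proving those rigorously would already go beyond what the paper establishes.
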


This is less optimal than statistic distributions that are equal to each other; however, of the statistics we studied, this relation is the only interesting one we found. Additionally, we conjecture that both sequences $(a_k)$ and $(b_k)$ are unimodal for each $n\ge 0$, equal at the tails of the distributions, while in the middle of the distributions we first have a block of $a_k > b_k$ followed by a block of $a_k < b_k$ with the switch in direction of the inequality occurring at approximately $k=2n-5$.  See Table \vref{table:cum_dist1} for $n=5$, Table \vref{table:cum_dist2} for $n=6$, and Table \vref{table:cum_dist3} for $n=7$.


\begin{table}[!h]
\begin{center}
\begin{tabular}{c||c|c|c|c|c|c|c|c|c|c|c||c}
	$k$  &0 & 1 & 2 & 3 & 4 & 5 & 6 & 7 & 8 &9 &10 & \textbf{Total}\\
	\hline 
	$a_{5,k}$ & 1 & 10 & 30 & 45 & 49 & 42 & 31 & 18 & 9 & 3 & 1 & \textbf{239}\\
	\hline
	$b_{5,k}$ & 1 & 10 & 29 & 44 & 48 & 43 & 32 & 19 & 9 & 3 & 1 & \textbf{239}\\
	\hline
	$a_{5,k} \, \openbox \, b_{5,k}$ & $=$ & $=$ & $>$ & $>$ & $>$ & $<$ & $<$ & $<$ & $=$ & $=$ & $=$
\end{tabular}
\caption{$|\{\pi \in \mathfrak{D}^1_{10} (2143) | (2\text{-}31)\pi = k\}|$ versus $|\{\pi \in \mathfrak{D}^1_{10} (3421) | (13\text{-}2)\pi = k\}|$}
\label{table:cum_dist1}
\end{center}
\end{table}


\begin{table}[!h]
\begin{flushleft}
\begin{tabular}{c||c|c|c|c|c|c|c}
	$k$  &0 & 1 & 2 & 3 & 4 & 5 & 6 \\
	\hline 
	$a_{6,k}$ & 1 & 15 & 70 & 160 & 246 & 298 & 303\\
	\hline
	$b_{6,k}$ & 1 & 15 & 65 & 147 & 228 & 284 & 302 \\
	\hline
	$a_{6,k} \, \openbox \, b_{6,k}$ & $=$ & $=$ & $>$ & $>$ & $>$ & $>$ & $>$
\end{tabular}
\end{flushleft}
%
\begin{flushright}
\begin{tabular}{c||c|c|c|c|c|c|c|c|c||c}
	$k$ &7 & 8 &9 &10 &11 &12 &13 &14 &15 & \textbf{Total}\\
	\hline 
	$a_{6,k}$ &268 & 208 & 145 & 89 & 49 & 24 & 11 & 4 & 1 & \textbf{1892}\\
	\hline
	$b_{6,k}$ &277 & 223 & 157 & 98 & 53 & 26 & 11 & 4 & 1 & \textbf{1892}\\
	\hline
	$a_{6,k} \, \openbox \, b_{6,k}$ & $<$ & $<$ & $<$ & $<$ & $<$ & $<$ & $=$ & $=$ & $=$
\end{tabular}
\end{flushright}
\caption{$|\{\pi \in \mathfrak{D}^1_{12} (2143) | (2\text{-}31)\pi = k\}|$ versus $|\{\pi \in \mathfrak{D}^1_{12} (3421) | (13\text{-}2)\pi = k\}|$}
\label{table:cum_dist2}
\end{table}

\begin{table}[!h]
\begin{flushleft}
\begin{tabular}{c||c|c|c|c|c|c|c|c|c}
	$k$  &0 & 1 & 2 & 3 & 4 & 5 & 6 & 7 & 8\\
	\hline 
	$a_{7,k}$ & 1 & 21 & 140 & 455 & 945 & 1497 & 1956 & 2215 & 2226\\
	\hline
	$b_{7,k}$ & 1 & 21 & 125 & 388 & 804 & 1294 & 1760 & 2089 & 2211 \\
	\hline
	$a_{7,k} \, \openbox \, b_{7,k}$ & $=$ & $=$ & $>$ & $>$ & $>$ & $>$ & $>$ & $>$ & $>$ 
\end{tabular}
\end{flushleft}
%
\begin{flushright}
\begin{tabular}{c||c|c|c|c|c|c|c|c|c|c|c|c|c||c}
	$k$  &9 &10 &11 &12 &13 &14 &15 &16 &17 &18 &19 &20 &21 & \textbf{Total}\\
	\hline 
	$a_{7,k}$ & 2032 & 1700 & 1317 & 948 & 641 & 410 & 249 & 140 & 72 & 32 & 13 & 4 & 1 & \textbf{17015}\\
	\hline
	$b_{7,k}$ & 2111 & 1840 & 1472 & 1091 & 750 & 482 & 288 & 158 & 78 & 34 & 13 & 4 & 1 & \textbf{17015}\\
	\hline
	$a_{7,k} \, \openbox \, b_{7,k}$ & $<$ & $<$ & $<$ & $<$ & $<$ & $<$ & $<$ & $<$ & $<$ & $<$ & $=$ & $=$ & $=$
\end{tabular}
\end{flushright}
\caption{$|\{\pi \in \mathfrak{D}^1_{14} (2143) | (2\text{-}31)\pi = k\}|$ versus $|\{\pi \in \mathfrak{D}^1_{14} (3421) | (13\text{-}2)\pi = k\}|$}
\label{table:cum_dist3}
\end{table}

\subsection{Conclusion}

In this paper, we have enumerated Dumont-4 permutations avoiding certain patterns, as well as Dumont-4 permutations containing a single occurrence of certain patterns. 
The Catalan numbers and powers of $2$ that occur in the enumeration of permutations avoiding three-letter patterns were also encountered in our results. However, for some patterns we could only find a generating function in the form of a continued fraction, as in \cite{RWZ}.

We also gave an intriguing conjecture regarding the Wilf-equivalence of a pair of patterns on Dumont permutations of the first kind. This conjecture has proven quite unyielding, and despite being presented twice at the open problem sessions at Permutation Patterns 2016 and 2018, progress on it is yet to be made. 

It would also be interesting to see if other Wilf-equivalences exist on Dumont permutations of the first kind avoiding a single four-letter pattern. The same question may be posed for the other kinds of Dumont permutations as well.


\subsection*{Acknowledgments}

The authors would like to thank Michael Albert and Lindsey-Kay Lauderdale for computing several terms of sequences $|\mathfrak{D}^1_{2n} (2143)|$ and $|\mathfrak{D}^4_{2n} (312)|$, as well as the anonymous referee for a thorough reading of the paper and many helpful comments and suggestions.


{\small

}


\begin{thebibliography}{99}

\bibitem{Albert_comm} M. Albert, personal communication, 2016.


\bibitem{Babson} E. Babson, E. Steingr\'imsson, Generalized permutation patterns and a classification of the Mahonian statistics, \emph{S\'em. Lothar. Combin.}, \textbf{44} (2000), Research Article B44b, 18 pp.


\bibitem{Bevan} D. Bevan, \emph{Permutation patterns: basic definitions and notation}, \href{https://arxiv.org/abs/1506.06673}{arxiv:1506.06673}.

\bibitem{Bona_Burstein} M. B\'ona, A. Burstein, Permutations with exactly one copy of a decreasing pattern of length $k$, \emph{Ann. Combin.}, to appear, preprint at \href{https://arxiv.org/abs/2101.00332}{arxiv:2101.00332}.

\bibitem{Bona} M. B\'ona, \emph{Combinatorics of Permutations}. Chapman \& Hall/CRC, Boca Raton, FL, 2004.



\bibitem{Burstein_Short} A. Burstein, A Short Proof for the Number of Permutations Containing Pattern 321 Exactly Once, \emph{Electron. J. Combin.} \textbf{18} (2011).



\bibitem{Burstein_Restricted} A. Burstein, Restricted Dumont permutations, \emph{Ann. Combin.} \textbf{9} (2005), 269-280.

\bibitem{Burstein_Dyck} A. Burstein, S. Elizalde, T. Mansour, Restricted Dumont permutations, Dyck paths and noncrossing partitions, \emph{Discrete Math.} \textbf{306} (2006), no. 22, 2851-2869. Extended abstract in \emph{Proceedings of FPSAC 2006}, June 2006, San Diego, CA.

\bibitem{Burstein_Jones} A. Burstein, O. Jones, Restricted Dumont Permutations, talk, Special Session on Enumerative Combinatorics, AMS Central Section Meeting, University of St. Thomas, Minneapolis, MN, October 2016.

\bibitem{Burstein_Ofodile} A. Burstein, C. Ofodile, Dumont permutations containing one occurrence of certain three and four letter patterns, talk, Permutation Patterns 2011, California Polytechnic University, San Luis Obispo, CA, June 2011.

\bibitem {Burstein_New} A. Burstein, M. Josuat-Verg\`es, W. Stromquist, New Dumont permutations, \emph{Pure Math. Appl. (Pu.M.A.)} \textbf{21} (2010), no. 2, 177-206.

\bibitem{Burstein_Third} A. Burstein, W. Stromquist, Dumont permutations of the third kind, extended abstract, in \emph{Proceedings of FPSAC 2007}, July 2007, Tianjin, China.

\bibitem {Dumont} D. Dumont, Interpr\'etations combinatoires des nombres de Genocchi, \emph{Duke J. Math.} \textbf {41} (1974), 305-318.

\bibitem {Dumont_Foata} D. Dumont, D. Foata, Une propri\'et\'e de sym\'etrie des nombres de Genocchi, \emph{Bul. Soc. Math. France} \textbf{104} (1976), no. 4, 433-451.

\bibitem{Foata} Dominique Foata, On the Netto inversion number of a sequence, \emph{Proc. Amer. Math. Soc.} \textbf {19} (1968), 236-240.


\bibitem {Jones} O. Jones, \emph{Enumerations of Dumont permutations avoiding certain four-letter patterns}, Ph.D. thesis, Howard University, 2019.

\bibitem {Kitaev_Remmel_1} S. Kitaev, J. Remmel, Classifying descents according to equivalence mod $k$, \emph{Electron. J. Combin.} \textbf{13(1)} (2006), \#R64.

\bibitem {Kitaev_Remmel_2} S. Kitaev, J. Remmel, Classifying descents according to parity, \emph{Ann. Combin.} \textbf{11} (2007), 173-193.

\bibitem{Krattenthaler} C. Krattenthaler, Permutations with restricted patterns and Dyck paths, \emph{Adv. Appl. Math.} \textbf{27} (2001), 510-530.

\bibitem {Knuth} D. E. Knuth, \emph{The Art of Computer Programming}, vol. 3. Addison-Wesley Publishing, MA 1973.


\bibitem {Mansour} T. Mansour, Restricted 132-Dumont permutations, \emph{Australasian J. Combin.} \textbf{29} (2004), 103-117.



\bibitem {Noonan} J. Noonan, The number of permutations containing exactly one increasing subsequence of length three, \emph{Discrete Math.} \textbf{152} (1996), no. 1-3, 307-313.

\bibitem{OEIS} OEIS Foundation Inc. (2021), \emph{The On-Line Encyclopedia of Integer Sequences}, \href{http://oeis.org}{http://oeis.org}.

\bibitem {Ofodile} C. Ofodile, \emph{The enumeration of Dumont permutations with few occurrences of three- and four-letter patterns}, Ph.D. thesis, Howard University, 2011.

\bibitem {Noonan_Zeilberger} J. Noonan, D. Zeilberger, The enumeration of permutations with a prescribed number of ``forbidden" patterns, \emph{Adv. Appl. Math.} \textbf{17} (1996), no. 4, 381-407.

\bibitem {Simion} R. Simion, F. W. Schmidt, Restricted permutations,  \emph{Europ. J. Combin.} \textbf {6} (1985), 383-406.



\bibitem{RWZ} A. Robertson, H.S. Wilf, D. Zeilberger, Permutation patterns and continued fractions, \emph{Electron. J. Combin.} \textbf{6} (1999), \#R38, 6 pp.

\bibitem{Vatter} V. Vatter, ``Permutation Classes'', in \emph{Handbook of Enumerative Combinatorics}, M. B\'ona, ed., Chapman and Hall/CRC, 2015. Available online at \href{https://arxiv.org/abs/1409.5159}{arxiv:1409.5159}.


\bibitem {Zeilberger_lovely} D. Zeilberger, Alexander Burstein's lovely combinatorial proof of John Noonan's beautiful theorem that the number of $n$-permutations that contain the pattern $321$ exactly once equals $(3/n)(2n)!/((n-3)!(n+3)!)$, \emph{Pure Math. Appl.} \textbf{22} (2011), no. 2, 297-298.

\end{thebibliography}
\end{document}